\documentclass[12pt,dvipsnames,reqno]{amsart}
\usepackage{amsmath, amsthm, amscd, amsfonts, amssymb, mathrsfs, graphicx, mathabx, bbm, tikz, caption, enumitem, mathtools, comment, pgfplots, multicol}
    \pgfplotsset{compat=1.16}
    \usepgfplotslibrary{fillbetween}
    \makeatletter
    \def\l@subsection{\@tocline{2}{0pt}{2.9pc}{5pc}{}}
    \def\l@subsubsection{\@tocline{2}{0pt}{5pc}{7.5pc}{}}
    \makeatother
\usepackage[margin=2.5cm]{geometry}
    \setlist[enumerate,1]{ labelindent=3pt, leftmargin=*, labelsep=0.5em }
    
    \allowdisplaybreaks
\usepackage[alphabetic]{amsrefs}
\usepackage[bookmarksnumbered, colorlinks, plainpages]{hyperref}
    \hypersetup{colorlinks=true, linkcolor=MidnightBlue, citecolor=BrickRed, filecolor=MidnightBlue, urlcolor=MidnightBlue}
    \mathtoolsset{showonlyrefs,showmanualtags} 
    
\numberwithin{equation}{section}
\newtheorem{mainthm}{Theorem}
    
\newtheorem{theorem}[equation]{Theorem}
\newtheorem{lemma}[equation]{Lemma}
\newtheorem{proposition}[equation]{Proposition}
\newtheorem{corollary}[equation]{Corollary}
\theoremstyle{definition}
\newtheorem{definition}[equation]{Definition}

\newtheorem{question}[equation]{Question}

\theoremstyle{remark}

\newcommand{\R}{\mathbb{R}}
\newcommand{\N}{\mathcal{N}}

\newcommand{\Z}{\mathbb Z}

\newcommand{\one}{\mathbbm{1}}

\DeclareMathOperator{\lcm}{lcm}

\newcommand{\E}{ \mathop{ \mathchoice {\vcenter{\hbox{\LARGE $\mathbb E$}}} {\mathbb E} {\mathbb E} {\mathbb E} }\displaylimits }

\begin{document}
\title{Ergodic Averages over Shrinking Sectors of $\mathbb Z[i]$}
\subjclass[2020]{Primary 37A44; Secondary 37A30, 11N37, 11R44}
\keywords{ergodic averages, Gaussian integers, shrinking sectors, prime-factor counting function, multiplicative dynamical systems, unique ergodicity}
\begin{abstract}
We study ergodic averages over Gaussian integers in shrinking angular sectors, which impose an increasingly restrictive directional relation between the real and imaginary coordinates. In uniquely ergodic systems, averages of continuous functions along $\Omega$ converge to the invariant integral uniformly in the base point and sector location for sectors whose widths shrink subpolynomially in the norm cutoff.

For finitely generated multiplicative actions, we prove a quantitative sector--disk comparison in an explicit logarithmic shrinking range, with a power-saving error in $\log N$, uniformly in the base point and sector location. The exponents depend only on the number of distinct prime transformations, and the result requires neither ergodicity nor angular-distribution hypotheses on the sets of primes inducing the individual transformations. Strong unique ergodicity then yields convergence to the invariant integral at each fixed base point.

Consequences include asymptotic independence of relative angular position and $T^{\Omega(n)} x$, Gaussian Liouville cancellation, and joint equidistribution modulo integers of prime-factor counts associated with two prime classes having divergent reciprocal-norm sums.
\end{abstract}

\author[A. Burgin]{Alex Burgin}
\address{School of Mathematics, Georgia Institute of Technology, Atlanta GA 30332, USA}
\email{aburgin6@gatech.edu}
\thanks{A.B. is supported in part by the Simons Foundation International through a Simons Dissertation Fellowship [SFI-MPS-SDF-00026777], the U.S. Department of Education through a GAANN fellowship, and the National Science Foundation [DMS-2247254]}

\author[C. Giannitsi]{Christina Giannitsi}
\address{Department of Mathematics, Virginia Tech, Blacksburg, VA, 24060, USA}
\email{cgiannitsi@vt.edu}

\maketitle
\tableofcontents

\section{Introduction}
We study whether arithmetic ergodic averages over the Gaussian integers retain their limiting behavior under restriction to shrinking angular sectors. Geometrically, such restrictions impose an increasingly strong coupling between the real and imaginary coordinates: while the radial scale tends to infinity, the admissible lattice points are confined to an increasingly narrow range of directions. For finitely generated, strongly uniquely ergodic multiplicative actions, Donoso, Le, Moreira, and Sun \cite{DLMS2024}*{Theorem E} established convergence along dilates of a fixed planar region, including disks and fixed sectors. We allow the angular width to decrease with the norm cutoff, so that the averaging regions no longer arise as dilates of a fixed region.

The essential difficulty is the shrinking width, rather than the changing initial angle. When the width stays bounded away from zero, fixed-sector convergence can be transferred to moving sectors by a finite angular approximation. Shrinking sectors, however, occupy a vanishing proportion of the disk: an error negligible on the disk scale may therefore be significant after normalization by the sector cardinality. In particular, the fixed-region convergence theorem of Donoso, Le, Moreira, and Sun does not apply directly to a family of sectors whose widths shrink with $N$, since the underlying planar region is then itself changing with $N$. Moreover, because a sector of width $\gamma$ contains only a proportion comparable to $\gamma$ of the disk, estimates that are negligible on the disk scale need not remain negligible after normalization by the sector cardinality. Thus the problem requires control at the shrinking angular scale itself.

Our main results have complementary scopes. For $\Omega$-averages in uniquely ergodic systems, we obtain uniform convergence throughout the subpolynomial shrinking range. For arbitrary finitely generated multiplicative actions, we instead prove a quantitative sector-disk comparison in a logarithmic shrinking range, requiring neither ergodicity nor an angular-distribution hypothesis on the sets of primes inducing the individual transformations. As a consequence of the first result, in the same subpolynomial shrinking range, the relative angular position within the sector and the dynamical variable $T^{\Omega(n)} x$ are asymptotically independent.

We write $\Omega(n)$ for the number of Gaussian prime factors of $n$, counted with multiplicity, and $\mathcal{N}(n) := |n|^2$. Let $K_{s,\gamma}(N)$ denote the nonzero Gaussian integers of norm at most $N$ whose arguments lie in $[s, s + \gamma)$, interpreted modulo $2\pi$. We use $\E$ for uniform averaging. Further notation and conventions are collected at the end of the introduction.

\begin{mainthm}\label{thm:additivesec}
Let $X$ be a compact metric space, let $T : X \rightarrow X$ be continuous, and suppose that $\mu$ is the unique $T$-invariant Borel probability measure on $X$. Let $(\gamma_N)_{N \geq 3}$ be a sequence of positive real numbers satisfying $\gamma_N \rightarrow 0$ and $\gamma_N^{-1} = N^{o(1)}$ as $N \rightarrow \infty$. Then, for every $f \in C(X)$,
\begin{align}
\sup_{\gamma_N \leq \gamma \leq 2\pi} 
\sup_{s \in [0, 2\pi)} 
\sup_{x \in X} 
\left|\E_{n \in K_{s,\gamma}(N)} f(T^{\Omega(n)} x) 
- \int_X f \, d\mu\right|
& \longrightarrow 0,
\qquad N \rightarrow \infty.
\end{align}
\end{mainthm}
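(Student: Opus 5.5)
Our plan is to reduce to the distribution of $\Omega(n)$ modulo integers, and then to a statement about exponential sums. By unique ergodicity, the spectral-type approach of Bergelson--Richter goes as follows. For uniquely ergodic $(X,T)$ it suffices to show two things.

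First, $\E_{n\in K_{s,\gamma}(N)} g(\Omega(n))$ is asymptotically shift-invariant: replacing $\Omega(n)$ by $\Omega(n)+1$ changes the average by $o(1)$, uniformly in $s$, $\gamma\ge\gamma_N$ and bounded $g$. Once we have this, the empirical measures $\E_{n} \delta_{T^{\Omega(n)}x}$ have all weak-$*$ limit points $T$-invariant, hence equal to $\mu$. A compactness argument over $(x,s,\gamma)$, arguing by contradiction with sequences $x_N, s_N, \gamma_N'$, then gives the uniform statement.

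So the core is the identity
\[
\E_{n\in K_{s,\gamma}(N)} g(\Omega(n)) - \E_{n\in K_{s,\gamma}(N)} g(\Omega(n)+1) \to 0 .
\]
We would prove it via the Bergelson--Richter trick. Write $n=pm$ with $p$ a Gaussian prime in a set $P$ of primes with norms in a dyadic-type range $[H,H^{C}]$, with $H$ slowly growing. A Turán--Kubilius type estimate over sectors gives
\[
\E_n g(\Omega(n)) \approx \frac{1}{W}\E_n \sum_{p\in P,\,p\mid n} w_p\, g(\Omega(n)).
\]
Here $W=\sum_{p\in P} w_p$, and the approximation holds uniformly for sectors of width $\ge N^{-o(1)}$. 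This requires counting multiples of $p$ in the sector, which amounts to lattice points of $p\Z[i]$ in a sector of area $\gamma N/(2\mathcal N(p))$, with error $O(\sqrt{N/\mathcal N(p)})$. That error is acceptable since $\gamma\ge N^{-o(1)}$ and $\mathcal N(p)\le N^{o(1)}$.

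Swapping sums, we get
\[
\sum_p w_p \sum_{m:\,pm\in K} g(\Omega(m)+1).
\]
Now $pm\in K_{s,\gamma}(N)$ iff $m\in K_{s-\arg p,\gamma}(N/\mathcal N(p))$. We compare this with a parallel weighted sum over products $qm$ with $q$ from a second prime set $Q$ and the same $m$. The two bilinear forms can be matched by the Bergelson--Richter choice of primes $P,Q$ with matching norm distributions. Here this must be done inside a fixed angular window: we need Hecke's equidistribution of Gaussian primes in sectors, jointly in norm and argument, at scale $\gamma$. We get this by partitioning into small arcs of width $\ll\gamma/L$ and small norm ranges, and using the prime number theorem for Hecke characters to get matching counts in each cell. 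The result is that $\E g(\Omega(n))$ is close to $\E g(\Omega(n)+1)$ up to $o(1)$ plus an error controlled by Cauchy--Schwarz. The Cauchy--Schwarz error involves $\sum_{p\ne q}$ correlation terms; these are again lattice counts in sectors for $\lcm$ lattices, and are uniform in $s$.

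The main obstacle we expect is this last comparison step. Decomposing by the angle of $p$ shifts the sector of $m$ by $\arg p$, so the $m$-sector depends on $p$. We must therefore group primes by argument into arcs much finer than $\gamma$, while keeping enough primes per cell that the variance bound still yields $W^{-1/2}\to0$. This forces $H\to\infty$ slowly compared with $\gamma_N^{-1}$. The subpolynomial hypothesis $\gamma_N^{-1}=N^{o(1)}$ is exactly what allows such choices with all lattice-counting errors $o(1)$. We would first try $H=\exp(\sqrt{\log(1/\gamma_N)+\log\log N})$-type scales and verify the error bounds.
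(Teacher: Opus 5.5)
Your outer reduction (uniform shift invariance against all bounded test sequences, then weak-$*$ limits, unique ergodicity, and a contradiction argument over $(x,s,\gamma)$) matches the paper, and so does your sectorial Tur\'an--Kubilius step. The comparison step, however, cannot work as you describe it. If $Q$ is a second set of \emph{primes}, then $\Omega(qm)=\Omega(m)+1=\Omega(pm)$. Both bilinear forms therefore express $\E g(\Omega(n))$ through $g(\Omega(m)+1)$, and matching them gives no relation between $\E g(\Omega(n))$ and $\E g(\Omega(n)+1)$.

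The Bergelson--Richter mechanism instead matches primes with \emph{semiprimes}. The paper applies Proposition \ref{prop:TuranKubelius} twice:
\begin{itemize}
\item to $b(\Omega(n)+1)$, using prime divisors $m\in S_{1,N}$;
\item to $b(\Omega(n))$, using semiprime divisors $m'\in S_{2,N}\subset\mathcal{P}_{\Z[i],2}$.
\end{itemize}
Both sides then become logarithmic averages of $b(\Omega(\ell)+2)$ over the regions $(K_{s,\gamma})_{[m]}$ truncated at norm $N/\mathcal{N}(m)$. These regions nearly coincide when $m$ and $m'$ lie in the same fine annular-sector cell (Lemma \ref{lem:matchingavg}).

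This step needs two ingredients that your plan lacks. First, the semiprime set $P_{1,N}\cdot P_{2,N}$ needs its own gcd-correlation bound (Lemma \ref{lem2.5}). Second, unthinned semiprimes in a cell can outnumber the primes there by a factor comparable to the reciprocal mass of $P_{1,N}$. The paper therefore thins $P_{2,N}$ locally, keeping a fixed fraction $\eta$ per cell and capping the mass of $P_{1,N}$ at $L+1$. This ensures every cell contains at least as many primes as semiprimes, while both reciprocal masses stay large.

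The arithmetic input is also insufficient. The prime number theorem for Hecke characters cannot supply cell counts at angular scale $\ll\gamma_N$ over the whole subpolynomial range. Zero-free-region arguments for the Hecke $L$-functions of the angular characters give equidistribution of primes of norm about $x$ only in sectors of width at least $\exp(-(\log x)^{\kappa})$, for some fixed $\kappa<1$ ($\kappa=1/2$ from the classical region). Take the admissible choice $\gamma_N=\exp(-\log N/\log\log N)$. Then you would need $\log x\geq(\log N/\log\log N)^{1/\kappa}$, that is, primes of norm exceeding $N$.

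Your suggested scale fails even more directly when $C$ is bounded. With norms in $[H,H^C]$ and $H=\exp(\sqrt{\log(1/\gamma_N)+\log\log N})$, this $\gamma_N$ gives only $\gamma_N^{-o(1)}$ primes. That is far fewer than the $\gg\gamma_N^{-1}$ angular cells, so most cells are empty. Letting $C$ grow just returns you to the previous obstruction.

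The paper handles this with two choices:
\begin{itemize}
\item It uses Stucky's estimate (Theorem \ref{thm:stucky}), valid for sector width $\delta$ and norm-interval length $y$ with $\delta y\geq x^{7/10+\zeta}$, and hence for polynomially narrow cells.
\item It lets divisor norms reach $R_N=N^{1/4+o(1)}$ rather than $N^{o(1)}$. This is affordable because the lattice-counting errors in Proposition \ref{prop:TuranKubelius} are controlled by $R^{3/2}/(\gamma N^{1/2})$.
\end{itemize}
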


Theorem \ref{thm:additivesec} relies on the following uniform arithmetic shift invariance theorem. For a finite nonempty set $E \subset \Z[i] \setminus \{0\}$ and $k \in \mathbb{N}_0$, write $p(E, k) := |\{n \in E : \Omega(n) = k\}|/|E|$ for the proportion of elements having exactly $k$ prime factors, counted with multiplicity.

\begin{mainthm}\label{thm:omegaavg}
Let $(\gamma_N)_{N \geq 3}$ be a sequence of positive real numbers satisfying $\gamma_N \rightarrow 0$ and $\gamma_N^{-1} = N^{o(1)}$ as $N \rightarrow \infty$. Then
\begin{align}
\sup_{\gamma_N \leq \gamma \leq 2\pi} \sup_{s \in [0, 2\pi)} \sum_{k \geq 1} \left|p(K_{s,\gamma}(N), k) - p(K_{s,\gamma}(N), k + 1)\right|
& \longrightarrow 0,
\qquad 
N \rightarrow \infty.
\end{align}
\end{mainthm}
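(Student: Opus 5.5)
We prove Theorem \ref{thm:omegaavg} with the \emph{Sathe--Selberg method}, adapted to Hecke characters so that it sees angular position. Given the distribution of $\Omega$ on $K_{s,\gamma}(N)$, we control the differences $p(K,k)-p(K,k+1)$ through the local behaviour in $k$ of a Poisson-type law.

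\textbf{Step 1: generating function.} For complex $z$ with $|z|\le 2-\delta$ (staying away from the prime $1+i$ of norm $2$), consider
\[
F_{s,\gamma}(z;N)=\sum_{n\in K_{s,\gamma}(N)} z^{\Omega(n)}.
\]
Smooth the sector indicator in the angle at scale $\eta\gamma$ and expand it in the angular characters $e^{4im\arg n}$. We only need $|m|\le M\approx(\eta\gamma)^{-1}N^{o(1)}$; since $\gamma^{-1}=N^{o(1)}$, this is $M=N^{o(1)}$.

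\textbf{Step 2: Selberg--Delange estimates.} For each $m$, the Dirichlet series $\sum_n z^{\Omega(n)}\xi_m(n)\mathcal N(n)^{-w}$ factors as $L(w,\xi_m)^z G_m(z,w)$, where $\xi_m$ is the Hecke Grössencharacter and $G_m$ is holomorphic and bounded in $\Re w>1/2+\epsilon$. For $m=0$, the Selberg--Delange method gives the main term $c(z)N(\log N)^{z-1}\cdot\frac{\gamma}{2\pi}$. For $m\ne0$, we need a bound $o\bigl(N(\log N)^{\Re z-1}\bigr)$ that is uniform in $|m|\le N^{o(1)}$. We get it from a classical zero-free region for Hecke $L$-functions, of width $\gg 1/\log(|m|+|t|+2)$, and a Perron/contour argument. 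With $|m|=N^{o(1)}$, the zero-free region loses only $\log|m|=o(\log N)$, which still leaves a saving of size $\exp(-c\sqrt{\log N})$ or at least $(\log N)^{-A}$. This step is the main obstacle. The bounds must be uniform simultaneously in $m$, $z$, $s$ and $\gamma$, and the summed Fourier error, which is of size $M\cdot\max_m$, must stay below the $\gamma$ normalization. I would first try Hadamard--de la Vallée Poussin zero-free regions for Hecke $L$-functions; the subpolynomial hypothesis makes these adequate, since $\log M=o(\log N)$. The smoothing error contributes $O(\eta)$ relative to the main term; let $\eta\to0$ slowly.

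\textbf{Step 3: extracting the differences.} Step 2 gives, uniformly,
\[
F_{s,\gamma}(z;N)=|K_{s,\gamma}(N)|\,\frac{c(z)}{c(1)}(\log N)^{z-1}\bigl(1+o(1)\bigr)
\]
on a disk $|z|\le r$ with $1<r<2$. Write $\lambda=\log\log N$. The function $(1-z)F$ has coefficients $p(K,k)-p(K,k-1)$. By Cauchy's formula on $|z|=1$, in the Sathe--Selberg style, $p(K,k)$ equals the Poisson weight $e^{-\lambda}\lambda^{k-1}/(k-1)!$ times a smooth factor $c(k/\lambda)/c(1)$, up to an error $o\bigl(\lambda^{-1/2}\bigr)$ uniformly for $k\le r\lambda$.

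The Poisson weights have total variation of consecutive differences $O(\lambda^{-1/2})\to0$, and the smooth factor varies by $O(1/\lambda)$ per step. So the sum of differences over $k\le(1+\epsilon)\lambda$ tends to zero. The range $|k-\lambda|>\lambda^{1/2}\omega$ carries mass $o(1)$ by a Chebyshev bound. This bound follows from the second-moment estimate obtained by differentiating $F$ at $z=1$, so the tails also contribute $o(1)$. Taking suprema over $s$ and over $\gamma\ge\gamma_N$ is harmless, because every estimate above depends on $\gamma$ only through $\gamma^{-1}\le\gamma_N^{-1}=N^{o(1)}$.
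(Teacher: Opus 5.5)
\textbf{The gap is in Step 2.} The uniformity it claims does not follow from the tools you name once $\gamma$ is allowed to be as small as $N^{-o(1)}$.

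\emph{What Step 2 requires.} To isolate a sector of width $\gamma$ you need frequencies $|m|$ up to about $\gamma^{-1}$, and the sector main term carries a factor $\gamma$. So each character sum $E_m:=\bigl|\sum_{\mathcal{N}(n)\le N} z^{\Omega(n)}e^{4im\arg n}\bigr|$ must save a factor of order $\gamma$, or roughly $1/|m|$ frequency by frequency, relative to $N(\log N)^{\Re z-1}$. You say this yourself when you note that the summed error must stay below the $\gamma$ normalization.

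\emph{What the zero-free region gives.} A region of width $c/\log(|m|+|t|+2)$, fed into Perron's formula and a contour shift, yields at best $E_m\ll N(\log N)^{O(1)}\exp\bigl(-c\log N/\log(|m|+2)\bigr)$. Beating $1/|m|$ with this forces $(\log|m|)^2\lesssim\log N$. The method therefore reaches only $\gamma\ge\exp(-C\sqrt{\log N})$.

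\emph{A concrete failure.} Take $\gamma_N=\exp(-(\log N)^{3/4})$, which satisfies $\gamma_N^{-1}=N^{o(1)}$. At $|m|\approx\gamma_N^{-1}$ the saving is only $\exp(-c(\log N)^{1/4})$, far short of the required factor $\gamma_N$. Your sentence that $\log M=o(\log N)$ ``still leaves a saving of size $\exp(-c\sqrt{\log N})$ or at least $(\log N)^{-A}$'' conflates having some saving with having a saving of size $\gamma$. Neither of those savings survives normalization by $\gamma=N^{-o(1)}$.

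\emph{Why standard remedies do not rescue it.} A Vinogradov--Korobov-type region in the $m$-aspect would only push the range to about $\exp(-(\log N)^{3/5-\epsilon})$. Zero-density estimates, which are what make narrow-sector prime counts such as Theorem~\ref{thm:stucky} work, cannot substitute. The Selberg--Delange contour needs $L(w,\xi_m)^z$ to be holomorphic, and that fails at every zero. This $\gamma^{-1}$ loss is exactly what confines the paper's own Fourier/Hal\'asz argument for Theorem~\ref{thm:sector-disk} to widths $(\log N)^{-A_d}$, as the concluding remarks point out. Completed with the classical region, your argument would give the theorem only for $\gamma\ge\exp(-C\sqrt{\log N})$.

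\textbf{How the paper avoids angular Fourier analysis.} For $\Omega$ the paper proves $\ell^1$ shift invariance by duality: it shows $\E_{n}b(\Omega(n)+1)\approx\E_{n}b(\Omega(n))$ uniformly over $|b|\le 1$. Two ingredients drive this.
\begin{enumerate}
\item The sectorial Tur\'an--Kubilius inequality (Proposition~\ref{prop:TuranKubelius}) replaces the sector average by averages conditioned on divisibility.
\item A prime--semiprime matching inside angular--radial cells of width $\lambda_N=\gamma_N e^{-\sqrt{\log N}}$ (Lemma~\ref{lem:prime-semiprime}) compares those conditioned averages.
\end{enumerate}
The only analytic input is Stucky's prime count in these cells. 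It is applied at norms $x$ with $\log x$ much larger than $\log(1/\gamma_N)$, so the cells are only $x^{-o(1)}$-narrow and no $\gamma^{-1}$ loss ever appears.

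\textbf{A fixable point in Step 3.} Differentiating an asymptotic with unquantified relative error $o(1)$ at $z=1$ controls $\E\,\Omega(\Omega-1)$ only up to $o(\lambda^2)$. That does not give a variance $O(\lambda)$, so the Chebyshev tail bound does not follow. You would instead use Chernoff bounds from $F(r)$ at $r=1\pm\omega\lambda^{-1/2}$, with $\omega\to\infty$ chosen in terms of the $o(1)$ rate. This repair works, unlike Step 2.
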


Up to the negligible contribution of the units, Theorem \ref{thm:omegaavg} gives asymptotic $\ell^1$-invariance of the distribution of $\Omega$ under translation by $1$, uniformly over the stated sectors; by iteration, the same holds for every fixed integer shift. This information is not supplied by normal-order or central-limit estimates alone and is precisely what yields invariance of the dynamical weak-$*$ limits.

To formulate the multiplicative results, write $\mathcal{P}_{\Z[i]}$ for the set of all Gaussian prime elements, including all associates. For $N \geq 1$, let $B_N$ denote the nonzero Gaussian integers of norm at most $N$. We first introduce the notions of finite generation, pretended invariance, and strong unique ergodicity.
\begin{definition}
\begin{enumerate}[label=(\alph*)]
\item A multiplicative topological dynamical system is a pair $(Y, S)$, where $Y$ is a compact metric space and $S = (S_n)_{n \in \Z[i] \setminus \{0\}}$ is an action of $(\Z[i] \setminus \{0\}, \cdot)$ by continuous maps on $Y$. Thus $S_1 = \operatorname{id}_Y$ and $S_{n m} = S_n \circ S_m$ for all $m, n \in \Z[i] \setminus \{0\}$.

\item Let $(Y, S)$ be a multiplicative topological dynamical system. The action $S$ on $Y$ is called finitely generated if the set $\{S_p : p \in \mathcal{P}_{\Z[i]}\}$ is finite. Its distinct elements, denoted by $R_1, \ldots, R_d$, are called the generators of $S$.

\item Let $(Y, S)$ be a multiplicative topological dynamical system, and let $\nu$ be a Borel probability measure on $Y$. We say that $\nu$ pretends to be invariant under $S$ if there exists a set $P \subset \mathcal{P}_{\Z[i]}$ such that $\sum_{p \in \mathcal{P}_{\Z[i]} \setminus P} \mathcal{N}(p)^{-1} < \infty$ and $\nu$ is invariant under $S_p$ for every $p \in P$.

\item We call $(Y, S)$ strongly uniquely ergodic if there exists exactly one Borel probability measure on $Y$ that pretends to be invariant under $S$.
\end{enumerate}
\end{definition}

Below we quantify the robustness of multiplicative averages under the increasingly strong directional localization imposed by shrinking sectors. The comparison theorem itself requires only finite generation, with no ergodicity assumption. More precisely, we establish a quantitative total variation comparison between the sector and disk distributions of the prime-factor data determining $S_n$. This yields the following sector-disk estimate, uniform over systems with a bounded number of prime transformations and requiring neither an angular distribution hypothesis on the sets of primes inducing the individual transformations nor convergence of the disk averages.
We use the convention $\operatorname{sinc}(t) := \sin(t)/t$..

\begin{mainthm} \label{thm:sector-disk}
For every integer $d \geq 1$, set $A_d := \left(1 - \operatorname{sinc}\left(\pi/4d\right)\right) /128$ and $b_d := A_d/(2d + 2)$.
There exist constants $C_d > 0$ and $N_d \geq 3$, depending only on $d$, with the following property. Let $(Y, S)$ be a finitely generated multiplicative topological dynamical system satisfying
\begin{align}
\#\{S_p : p \in \mathcal{P}_{\Z[i]}\}
& \leq d.
\end{align}
Then, for every $g \in C(Y)$ and every $N \geq N_d$,
\begin{align}
\sup_{\substack{(\log N)^{-A_d} \leq \gamma \leq 2\pi \\ s \in [0, 2\pi) \\ y \in Y}} \left|\E_{n \in K_{s,\gamma}(N)} g(S_n y) - \E_{n \in B_N} g(S_n y)\right|
& \leq C_d \|g\|_\infty (\log N)^{-b_d}.
\end{align}
\end{mainthm}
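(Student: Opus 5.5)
Partition the Gaussian primes into classes $P_j := \{p \in \mathcal{P}_{\Z[i]} : S_p = R_j\}$ for $j=1,\dots,d$. Since the action is multiplicative, $S_n = R_1^{\Omega_1(n)}\circ\cdots\circ R_d^{\Omega_d(n)}$ up to the finitely many unit transformations, where $\Omega_j(n)$ counts prime factors of $n$ in $P_j$ with multiplicity. The units contribute a bounded factor that I will absorb by splitting each average according to the unit class, or by first replacing $S$ by $S_u$ for the four units $u$. Then $g(S_n y) = G(\Omega_1(n),\dots,\Omega_d(n))$ for a function $G$ on $\mathbb N_0^d$ with $\|G\|_\infty \le \|g\|_\infty$. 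The theorem follows from a total variation bound
\begin{align}
\sum_{k \in \mathbb N_0^d} \bigl| \mathbb P_{n\in K_{s,\gamma}(N)}(\vec\Omega(n)=k) - \mathbb P_{n\in B_N}(\vec\Omega(n)=k)\bigr| \ll_d (\log N)^{-b_d},
\end{align}
uniformly in $s$, in $\gamma \ge (\log N)^{-A_d}$, and in the partition $(P_j)$. This bound uses no information about $Y$ or $y$.

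To prove the bound, I will use generating functions and Hecke characters. For $z \in \mathbb C^d$ with $|z_j|\le 1$ (eventually $z_j = e(\theta_j)$ on the torus), form $F_{s,\gamma}(z) = \sum_{n\in K_{s,\gamma}(N)} \prod_j z_j^{\Omega_j(n)}$. I will detect the sector with a smoothed indicator of $[s,s+\gamma)$ expanded in Hecke angular characters $\lambda^m(n) = (n/|n|)^{4m}$. The harmonics must be truncated at $|m| \le M \approx \gamma^{-1}(\log N)^{\eta}$, and the smoothing error on the sector boundary is $O(\gamma^{-1}M^{-1})$ relative to the sector size. The $m=0$ term reproduces the disk average scaled by $\gamma/2\pi$. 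For $m\neq 0$ I need a Halász-type (or Selberg–Delange) bound for the multiplicative function $n\mapsto \prod_j z_j^{\Omega_j(n)}\lambda^m(n)$, uniform in the unknown partition. The key input is this: the function $h(p) = z_{j(p)}\lambda^m(p)$ has modulus one, and its distance to every pretentious character $\lambda^{m'}\mathcal N^{it}$ is large whenever $m\ne 0$. The reason is that $\lambda^m(p)$ equidistributes on the circle along primes, while $z_{j(p)}$ takes only $d$ values. On each of $d$ arcs of the circle one can therefore realize at most a cosine-average gain. The quantity $1-\operatorname{sinc}(\pi/4d)$ should come out precisely as the minimal value of $1-\mathrm{Re}$ average of $h\,\overline{\text{char}}$ over primes when the best constant is chosen on each of $d$ arcs of length $\pi/(2d)$, relative to the $\lambda$-normalized angle. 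Consequently $\sum_{\mathcal N(p)\le N} (1-\mathrm{Re}\,h(p)\bar\chi(p))/\mathcal N(p) \ge c_d \log\log N - O(\log |m|)$. Halász then gives the saving $(\log N)^{-c A}$ for each $m$ up to size $(\log N)^{O(1)}$, where the factor $1/128$ absorbs the Halász constant and the loss from summing over $M$ harmonics. This is where the restriction $\gamma \ge (\log N)^{-A_d}$ is needed.

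Passing from the generating function to total variation will be done by Fourier inversion on $\mathbb T^d$. I will restrict to $|k_j - \mathbb E\,\Omega_j| \le (\log N)^{1/2+\epsilon}$, since large deviations of $\Omega_j$ are negligible by a Turán–Kubilius bound, and bound the $\ell^1$ difference by $\ell^2$ via Cauchy–Schwarz on the box of size $\prod_j (\log N)^{1/2+\epsilon}$. Parseval converts the $\ell^2$ norm into the $L^2(\mathbb T^d)$ norm of $\gamma^{-1}F_{s,\gamma}/|B_N| - F_{B}/|B_N|$. The box loses a factor roughly $(\log N)^{d/2}$ against the pointwise saving $(\log N)^{-A_d}$. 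Rebalancing these losses is what produces the exponent $b_d = A_d/(2d+2)$: one smooths in $k$ at scale $(\log N)^{\delta}$ and optimizes $\delta$ across the $d+1$ competing errors. Uniformity in $s$ and $y$ is automatic, since only $|\widehat{\text{indicator}}(m)|$ enters.

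The main obstacle is the uniform Halász bound. It must hold simultaneously for all partitions of the primes into $d$ classes and all twists $z\in\mathbb T^d$, $1\le|m|\le M$. The difficulty is pinning down the pretentious distance lower bound with the explicit constant $1-\operatorname{sinc}(\pi/4d)$, while keeping the $t$-aspect (Archimedean twists $\mathcal N^{it}$) and the $m$-dependence under control.
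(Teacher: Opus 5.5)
There is a genuine gap at the units, and it carries over into your harmonic analysis. Your $\Omega_j$ is only well defined after fixing one representative per associate class, since the classes $P_j$ need not be unit-invariant. With representatives $\mathcal{P}^*$ fixed, $S_n = S_{i^{r(n)}}\circ R_1^{\Omega_1(n)}\circ\cdots\circ R_{d}^{\Omega_{d}(n)}$, and $r(n)\in\Z/4\Z$ is not a harmless bounded factor. For example, $Y=\Z/4\Z$ with $S_n y := y+r(n)$ is an admissible action with four prime transformations. Here $\vec\Omega$ carries no information about $r(n)$, and the theorem for this system is exactly equidistribution of $r(n)$ in narrow sectors. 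So you need the joint law of $(r(n),\vec\Omega(n))$, i.e.\ the twists $i^{\ell r(n)}\prod_j z_j^{\Omega_j(n)}$. The function $i^{\ell r(n)}\prod_j z_j^{\Omega_j(n)}e^{ih\arg n}$ is invariant under $n\mapsto in$ only when $h\equiv-\ell \pmod 4$, and otherwise its disk average is $0$. For $\ell\neq 0$ the relevant frequencies are therefore exactly the $h\notin 4\Z$ that your Hecke characters $\lambda^m=(n/|n|)^{4m}$ discard. Relatedly, a $\pi/2$-periodic function of $\arg n$ cannot approximate the indicator of an element sector of width $<\pi/2$, only the union of its four rotated copies. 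The paper uses all frequencies $1\le|h|\le(\log N)^2$ (a Fej\'er kernel of that degree) together with $a_{\mathbf z,\ell}$, which takes at most $4d$ values on primes. It passes to ideals only when $a(i)i^h=1$ (Lemma \ref{lem:halaszbd}). This is the actual source of $1-\operatorname{sinc}(\pi/4d)$: Lemma \ref{lem:phasesep} with $D=4d$ phases. Your ``$d$ arcs of length $\pi/(2d)$'' heuristic does not produce it; $d$ phases against an equidistributed angle would give $1-\operatorname{sinc}(\pi/d)$.

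Second, the uniform distance bound you call the main obstacle is the core lemma, and you leave it open. Only distance to $\mathcal{N}^{it}$ with $|t|\le\log N$ is needed for full-disk ideal averages. Your claim of large distance to every $\lambda^{m'}\mathcal{N}^{it}$ is in fact false for $m'=m$ when all $z_j=1$. The missing idea is to minimize over the phases pointwise: $1-\operatorname{Re}(a(p)e^{ih\arg p}\mathcal{N}(p)^{-it})\ge G(h\arg p-t\log\mathcal{N}(p))$ with $G(v):=\min_j(1-\operatorname{Re}(z_j e^{iv}))$, which removes the unknown partition entirely. One then counts primes with Theorem \ref{thm:stucky} in cells of angular width $\asymp(\log N)^{-4}$ and norm length $x^{9/10}$. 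On these cells $G(h\theta-t\log x)$ is nearly constant, uniformly in $|h|\le(\log N)^2$ and $|t|\le\log N$ (Lemma \ref{lem:primesumlowerbd}); qualitative equidistribution of $\lambda^m(p)$ does not give this joint uniformity.

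Third, your total-variation step fails as written. The standard deviation of $\Omega_j$ is $\asymp\sqrt{\log\log N}$, not $(\log N)^{1/2}$, and a box loss of $(\log N)^{d/2}$ cannot be rebalanced against a saving of $(\log N)^{-A_d}$ with $A_d<10^{-3}$. The paper sums the pointwise bound $\eta_N=(\log N)^{-A_d}\log\log N$ from Fourier inversion over $\Z/4\Z\times\{0,\dots,L\}^{d}$. It controls the complement by Markov's inequality, using the first moment $\E_{n\in K_{s,\gamma}(N)}\Omega(n)\lesssim\log\log N$ (via Lemma \ref{lem:divisorest}). Balancing $(L+1)^d\eta_N$ against $(\log\log N)/L$ with $L=(\log N)^{A_d/(d+1)}$ gives $b_d$. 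Your Parseval and Cauchy--Schwarz variant would also work, even with a better exponent, on a box of side $L$ with such a tail bound, but not with the window you specified.
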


Theorem \ref{thm:sector-disk} is independent of any convergence statement for the disk averages: it asserts that, in the stated shrinking range, angular localization does not change the asymptotic distribution of the finite prime factor data determining the action. Combining this comparison with the full-disk convergence theorem of Donoso, Le, Moreira, and Sun \cite{DLMS2024}*{Theorem E} gives the following ergodic consequence.

\begin{mainthm}\label{thm:multisec}
Let $d \geq 1$ be an integer, and let $A_d$ be as in Theorem \ref{thm:sector-disk}. Let $(Y, S)$ be a finitely generated and strongly uniquely ergodic multiplicative topological dynamical system, and let $\nu$ be the unique Borel probability measure on $Y$ that pretends to be invariant under $S$. Suppose 
\begin{align}
\#\{S_p : p \in \mathcal{P}_{\Z[i]}\}
& \leq d.
\end{align}
Then, for every $g \in C(Y)$ and every $y \in Y$,
\begin{align}
\sup_{\substack{(\log N)^{-A_d} \leq \gamma \leq 2\pi \\ s \in [0, 2\pi)}} \left|\E_{n \in K_{s,\gamma}(N)} g(S_n y) - \int_Y g \, d\nu\right|
& \longrightarrow 0,
\qquad 
N \rightarrow \infty.
\end{align}
\end{mainthm}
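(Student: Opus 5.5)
The plan is to combine Theorem \ref{thm:sector-disk} with the full-disk convergence theorem of Donoso, Le, Moreira, and Sun \cite{DLMS2024}*{Theorem E}, using the triangle inequality. Fix $g \in C(Y)$ and $y \in Y$. For every $N \geq N_d$, every $\gamma \in [(\log N)^{-A_d}, 2\pi]$ and every $s \in [0,2\pi)$ we write
\begin{align}
\left|\E_{n \in K_{s,\gamma}(N)} g(S_n y) - \int_Y g \, d\nu\right|
& \leq \left|\E_{n \in K_{s,\gamma}(N)} g(S_n y) - \E_{n \in B_N} g(S_n y)\right| + \left|\E_{n \in B_N} g(S_n y) - \int_Y g\, d\nu\right|.
\end{align}
The second term does not depend on $s$ or $\gamma$. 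Taking the supremum over $(\gamma,s)$ therefore affects only the first term, and Theorem \ref{thm:sector-disk} bounds that term by $C_d\|g\|_\infty(\log N)^{-b_d}$. This bound tends to $0$ because $b_d>0$, which holds since $\operatorname{sinc}(\pi/4d)<1$.

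It then remains to show that the disk averages $\E_{n\in B_N} g(S_n y)$ converge to $\int_Y g\,d\nu$ for each fixed $y$. This is exactly what \cite{DLMS2024}*{Theorem E} gives for finitely generated, strongly uniquely ergodic multiplicative systems, applied to the dilates $B_N$ of the closed unit disk (norm at most $N$ corresponds to the disk of radius $\sqrt N$).

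The only point needing care is that the setting matches theirs. Their notion of strong unique ergodicity is phrased via measures that pretend to be invariant, and they work over $\Z[i]$ with the action defined on all nonzero Gaussian integers, including units. Units act by $S_u$, with $S_u$ possibly nontrivial, and they are included in both the sector and disk averages. Since Theorem \ref{thm:sector-disk} already compares the averages exactly as written, only the disk statement has to be checked to include the unit action. If their theorem is formulated over a fundamental domain for associates, I would reduce to it by splitting $B_N$ into its four rotated copies, since $u B_N$-type symmetries permute the disk. I expect this bookkeeping to be the only mild obstacle; the substantive work is contained in Theorem \ref{thm:sector-disk}.
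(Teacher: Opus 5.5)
Your proposal is correct and matches the paper's proof: the paper also applies the triangle inequality, bounding the sector--disk difference uniformly by Theorem \ref{thm:sector-disk} and obtaining convergence of the disk averages at each fixed $y$ from \cite{DLMS2024}*{Theorem E}, recorded in the paper as Theorem \ref{thm:fixedmultiplicativesec}. The setting check you flag is done in the paper through Lemma \ref{lem:pretendedInvariance}, which rephrases strong unique ergodicity as invariance under the generators whose prime sets have divergent reciprocal-norm sums, before Theorem E is invoked on the full disks $B_N$.
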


Theorem \ref{thm:sector-disk} gives a quantitative comparison that is uniform in the initial point and requires no ergodicity assumption, while Theorem \ref{thm:multisec} identifies the limiting value under strong unique ergodicity, at each fixed initial point and without a rate. The explicit exponents $A_d$ and $b_d$ are not optimized.

\medskip 

Bergelson and Richter \cite{BergelsonRichter} proved convergence of ergodic averages along the prime-factor counting function with multiplicity on the positive integers in every uniquely ergodic system. Their theorem gives a dynamical generalization of the prime number theorem and recovers the classical equidistribution results of Pillai--Selberg \cites{Pil40,Sel39} modulo integers and Erd\H{o}s--Delange \cites{Erd46,Del58} for irrational multiples.

Donoso, Le, Moreira, and Sun \cite{DLMS2024}*{Theorem E} established a Gaussian analogue for finitely generated, strongly uniquely ergodic multiplicative actions along dilated F{\o}lner sequences, including disks and fixed sectors. Taking $S_n := T^{\Omega(n)}$ gives convergence for fixed-sector $\Omega$-averages. In a different direction, Giannitsi, Miheisi, and Mousavi \cite{gaussiandivisor2024} proved a divisor-weighted pointwise ergodic theorem for additive $\Z[i]$-actions.

This work was obtained independently of C\'{e}spedes and Donoso \cite{CD26}, who proved convergence in uniquely ergodic systems for prime-factor counts, with and without multiplicity, averaged over norm-truncated ideals in arbitrary number fields. Over $\Z[i]$, their ideal $\Omega$-averages agree with full-disk element averages, since every nonzero ideal has exactly four generators and $\Omega$ is constant on associates. Both additive arguments establish shift invariance, but their arithmetic input consists of Hardy-Ramanujan and Sath\'e-Selberg type estimates, whereas ours uses prime-semiprime matching adapted to shrinking sectors. Their generality in the number field complements our control of angular restrictions. They also establish strong sweeping-out phenomena in invertible non-atomic ergodic systems.

The angular distribution of Gaussian primes in fixed sectors goes back to Hecke \cites{hecke-I,hecke-II}. Subsequent work treated increasingly narrow angular ranges. Harman and Lewis \cite{HL01} studied Gaussian primes in narrow sectors, while Huang, Liu, and Rudnick \cite{HLR20} and J{\"a}rviniemi and Ter{\"a}v{\"a}inen \cite{JT24} obtained almost-all results for Gaussian primes and Gaussian almost primes in narrow sectors, respectively.

Stucky's estimates \cite{stucky2021}, which impose simultaneous angular and radial restrictions, supply the prime counts used in our matching construction and prime distance bound. For the multiplicative comparison, we combine the latter bound with the quantitative Hal\'asz theorem for Gaussian ideals of Ku\'s \cite{Kus2026}*{Theorem 1.1}. Ku\'s also proves sectorial Hal\'asz estimates under angular non-pretentiousness hypotheses. Our use of his unrestricted Gaussian-ideal estimate is different: the finite-phase argument and prime-distance bound provide the angular cancellation needed here. The resulting angular cancellation estimate, Proposition \ref{prop:angularcancellation}, is uniform over unimodular completely multiplicative functions taking a bounded number of values on primes, without assumptions on the angular distribution of the sets of primes on which each value is attained.

\medskip

\noindent\textbf{Proof Outline}
\\
The proof for $\Omega$-averages builds on the prime-semiprime matching argument of Bergelson and Richter \cite{BergelsonRichter}. The guiding observation is that division by a prime and by a semiprime with nearly the same norm and argument produces almost the same averaging region, while the corresponding changes in $\Omega$ differ by one.  The guiding observation is that division by a prime and by a semiprime with nearly the same norm and argument produces almost the same averaging region, while the corresponding changes in $\Omega$ differ by one. The sectorial Tur\'an-Kubilius inequality in Proposition \ref{prop:TuranKubelius} allows us to exploit this observation by replacing sector averages with weighted averages conditioned on divisibility, with an error controlled by averaged gcd correlations.

For shrinking sectors, the matching must occur at an angular scale finer than the smallest allowed width, while the divisor norms must remain small enough to control boundary errors. Lemma \ref{lem:prime-semiprime} meets these requirements using Stucky's prime counts in fine annular sectors. The key construction is a local thinning of the larger-prime set that retains enough reciprocal-norm mass to keep the gcd correlations small, while making the resulting semiprimes sparse enough in each fine angular--radial cell to be matched there with distinct primes. This construction gives shift invariance uniformly against all test sequences bounded by $1$, yielding Theorem \ref{thm:omegaavg}. Consequently, every weak-$*$ limit of the dynamical empirical measures is $T$-invariant. Compactness and unique ergodicity then give Theorem \ref{thm:additivesec}, including uniformity in the initial point.

The multiplicative argument rests on a different observation: a fixed number of phases cannot approximate a phase traversing the circle with arbitrarily small average error. We minimize over the possible prime values before applying Stucky's counts, obtaining a prime-distance bound without any assumption on the angular distribution of the sets of primes on which each value is attained. Combined with the Gaussian Hal\'asz estimate of \cite{Kus2026}, this proves the angular cancellation in Proposition \ref{prop:angularcancellation}. Fej\'er approximation then converts cancellation of the nonzero angular frequencies into a scalar sector-disk comparison. Normalizing by the sector cardinality introduces a loss proportional to the reciprocal width, which the logarithmic shrinking range allows us to absorb.

To pass from scalar functions to the action, we encode $S_n$ by the residual unit and the prime-factor counts associated with the finitely many generators. Fourier inversion converts the scalar character estimates into a comparison of the distributions of these data, and a first-moment truncation upgrades the pointwise comparison to total variation. Since these data determine $S_n$, this yields Theorem \ref{thm:sector-disk}; the full-disk convergence theorem of \cite{DLMS2024} then gives Theorem \ref{thm:multisec}.
\smallskip

The results also describe the interaction between angular localization and the dynamics: in the subpolynomial shrinking range, relative angular position within the sector and the dynamical variable $T^{\Omega(n)} x$ are asymptotically independent (Corollary \ref{implication2}). In the same range, $\Omega(n)$ is equidistributed modulo every fixed integer and the Gaussian Liouville function exhibits cancellation (Corollary \ref{implication1}). In the logarithmic shrinking range, we also obtain joint equidistribution modulo every fixed integer of the prime-factor counts associated with any partition of the Gaussian primes into two associate-invariant classes with divergent reciprocal-norm sums, without any angular-distribution hypothesis on either class (Corollary \ref{implication4}).

\medskip We conclude our introductory section by collecting our definition and notational conventions below.

We write $\mathbb{N} := \{1, 2, \ldots\}$ and $\mathbb{N}_0 := \mathbb{N} \cup \{0\}$. All logarithms are natural. For a positive sequence $(\gamma_N)_{N \geq 3}$, the condition $\gamma_N^{-1} = N^{o(1)}$ means $\log(1/\gamma_N) = o(\log N)$.

The Gaussian norm is multiplicative, and $\Omega$ is completely additive: $\mathcal{N}(m n) = \mathcal{N}(m) \mathcal{N}(n)$ and $\Omega(m n) = \Omega(m) + \Omega(n)$ for nonzero $m, n \in \Z[i]$. Two nonzero Gaussian integers are associates if they differ by multiplication by a unit in $\{\pm 1, \pm i\}$. The function $\Omega : \Z[i] \setminus \{0\} \rightarrow \mathbb{N}_0$ vanishes on units. We write $\mathcal{P}_{\Z[i],2} := \Omega^{-1}(\{2\})$ for the Gaussian semiprimes.

For a finite nonempty set $E$ and $f : E \rightarrow \mathbb{C}$, define
\begin{align}
\E_{n \in E} f(n)
& := \frac{1}{|E|} \sum_{n \in E} f(n), \\ \E_{n \in E}^{\log} f(n)
& := \frac{\sum_{n \in E} f(n) \mathcal{N}(n)^{-1}}{\sum_{n \in E} \mathcal{N}(n)^{-1}},
\end{align}
where the logarithmic average requires $E \subset \Z[i] \setminus \{0\}$. We use $\one_{\mathcal{C}}$ and $\one_E$ for indicators of conditions and sets, respectively, and write $\nu(m) := \nu(\{m\})$ for the mass at $m$ of a finitely supported measure.

Norms of greatest common divisors and least common multiples are independent of the choice of associates. For nonzero $m, m' \in \Z[i]$, define $\Phi(m, m') := \mathcal{N}(\gcd(m, m')) - 1.$
For nonzero $n \in \Z[i]$, the Gaussian von Mangoldt function is $\Lambda(n) := \log \mathcal{N}(p)$ if $n$ is an associate of $p^k$ with $p \in \mathcal{P}_{\Z[i]}$ and $k \in \mathbb{N}$, and $\Lambda(n) := 0$ otherwise.

We use $\mathbb{T} := \R/\Z$, $\mathbb{T}_{\mathrm{ang}} := \R/(2\pi\Z)$, and $\mathbb{S}^1 := \{z \in \mathbb{C} : |z| = 1\}$. The normalized Haar measures on the first two circles are $m_{\mathbb{T}}$ and $m_{\mathrm{ang}}$, respectively. Arguments take values in $\mathbb{T}_{\mathrm{ang}}$, and angular intervals denote their images under the quotient map, with lengths measured in radians.

For $s \in \R$, $\gamma \in (0, 2\pi]$, and real $N \geq 1$, define
\begin{align}
K_{s,\gamma}
& := \{n \in \Z[i] \setminus \{0\} : \arg(n) \in [s, s + \gamma)\}, \\ K_{s,\gamma}(N)
& := \{n \in K_{s,\gamma} : \mathcal{N}(n)
\leq N\}.
\end{align}
We call $K_{s,\gamma}$ a sector of width $\operatorname{width}(K_{s,\gamma}) := \gamma$. The initial angle is taken modulo $2\pi$, and $\gamma = 2\pi$ gives the full sector $\Z[i] \setminus \{0\}$. We write $B_N := K_{0,2\pi}(N)$ for the full disk. Normalized averages are used only when the averaging sets are nonempty; by Lemma \ref{lem:countsec}, this holds uniformly for sufficiently large $N$ in the sector ranges of the main theorems.

For a sector $J$, a nonzero $m \in \Z[i]$, and real numbers $0 \leq \alpha < \beta$, set
\begin{align}
J_{[m]}
& := \{n \in \Z[i] \setminus \{0\} : m n \in J\}, \\ J[\alpha, \beta]
& := \{n \in J : \alpha
\leq \mathcal{N}(n)
< \beta\}.
\end{align}
The sector $J_{[m]}$ has the same width as $J$, and the half-open annular bounds in $J[\alpha, \beta]$ refer to the norm.

For $n \in K_{s,\gamma}$, its relative angular position $u_{s,\gamma}(n)$ is the unique $u \in [0, 1)$ satisfying $\arg(n) = s + \gamma u$ in $\mathbb{T}_{\mathrm{ang}}$. Admissible sector ranges are specified in the relevant statement or proof.

\section{Proof for the \texorpdfstring{$\Omega$}{Omega} averages}
\subsection{Auxiliary results}

The following proposition gives a sectorial Tur\'an-Kubilius inequality for finitely supported probability measures on the nonzero Gaussian integers. It compares a sector average with a weighted average of conditional averages over divisibility classes, with the error controlled by the corresponding gcd correlations.

\begin{proposition}\label{prop:TuranKubelius}
Let $J \subset \Z[i]$ be a sector, write $\gamma := \operatorname{width}(J)$, and let $\nu$ be a finitely supported probability measure on $\Z[i] \setminus \{0\}$. Let $R := \max\{\mathcal{N}(m) : m \in \operatorname{supp}(\nu)\}$. All sums over $m$ and $m'$ in this proposition and its proof are taken over $\operatorname{supp}(\nu)$. There exist absolute constants $c, C > 0$ so that, if $\gamma (N/R)^{1/2} \geq c$, the conditional averages below are well-defined, and for every arithmetic function $\alpha : \Z[i] \setminus \{0\} \rightarrow \mathbb{C}$ with $|\alpha| \leq 1$,
\begin{align}
\left| \E_{\substack{n \in J \\ \mathcal{N}(n) \leq N}} \hspace*{-0.3cm} \alpha(n) - \sum_m \nu(m) \hspace*{-0.3cm} \E_{\substack{n \in J \\ \mathcal{N}(n) \leq N \\ m \mid n}} \hspace*{-0.3cm} \alpha(n) \right|
& \leq \left( \sum_{m, m'} \nu(m) \nu(m') \Phi(m, m') + C \frac{R^{3/2}}{\gamma N^{1/2}} \right)^{1/2} \hspace*{-0.5cm} + C \frac{R^{1/2}}{\gamma N^{1/2}}.
\end{align}
\end{proposition}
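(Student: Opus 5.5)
This follows the standard Bergelson--Richter Tur\'an--Kubilius argument, adapted to a sector. Write $E := J(N) := \{n \in J : \mathcal{N}(n) \leq N\}$. The first ingredient is a lattice-point count. For nonzero $m$, the map $n \mapsto n/m$ identifies $\{n \in E : m \mid n\}$ with $J_{[m]}(N/\mathcal{N}(m))$. This is a sector of width $\gamma$ and radius $(N/\mathcal{N}(m))^{1/2}$.

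A sector of width $\gamma$ and radius $r$ contains $\gamma r^2/2 + O(r)$ lattice points, with the boundary error absolute. Applied here this gives
\begin{align}
\#\{n \in E : m \mid n\} = \frac{|E|}{\mathcal{N}(m)}\left(1 + O\!\left(\frac{\mathcal{N}(m)^{1/2}}{\gamma N^{1/2}}\right)\right).
\end{align}
Choosing $c$ large makes the relative error at most $1/2$ uniformly for $\mathcal{N}(m) \leq R$. Hence every conditional average is well defined, with $|E| \asymp \gamma N$.

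Next, introduce the weight $w(n) := \sum_m \nu(m)\mathcal{N}(m)\one_{m\mid n}$. Then
\begin{align}
\sum_m \nu(m)\E_{n\in E, m\mid n}\alpha(n) = \E_{n \in E}\alpha(n)\tilde w(n),
\end{align}
where $\tilde w(n) := \sum_m \nu(m)\one_{m\mid n}\,|E|/\#\{n\in E: m\mid n\}$. By the count above, $|\tilde w - w| \leq \sum_m \nu(m)\mathcal{N}(m)\one_{m\mid n}\cdot O(R^{1/2}/(\gamma N^{1/2}))$. The average over $E$ of $\sum_m \nu(m)\mathcal{N}(m)\one_{m\mid n}$ is $O(1)$, by the same count. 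Since $|\alpha|\leq 1$, replacing $\tilde w$ by $w$ therefore costs $C R^{1/2}/(\gamma N^{1/2})$, which is the additive term in the statement. It remains to bound $|\E_{n\in E}\alpha(n)(1-w(n))|$, and by Cauchy--Schwarz this is at most $(\E_{n\in E}|1-w(n)|^2)^{1/2}$.

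The main computation is the variance. Expand
\begin{align}
\E|1-w|^2 = 1 - 2\E w + \sum_{m,m'}\nu(m)\nu(m')\mathcal{N}(m)\mathcal{N}(m')\,\E_{n\in E}\one_{m\mid n}\one_{m'\mid n}.
\end{align}
Here $m\mid n$ and $m'\mid n$ together are equivalent to $\operatorname{lcm}(m,m')\mid n$, and $\mathcal{N}(\operatorname{lcm}) = \mathcal{N}(m)\mathcal{N}(m')/\mathcal{N}(\gcd)$. Applying the count with $\mathcal{N}(\operatorname{lcm}) \leq R^2$ gives
\begin{align}
\E_{n\in E}\one_{\operatorname{lcm}\mid n} = \frac{1}{\mathcal{N}(\operatorname{lcm})} + O\!\left(\frac{\mathcal{N}(\operatorname{lcm})^{1/2}}{\mathcal{N}(\operatorname{lcm})\,\gamma N^{1/2}}\right).
\end{align}
So each diagonal term equals $\mathcal{N}(\gcd(m,m'))$ up to an error $O(\mathcal{N}(m)\mathcal{N}(m')\mathcal{N}(\operatorname{lcm})^{-1/2}/(\gamma N^{1/2}))$. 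Using $\mathcal{N}(\operatorname{lcm}) \geq \max(\mathcal{N}(m),\mathcal{N}(m'))$, this error is at most $O(R^{3/2}/(\gamma N^{1/2}))$. Similarly, $\E w = 1 + O(R^{1/2}/(\gamma N^{1/2}))$. Summing against $\nu\otimes\nu$, and using $\sum\nu = 1$, yields
\begin{align}
\E|1-w|^2 \leq \sum_{m,m'}\nu(m)\nu(m')\bigl(\mathcal{N}(\gcd(m,m'))-1\bigr) + C\frac{R^{3/2}}{\gamma N^{1/2}}.
\end{align}
This is exactly $\sum\nu\nu\Phi$ plus the stated error.

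The main obstacle I expect is the uniform lattice count in a thin sector. I need the boundary error to be $O(r)$, which comes from the two radial segments and the arc, and not $O(r^{2/3})$ with dependence on $\gamma$. The error must also not depend on the angle $s$ or on the rotated sector $J_{[m]}$. I would prove this directly by covering the boundary of the sector with unit squares, or cite Lemma \ref{lem:countsec} if it provides this. The condition $\gamma(N/R)^{1/2}\geq c$ is exactly what keeps the main term $\gamma r^2$ dominant over $r$ at the smallest radius $r = (N/R)^{1/2}$.
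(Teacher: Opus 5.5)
Your proposal is correct and follows essentially the paper's argument. The paper also isolates the normalization error via $|b_m/d_m - \mathcal{N}(m) b_m| \le |1 - \mathcal{N}(m) d_m|$, which is exactly what your $\tilde w$ versus $w$ comparison reduces to. It then applies Cauchy--Schwarz to $\E_{n \in E} \alpha(n) \sum_m \nu(m)\bigl(1 - \mathcal{N}(m) \one_{m \mid n}\bigr)$ and evaluates the second moment through the lcm identity using Lemma~\ref{lem:divisorest}. The uniform thin-sector lattice count you flag as the main obstacle is supplied by Lemma~\ref{lem:countsec}.
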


A crucial step we will use is the following theorem by Stucky, which gives us the desired counts of primes in small sectorial cells.

\begin{theorem}\label{thm:stucky}
Fix $\zeta > 0$. Let $x > 1$, let $y = x^\theta$ with $7/10 < \theta < 1$, and let $I$ be the image of $[\varphi, \varphi + \delta)$ in $\mathbb{T}_{\mathrm{ang}}$, where $\varphi \in \R$ and $0 < \delta \leq \pi/2$. Suppose that $\delta y \geq x^{7/10 + \zeta}.$
Then, as $x \rightarrow \infty$,
\begin{align}
\psi(x, y; I)
& := \sum_{\substack{a \in \Z[i] \setminus \{0\} \\ x - y < \mathcal{N}(a) \leq x \\ \arg(a) \in I}} \Lambda(a) 
= \big(1 + o(1)\big) \frac{2\delta y}{\pi},
\end{align}
where, for fixed $\zeta$, the relative error tends to zero uniformly over $\varphi$, $\delta$, and $\theta$ satisfying the stated conditions.
\end{theorem}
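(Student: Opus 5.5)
This is Stucky's estimate \cite{stucky2021}, which we use as a black box. If one wanted to reprove it, the natural route is the classical Hecke--Huxley strategy for primes in short intervals, carried out uniformly over the Hecke Grössencharacters $\xi^k(a) := (a/|a|)^{4k}$ of $\Q(i)$. I have not checked the numerics, in particular whether the exponent $7/10$ comes out exactly.

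\textbf{Step 1: pass to ideals.} Since $\Lambda$ is constant on associates, we can rewrite $\psi(x,y;I)$ as a sum over nonzero ideals $\mathfrak a$. Each ideal is weighted by the number of its four generators whose argument lies in $I$. That number is a $2\pi$-periodic function of $4\arg(a)$, and since $\delta \leq \pi/2$ its mean value is $4\delta/(2\pi)=2\delta/\pi$.

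\textbf{Step 2: smooth the angular indicator.} We sandwich the indicator of $I$ between Beurling--Selberg (or Fejér-smoothed) trigonometric polynomials in $4\arg$ of degree $H := x^{\zeta/4}\delta^{-1}$. This costs only $O(1/H) = o(\delta)$ in the main term. Expanding in frequencies gives
\begin{align}
\psi(x,y;I) = \frac{2\delta}{\pi}\bigl(1+O(x^{-\zeta/4})\bigr)\,\psi_{\mathrm{id}}(x,y) + \sum_{0<|k|\leq H} c_k\, \psi(x,y;\xi^k),
\end{align}
with coefficients satisfying $|c_k| \ll \delta$. Here $\psi_{\mathrm{id}}(x,y)=\sum_{x-y<\mathcal N(\mathfrak a)\le x}\Lambda(\mathfrak a)$, and $\psi(x,y;\xi^k)$ is the same sum twisted by $\xi^k$.

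\textbf{Step 3: main term.} The term $k=0$ is the prime ideal theorem in short intervals. This gives $\psi_{\mathrm{id}}(x,y)=(1+o(1))y$ as soon as $y \geq x^{7/12+\varepsilon}$, which is implied by $\theta>7/10$.

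\textbf{Step 4: non-principal terms via the explicit formula.} For each $k\neq 0$, the truncated explicit formula for $L(s,\xi^k)$ at height $T := x^{1+\varepsilon}/y$ gives
\begin{align}
\psi(x,y;\xi^k) \ll y\sum_{\substack{\rho=\beta+i t\\ |t|\leq T}} x^{\beta-1} + x^{\varepsilon}.
\end{align}
Summing over $k$ and using $|c_k|\ll\delta$, it suffices to show that
\begin{align}
\sum_{0<|k|\le H}\ \sum_{|t|\le T} x^{\beta-1} = o(1).
\end{align}
To do this we split the zeros according to $\beta$ and write $\sum x^{\beta-1} \ll \int_0^1 x^{\sigma-1}\,dN(\sigma;T,H)$. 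Here $N(\sigma;T,H)$ counts zeros with $\beta\ge\sigma$ and $|t|\le T$ over all $0<|k|\le H$, which amounts to counting zeros in a box of analytic conductor $\asymp HT$.

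\textbf{Main obstacle: a uniform zero-density estimate.} The hardest step is a zero-density estimate for this family, of Huxley type:
\begin{align}
N(\sigma;T,H) \ll (HT)^{c(1-\sigma)}(\log x)^{A}.
\end{align}
I would try to prove it by mollified zero detection combined with the large sieve and mean-value theorems for Hecke characters in the joint $(k,t)$ aspect, following Coleman's adaptation of Huxley's argument. Near $\sigma=1$ it would be complemented by a Vinogradov--Korobov type zero-free region for $L(s,\xi^k)$ that is uniform in $k$, which removes the last logarithmic factors. The hypothesis $\delta y\ge x^{7/10+\zeta}$ is designed to make $HT \ll x^{3/10-\zeta/2}$. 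With density constant $c\le 10/3$ this gives $(HT)^{c(1-\sigma)}x^{\sigma-1}\le x^{-(\zeta/3)(1-\sigma)}$, which is summable to $o(1)$.

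\textbf{Uniformity.} All the bounds above depend only on $\zeta$, so the conclusion holds uniformly in $\varphi$, $\delta$ and $\theta$, as the statement requires.
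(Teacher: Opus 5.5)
Your proposal agrees with the paper, which also takes the statement directly from Stucky's Theorem 1.1, so the Hecke--Huxley sketch is not needed. As written, its Step 4 would also need repair: at height $T=x^{1+\varepsilon}/y$ the truncation error is $O(x\log^2 x/T)=O(yx^{-\varepsilon}\log^2 x)$ per character, not $O(x^{\varepsilon})$, and summed over the $H$ characters this is not $o(\delta y)$ for small $\delta$ unless you smooth the radial cutoff or combine the Perron errors across $k$ before estimating.

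The only work the paper actually does is the transfer from Stucky's ideal formulation. There, the role of $\delta\le\pi/2$ is not the mean value in your Step 1, which equals $4\delta/(2\pi)$ for any $\delta$. It is that the half-open interval $[\varphi,\varphi+\delta)$ projects injectively to $\R/((\pi/2)\Z)$, so every ideal whose argument lies in the projected interval has exactly one generator with argument in $I$. Changing the endpoint convention and the weights on higher prime powers then costs only $O(x^{1/2}(\log x)^2)$, which is negligible since $\delta y\ge x^{7/10+\zeta}$.
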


This is the Gaussian-element formulation of \cite{stucky2021}*{Theorem 1.1}. In the ideal formulation, arguments are taken modulo $\pi/2$. Since $\delta \leq \pi/2$ and the angular interval is half-open, its projection onto $\R/((\pi/2)\Z)$ is injective. Every nonzero ideal whose argument lies in the projected interval has exactly one generator with argument in $I$. Thus the ideal sum transfers to the element sum without an additional factor of four, including when $I$ crosses a quadrant boundary. Changing Stucky's angular endpoint convention to $[\varphi, \varphi + \delta)$ affects only the two boundary rays. Inside the disk of radius $x^{1/2}$, these contain $O(x^{1/2})$ Gaussian integers, so their total weighted contribution is $O(x^{1/2}\log x)$, uniformly in the initial angle. Moreover, prime powers of exponent at least two contribute at most $O(x^{1/2}(\log x)^2)$, whether weighted by $\log \mathcal{N}(p)$ or by $\log \mathcal{N}(p^k)$, so the choice of weight on those terms does not affect the asymptotic. Both errors are negligible because
\begin{align}
\frac{x^{1/2}(\log x)^2}{\delta y}
& \leq x^{-1/5 - \zeta}(\log x)^2
\longrightarrow 0.
\end{align}

The main technical lemma is then as follows.

\begin{lemma}\label{lem:prime-semiprime}
Let $(\gamma_N)_{N \geq 3}$ be a sequence of positive real numbers satisfying $\gamma_N \rightarrow 0$ and $\gamma_N^{-1} = N^{o(1)}$ as $N \rightarrow \infty$, and fix $\varepsilon \in (0, 1)$. For $N \geq 3$, set
\begin{align}
\lambda_N := \frac{\gamma_N}{\exp(\sqrt{\log N})}, \qquad 
M_N := \lfloor \lambda_N^{-1}\rfloor, \qquad 
\rho_N := \exp(\lambda_N).
\end{align}
Then, for all sufficiently large $N$, there exist finite and nonempty sets $S_{1, N}, S_{2, N} \subset \Z[i] \setminus \{0\}$ with the following properties:
\begin{enumerate}[label=\textnormal{(\alph*)}]
\item \label{part:2.3a}$S_{1, N} \subset \mathcal{P}_{\Z[i]}$ and $S_{2, N} \subset \mathcal{P}_{\Z[i], 2}$.

\item \label{part:2.3b}There exists a decomposition of $\Z[i] \setminus \{0\}$ into $M_N$ sectors of equal angular width,
\begin{align}
\Z[i] \setminus \{0\}
& = \bigsqcup_{j = 1}^{M_N} Q_{j, N}, \quad \text{ where } \quad \operatorname{width}(Q_{j, N})
= \frac{2\pi}{M_N},
\end{align}
such that, for every $1 \leq j \leq M_N$ and every $k \in \mathbb{N}_0$,
\begin{align}
\big|S_{1, N} \cap Q_{j, N}[\rho_N^k, \rho_N^{k + 1}]\big|
& = \big|S_{2, N} \cap Q_{j, N}[\rho_N^k, \rho_N^{k + 1}]\big|.
\end{align}

\item \label{part:2.3c}For each $i = 1, 2$,
\begin{align}
\E_{m \in S_{i, N}}^{\log} \E_{m' \in S_{i, N}}^{\log} \Phi(m, m')
& \leq \varepsilon.
\end{align}

\item \label{part:2.3d}If $R_N := \max_{m \in S_{1, N} \cup S_{2, N}} \mathcal{N}(m)$, then the sets may be chosen so that as $N \rightarrow \infty$
\begin{align}
R_N
= O(N^{1/4 + o(1)}), \quad \frac{R_N^{3/2}}{\gamma_N N^{1/2}}
& \longrightarrow 0.
\end{align}
\end{enumerate}
\end{lemma}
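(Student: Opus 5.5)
We will adapt the Bergelson--Richter prime--semiprime matching, localized to the fine cells $Q_{j,N}[\rho_N^k,\rho_N^{k+1}]$. First we fix the scales. Choose $\eta>0$ small in terms of $\varepsilon$. Let $S_{1,N}$ be primes with norms in a window $[P_0,P_1]$, and let $S_{2,N}$ be products $pq$ with $p\in[Q_0,Q_1]$ and $q$ in a window of much larger primes. The windows are arranged so that norms of $S_{2,N}$ fall inside the range covered by $S_{1,N}$, and all norms are at most $N^{1/4+o(1)}$. Since $\lambda_N^{-1}=\gamma_N^{-1}e^{\sqrt{\log N}}=N^{o(1)}$, the cells have angular width $\asymp\lambda_N$ and relative radial width $\asymp\lambda_N$. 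A cell at norm $x$ therefore has $y\asymp\lambda_N x$ and $\delta\asymp\lambda_N$. Taking $x\geq N^{c}$ for a small fixed $c>0$ gives $\delta y=x^{1-o(1)}\geq x^{7/10+\zeta}$. Theorem \ref{thm:stucky} then applies uniformly to every cell and shows that each cell at norm $x$ contains $(1+o(1))\cdot 2\delta y/(\pi\log x)$ primes. Property \ref{part:2.3d} follows from the choice of windows: $R_N\leq N^{1/4+o(1)}$, so $R_N^{3/2}/(\gamma_N N^{1/2})\leq N^{-1/8+o(1)}\to0$.

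Next we construct $S_{2,N}$ by local thinning. The semiprimes $pq$ landing in a given cell are typically more numerous than the primes in that cell, roughly by a factor of $\log x\cdot\sum_p 1/\mathcal{N}(p)$ over the window. To correct this, we use the fine cells of the larger prime $q$ and keep only a proportion $\theta$ of the $q$'s in each cell (any choice rule works, for example the first few in a fixed ordering). The proportion $\theta$ is chosen so that the expected number of retained semiprimes in every target cell is at most half the number of primes there. The number of semiprimes landing in a target cell is a sum over $p$ of counts of retained $q$ in the cell of angle $\arg(\text{target})-\arg p$ and norm $\text{target}/\mathcal{N}(p)$. Each such count is governed uniformly by Stucky's asymptotic applied to the retained proportion, so the total is $\leq\frac12$ of the prime count, up to a $1+o(1)$ factor. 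With this bound in hand, we define $S_{1,N}$ in each cell by choosing exactly as many distinct primes as there are retained semiprimes in that cell. This gives \ref{part:2.3a} and \ref{part:2.3b} exactly. Care is needed at cell boundaries: a semiprime's cell is determined by its own argument and norm, and since the counts are local it suffices to take $\rho_N$-annuli and angular cells aligned globally.

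The remaining task is \ref{part:2.3c}. For distinct primes $\Phi=0$, so for $S_{1,N}$ the double log-average of $\Phi$ is at most $\max(\mathcal{N}(m)-1)\cdot\sum_{m\in S_{1,N}} \mathcal{N}(m)^{-1}/W_1^2$, where $W_1$ is the total log-mass. Since each prime in $S_{1,N}$ contributes $\mathcal N(m)^{-1}\cdot\mathcal N(m)$, the bound is $\lesssim |S_{1,N}|\cdot(\ldots)$. The correct computation is that the diagonal contributes $\sum_m \mathcal{N}(m)^{-2}(\mathcal{N}(m)-1)/W_1^2\leq 1/W_1$, which is small provided $W_1\to\infty$. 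For $S_{2,N}$ we must also bound off-diagonal pairs sharing one prime factor. Grouping by the shared prime gives bounds of the form $1/W_p$ and $1/W_q$, where $W_p=\sum 1/\mathcal{N}(p)$ and $W_q=\theta\sum 1/\mathcal{N}(q)$ are the reciprocal-norm masses of the two windows. Each contributes $\lesssim 1/W$, with diagonal terms bounded similarly.

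The main obstacle is keeping these masses large. Since $\sum 1/\mathcal{N}(p)$ over $[N^{a},N^{b}]$ is only $\log(b/a)$, large masses force windows spanning many multiplicative scales, for instance $p\in[N^{c\eta^{K}},N^{c\eta}]$ with nested ratios, as in Bergelson--Richter. This must be balanced against the thinning factor $\theta$, which must not destroy $W_q$, and against the requirement that $S_{1,N}$'s log-mass $W_1$ be large even though it is forced to match. I would try a multiscale construction: take blocks of dyadic-in-$\log$ scales $\mathcal{N}(p)\in[N^{c/2^{k+1}},N^{c/2^{k}}]$ and $\mathcal{N}(q)$ correspondingly sized. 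Each block contributes log-mass $\asymp 1$ to both sets, and using $K\asymp 1/\varepsilon$ blocks makes all three $1/W$ bounds at most $\varepsilon$, while Stucky's theorem still applies since every norm exceeds $N^{c2^{-K}}$ with $K$ fixed.
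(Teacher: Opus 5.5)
Your architecture is the paper's: a global grid of cells $Q_{j,N}[\rho_N^r,\rho_N^{r+1}]$, $S_{2,N}=P_1\cdot P_2$ with the larger-prime set thinned by a fixed proportion in each grid cell, Theorem~\ref{thm:stucky} for local counts, cell-by-cell matching, and (c) via reciprocal-norm masses. The gap is the step you call the main obstacle. You never close the balance between the masses and the thinning, and the proposed resolution is miscounted.

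First, the overcount ratio is off by a factor $\log x$. Products $pq$ in a cell at norm $x$ outnumber the primes there by about $\sum_p \mathcal{N}(p)^{-1}\log x/\log(x/\mathcal{N}(p))$. This is at most $3W_p$ once $\mathcal{N}(p)<\mathcal{N}(q)$, since then $\log(x/\mathcal{N}(p))\geq\frac13\log x$. With your factor, $\theta$ would have to be $\lesssim 1/(W_p\log x)$ and $W_q$ would tend to $0$, which is fatal. With the correct factor, $\theta\asymp 1/W_p$ is a constant. For the retained $q$ you also get only an upper bound with a constant loss, not a $(1+o(1))$ asymptotic: you must cover the rotated, rescaled cell $(Q_{j,N})_{[p]}[x/\mathcal{N}(p),\rho_N x/\mathcal{N}(p)]$ by at most four grid cells. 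This loss is harmless.

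Second, the block count does not close. With a full product $P_1\cdot P_2$, as you describe, $\theta\lesssim 1/W_p$. So $K\asymp 1/\varepsilon$ blocks of raw mass $\asymp 1$ on each side give $W_p\asymp K$ but $W_q\asymp\theta K\asymp 1$, and $1/W_q$ is not at most $\varepsilon$. A block-diagonal union $\bigsqcup_k P^{(k)}Q^{(k)}$ with separated product ranges might allow a constant $\theta$. But then Lemma~\ref{lem2.5} no longer applies, and the $\Phi$ bound must be redone.

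What works, and what the paper does, is the following. Freeze $W_p\in[L,L+1]$ with $L=12/\varepsilon$, take the thinning proportion $\eta\asymp 1/L$, and give the $q$-window raw reciprocal mass $\gg L^2$. The paper makes this mass divergent using the scales $X_N<Y_N<Z_N=N^{1/4}$ of Lemma~\ref{lem:largerecipricalmass}, whose logarithmic ratios diverge. With your fixed-power windows $[N^a,N^b]$ you would need $\log(b/a)\gtrsim\varepsilon^{-2}$ for the $q$-window. That is also fine, and it makes the Stucky range check trivial.

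Third, the mass of $S_{1,N}$ is not an extra constraint. Matched elements lie in the same $\rho_N$-annulus, so $W(S_{1,N})\geq\rho_N^{-1}W(P_1)W(P_2)\geq L^2/\rho_N$, where $W(\cdot)$ denotes reciprocal-norm mass. Hence Lemma~\ref{lem2.4} applies.

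Finally, distinct associates have $\Phi\neq 0$, so your claim that $\Phi=0$ for distinct primes is false as stated. Restrict $P_1$ and $P_2$ to a fixed sector of width less than $\pi/2$ inside one quadrant. This makes $P_1\times P_2\to S_{2,N}$ injective and lets Lemma~\ref{lem2.5} give (c) for $S_{2,N}$.
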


\begin{lemma}\label{lem:matchingavg}
Let $N \geq 1$, $\gamma \in (0, 2\pi]$, and $R \geq 1$, and let $g : \Z[i] \setminus \{0\} \rightarrow \mathbb{C}$ satisfy $|g| \leq 1$. Suppose that $\gamma (N/R)^{1/2}$ is sufficiently large. For $s \in [0, 2\pi)$ and $m \in \Z[i] \setminus \{0\}$ with $\mathcal{N}(m) \leq R$, set
\begin{align}
F_{N, s, \gamma}(m)
& := \E_{\substack{n \in (K_{s, \gamma})_{[m]} \\ \mathcal{N}(n) \leq N/\mathcal{N}(m)}} g(n).
\end{align}
Let $m, m' \in \Z[i] \setminus \{0\}$ satisfy $\mathcal{N}(m), \mathcal{N}(m') \leq R$. Assume that their arguments differ modulo $2\pi$ by at most $h$, and that
\begin{align}
\left|\log \mathcal{N}(m) - \log \mathcal{N}(m')\right|
& \leq \eta
\leq \frac{1}{2}.
\end{align}
Then, uniformly in $s \in [0, 2\pi)$,
\begin{align}
\left|F_{N, s, \gamma}(m) - F_{N, s, \gamma}(m')\right|
& \lesssim \frac{h}{\gamma} + \eta + \frac{1}{\gamma}\left(\frac{N}{R}\right)^{-1/2},
\end{align}
where the implied constant is absolute.
\end{lemma}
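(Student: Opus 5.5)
The plan is to reduce both conditional averages to plain averages of $g$ over two lattice-point sectors, and then bound their difference by the size of the symmetric difference of these sets relative to their cardinality.

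\textbf{Step 1: rewrite $F_{N,s,\gamma}(m)$ as a sector average.} Since $\arg(mn) = \arg(m) + \arg(n)$ in $\mathbb{T}_{\mathrm{ang}}$, we have $(K_{s,\gamma})_{[m]} = K_{s - \arg(m), \gamma}$. Put $X := N/\mathcal{N}(m)$ and $X' := N/\mathcal{N}(m')$. Then
\begin{align}
F_{N,s,\gamma}(m) = \E_{n \in E} g(n), \qquad F_{N,s,\gamma}(m') = \E_{n \in E'} g(n),
\end{align}
where $E := K_{s-\arg(m),\gamma}(X)$ and $E' := K_{s-\arg(m'),\gamma}(X')$. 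Both $X$ and $X'$ are at least $N/R$, and the hypothesis on $\eta$ gives $X' \leq e^{1/2} X$ and $X \leq e^{1/2} X'$.

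\textbf{Step 2: count lattice points in sectors.} For a sector of width $\gamma$ and norm cutoff $Y$, the lattice count is
\begin{align}
|K_{a,\gamma}(Y)| = \frac{\gamma Y}{2} + O(Y^{1/2}),
\end{align}
uniformly in $a$. This holds because the region has area $\gamma Y/2$ and perimeter $O(Y^{1/2})$. It is presumably the content of Lemma \ref{lem:countsec}; otherwise it follows by the standard boundary-strip argument. Since $\gamma Y^{1/2} \geq \gamma (N/R)^{1/2}$ is assumed sufficiently large, the error term is at most $\gamma Y/4$. Hence $|E| \asymp \gamma X$ and $|E'| \asymp \gamma X$.

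\textbf{Step 3: bound the symmetric difference.} For $|g| \leq 1$ we have the elementary inequality
\begin{align}
\bigl|\E_E g - \E_{E'} g\bigr| \leq \frac{2\,|E \triangle E'|}{\max(|E|,|E'|)}.
\end{align}
If $h \geq \gamma$, the claimed bound is trivial, since the left side is at most $2$. So assume $h < \gamma$. Pass from $E$ to $E'$ in two moves.
\begin{enumerate}[label=(\roman*)]
\item Rotating the initial angle by at most $h$ at fixed cutoff $X$ changes the set only within two angular strips of width $h$ and norm at most $X$. Each strip contains $hX/2 + O(X^{1/2})$ lattice points, by the same area-plus-perimeter count.
\item Changing the cutoff from $X$ to $X'$ at fixed angle changes the set only within an annular sector of width $\gamma$ and norms between $\min(X,X')$ and $\max(X,X')$. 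This region contains $O(\gamma |X - X'|) + O(X^{1/2})$ points. Moreover $|X - X'| \lesssim \eta X$, because $|\log X - \log X'| \leq \eta \leq 1/2$.
\end{enumerate}
Combining the two moves,
\begin{align}
|E \triangle E'| \lesssim hX + \gamma \eta X + X^{1/2}.
\end{align}
Dividing by $|E| \gtrsim \gamma X$ gives
\begin{align}
\bigl|F_{N,s,\gamma}(m) - F_{N,s,\gamma}(m')\bigr| \lesssim \frac{h}{\gamma} + \eta + \frac{1}{\gamma X^{1/2}}.
\end{align}
The last term is at most $\gamma^{-1}(N/R)^{-1/2}$ because $X \geq N/R$. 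All constants are absolute and independent of $s$.

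\textbf{Main obstacle.} The only real care is making the lattice-point counts uniform in the sector's initial angle and in the width, including thin regions such as strips of width $h$ and thin annular pieces. These must carry an additive $O(Y^{1/2})$ error rather than a multiplicative one. I would handle this by covering the boundary of each region with $O(Y^{1/2})$ unit squares, which gives the stated errors uniformly.
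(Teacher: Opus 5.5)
Your proposal is correct and takes essentially the same approach as the paper. You rewrite both conditional averages as averages over $K_{s-\arg(m),\gamma}(X)$ and $K_{s-\arg(m'),\gamma}(X')$, bound the symmetric difference via Lemma \ref{lem:countsec} by angular wedges of width $O(h)$ plus an annular sector of width $\gamma$ with cutoff gap $O(\eta X)$, and divide by a cardinality $\gtrsim \gamma X$. The differences are only cosmetic: the intermediate-set decomposition, the explicit trivial case $h \ge \gamma$, and normalizing by $\max\{|E|,|E'|\}$ instead of the paper's $\min$ are all valid.
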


\begin{lemma}\label{lem:uniformcomp}
Let $(\gamma_N)_{N \geq 3}$ be a sequence of positive real numbers satisfying $\gamma_N \rightarrow 0$ and $\gamma_N^{-1} = N^{o(1)}$ as $N \rightarrow \infty$. Fix $\varepsilon \in (0, 1)$, and let $S_{1, N}$ and $S_{2, N}$ be the finite nonempty sets supplied by Lemma \ref{lem:prime-semiprime} for this sequence and this choice of $\varepsilon$. Then, uniformly over all functions $g : \Z[i] \setminus \{0\} \rightarrow \mathbb{C}$ with $|g| \leq 1$,
\begin{align}
\sup_{\gamma_N \leq \gamma \leq 2\pi} \sup_{s \in [0, 2\pi)} \Bigg| \E_{m \in S_{1, N}}^{\log} \E_{\substack{n \in (K_{s, \gamma})_{[m]} \\ \mathcal{N}(n) \leq N/\mathcal{N}(m)}} g(n) - \E_{m' \in S_{2, N}}^{\log} \E_{\substack{n \in (K_{s, \gamma})_{[m']} \\ \mathcal{N}(n) \leq N/\mathcal{N}(m')}} g(n) \Bigg|
& \longrightarrow 0
\end{align}
as $N \rightarrow \infty$.
\end{lemma}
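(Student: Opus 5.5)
The plan is to pair up the elements of $S_{1,N}$ and $S_{2,N}$ inside each fine cell and compare the paired conditional averages using Lemma \ref{lem:matchingavg}. Fix $N$ large and write $Q_j := Q_{j,N}$, $\rho := \rho_N$, $\lambda := \lambda_N$, $R := R_N$, and $F(m) := F_{N,s,\gamma}(m)$ as in Lemma \ref{lem:matchingavg}.

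\textbf{Step 1: the matching.} By Lemma \ref{lem:prime-semiprime}\ref{part:2.3b}, each cell $C_{j,k} := Q_j[\rho^k,\rho^{k+1}]$ contains equally many elements of $S_{1,N}$ and of $S_{2,N}$. I will therefore choose, cell by cell, a bijection $\sigma : S_{1,N} \to S_{2,N}$ with $\sigma(C_{j,k} \cap S_{1,N}) = C_{j,k} \cap S_{2,N}$. For paired elements $m$ and $\sigma(m)$:
\begin{itemize}
\item their arguments differ by at most $h := 2\pi/M_N \lesssim \lambda$;
\item their log-norms differ by at most $\eta := \log\rho = \lambda \le 1/2$.
\end{itemize}
Since the norms lie in the same interval $[\rho^k,\rho^{k+1})$, the weights satisfy $\mathcal{N}(\sigma(m))^{-1} = \mathcal{N}(m)^{-1}(1+O(\lambda))$.

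\textbf{Step 2: the normalizations.} Summing the weight comparison over all pairs gives $W_2 = W_1(1+O(\lambda))$, where $W_i := \sum_{m \in S_{i,N}} \mathcal{N}(m)^{-1}$. Since $|F| \le 1$, it follows that
\begin{align}
\E^{\log}_{m' \in S_{2,N}} F(m') = \frac{1}{W_2}\sum_{m \in S_{1,N}} \mathcal{N}(\sigma(m))^{-1} F(\sigma(m)) = \frac{1}{W_1}\sum_{m \in S_{1,N}} \mathcal{N}(m)^{-1}F(\sigma(m)) + O(\lambda).
\end{align}

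\textbf{Step 3: comparing paired averages.} The difference in the lemma is therefore at most
\begin{align}
\max_{m \in S_{1,N}} |F(m) - F(\sigma(m))| + O(\lambda).
\end{align}
Lemma \ref{lem:matchingavg} bounds the maximum by a constant times
\begin{align}
\frac{\lambda}{\gamma} + \lambda + \frac{1}{\gamma}\Big(\frac{N}{R}\Big)^{-1/2}.
\end{align}
This bound is uniform in $s$ and in $g$ with $|g| \le 1$. For $\gamma \ge \gamma_N$ it is at most
\begin{align}
e^{-\sqrt{\log N}} + \lambda + \gamma_N^{-1}R^{1/2}N^{-1/2},
\end{align}
using $\lambda/\gamma_N = e^{-\sqrt{\log N}}$.

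\textbf{Step 4: the hypotheses and the limit.} By Lemma \ref{lem:prime-semiprime}\ref{part:2.3d}, $R^{1/2}/(\gamma_N N^{1/2}) \le R^{3/2}/(\gamma_N N^{1/2}) \to 0$, since $R \ge 1$. In particular $\gamma_N (N/R)^{1/2} \to \infty$, which gives the ``sufficiently large'' hypothesis of Lemma \ref{lem:matchingavg}. It also makes the conditional averages nonempty, which can be checked by counting lattice points in the sector $(K_{s,\gamma})_{[m]}$ of radius $(N/\mathcal{N}(m))^{1/2}$. The condition $\eta \le 1/2$ holds for large $N$, and $\lambda \to 0$. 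Hence the whole bound tends to $0$.

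\textbf{Main obstacle.} The point needing care is the uniformity of Lemma \ref{lem:matchingavg}. I must confirm that its error terms depend only on $h$, $\eta$, $\gamma$ and $N/R$, and that cells straddling the arbitrary point $s$ need no special treatment. This is why the decomposition into the $Q_j$ is fixed independently of $s$: the matching lemma handles any angular offset $h$, regardless of how the cells sit relative to the sector boundary.
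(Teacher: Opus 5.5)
Your proposal is correct and follows essentially the same route as the paper. You build a cellwise bijection $S_{1,N}\to S_{2,N}$ from condition \ref{part:2.3b}, apply Lemma \ref{lem:matchingavg} to matched pairs with $h=2\pi/M_N$ and $\eta=\lambda_N$, and bound the logarithmic-weight discrepancy by $O(\lambda_N)$; the paper makes that last bound explicit via $\rho_N^{-2}\le w_1(m)/w_2(\sigma(m))\le\rho_N^2$, giving $\sum_m |w_1(m)-w_2(\sigma(m))|\le \rho_N^2-1$.
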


\subsection{Proofs of Auxiliary Results} 

Throughout this section, fix a sequence $(\gamma_N)_{N \geq 3}$ of positive real numbers satisfying the same assumptions as in Theorem \ref{thm:omegaavg}, namely $\gamma_N \rightarrow 0$ and $\gamma_N^{-1} = N^{o(1)}$ as $N \rightarrow \infty$.

\begin{lemma}\label{lem:countsec}
For every real $N > 0$, uniformly over all sectors $J \subset \Z[i]$, one has
\begin{align}
\sum_{\substack{n \in J \\ \mathcal{N}(n) \leq N}} 1
& = \frac{1}{2} \operatorname{width}(J) N + O(N^{1/2}),
\end{align}
where the implied constant is absolute.
\end{lemma}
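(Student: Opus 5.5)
The plan is the standard lattice-point count by unit squares, with the error coming from squares that meet the boundary of the region. Write $J = K_{s,\gamma}$ with $\gamma = \operatorname{width}(J)$, and let
\begin{align}
D := \{z \in \mathbb{C} \setminus \{0\} : \arg(z) \in [s, s+\gamma),\ |z|^2 \leq N\}.
\end{align}
This is a closed-disk sector of area $\frac{1}{2}\gamma N$, since its radius is $N^{1/2}$. The quantity to count is $|D \cap \Z[i]|$.

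Attach to each Gaussian integer $n$ the unit square $Q_n := n + [-\tfrac12,\tfrac12)^2$. These squares tile the plane. The squares with $n \in D$ have total area equal to the count we want. So the difference between that count and $\operatorname{area}(D)$ is at most the total area of those squares $Q_n$ that meet $\partial D$, whether $n \in D$ or $n \notin D$.

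Every point of such a square lies within distance $\sqrt{2}/2 < 1$ of $\partial D$. Hence all these squares lie in the $1$-neighbourhood of $\partial D$, and the error is bounded by the area of that neighbourhood.

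The boundary $\partial D$ consists of at most two radial segments of length $N^{1/2}$ and one circular arc of length $\gamma N^{1/2} \leq 2\pi N^{1/2}$. When $\gamma = 2\pi$ there are no radial edges. The $1$-neighbourhood of a rectifiable curve of length $L$ has area at most $2L + \pi$. Summing over the at most three pieces gives an error of at most $C(N^{1/2} + 1)$ with an absolute constant $C$, uniformly in $s$ and $\gamma$.

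For $N \geq 1$ this is $O(N^{1/2})$. For $0 < N < 1$ the sum is empty and $\frac12\gamma N \leq \pi N \leq \pi N^{1/2}$, so the estimate also holds trivially.

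There is no real obstacle here. The only point needing care is uniformity over the initial angle and width, including the wrap-around modulo $2\pi$ and the half-open endpoint convention. This is handled because the boundary length bound $(2 + 2\pi)N^{1/2}$ does not depend on $s$ or $\gamma$. Lattice points lying exactly on a boundary ray are already covered by the boundary-square count, so which endpoint is included does not matter.
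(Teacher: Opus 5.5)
Your proof is correct and is essentially the paper's argument. The paper cites the lattice count from the literature and notes that the discrepancy is controlled by a fixed-width neighbourhood of $\partial D$ of length $O(N^{1/2})$, uniformly in $s$ and $\gamma$, with $0<N<1$ treated trivially; you simply write this out. One small correction: a unit square meeting $\partial D$ is only guaranteed to lie within distance $\sqrt{2}$ (its diameter) of $\partial D$, not $\sqrt{2}/2$, so you should use the $\sqrt{2}$-neighbourhood, and when $\gamma = 2\pi$ you should also include the boundary point $0$; both changes affect only the absolute constant.
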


\begin{proof}
See \cite{gaussiandivisor2024} \S2 for this estimate when $N \geq 1$. Although the uniformity is not stated explicitly there, the argument is uniform over the choice of sector: the discrepancy is controlled by a fixed-width neighborhood of the boundary of the truncated sector, whose total length is $O(N^{1/2})$ uniformly in both its position and angular width. For $0 < N < 1$, the counting sum is zero, and the asserted error bound is immediate.
\end{proof}

\begin{lemma}\label{lem:divisorest}
Let $m \in \Z[i]$ be nonzero, and let $J \subset \Z[i]$ be a sector. Write $\gamma := \operatorname{width}(J)$. Then, uniformly over $m$ and $J$, whenever $\gamma N^{1/2}$ is sufficiently large,
\begin{align}
\E_{\substack{n \in J \\ \mathcal{N}(n) \leq N}} \one_{m \mid n}
& = \frac{1}{\mathcal{N}(m)} + O\left(\frac{1}{\gamma \sqrt{N \mathcal{N}(m)}}\right),
\end{align}
where the implied constant is absolute.
\end{lemma}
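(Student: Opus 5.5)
The plan is to reduce divisibility by $m$ to a lattice count in a rotated sector and then apply Lemma \ref{lem:countsec} twice.

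\emph{Reduction to a count.} The map $n' \mapsto m n'$ is a bijection from $\Z[i]\setminus\{0\}$ onto the nonzero multiples of $m$. Moreover, $m n' \in J$ with $\mathcal{N}(mn') \le N$ holds exactly when $n' \in J_{[m]}$ with $\mathcal{N}(n') \le N/\mathcal{N}(m)$. Since $\arg(mn') = \arg(m) + \arg(n')$, the set $J_{[m]}$ is the sector obtained by rotating $J$ by $-\arg(m)$, and it has width $\gamma$. Hence
\begin{align}
\sum_{\substack{n \in J \\ \mathcal{N}(n) \leq N}} \one_{m \mid n} = \sum_{\substack{n' \in J_{[m]} \\ \mathcal{N}(n') \leq N/\mathcal{N}(m)}} 1 = \frac{\gamma N}{2\mathcal{N}(m)} + O\left(\left(\frac{N}{\mathcal{N}(m)}\right)^{1/2}\right),
\end{align}
by Lemma \ref{lem:countsec}. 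That lemma is uniform over all sectors and holds for every real $N/\mathcal{N}(m) > 0$, including values below $1$, so this step is uniform in $m$ and $J$.

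\emph{The denominator.} Lemma \ref{lem:countsec} applied to $J$ gives
\begin{align}
|\{n \in J : \mathcal{N}(n) \le N\}| = \frac{\gamma N}{2}\left(1 + O\left(\frac{1}{\gamma N^{1/2}}\right)\right).
\end{align}
Once $\gamma N^{1/2}$ is larger than a suitable absolute constant, the error term is at most $1/2$. Then the denominator is nonzero and its reciprocal equals $\frac{2}{\gamma N}\bigl(1+O(1/(\gamma N^{1/2}))\bigr)$.

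\emph{Dividing.} Dividing the two counts gives
\begin{align}
\left(\frac{1}{\mathcal{N}(m)} + O\left(\frac{1}{\gamma \sqrt{N\mathcal{N}(m)}}\right)\right)\left(1+O\left(\frac{1}{\gamma N^{1/2}}\right)\right).
\end{align}
Expanding the product leaves the main term $1/\mathcal{N}(m)$ plus three error terms:
\begin{itemize}
\item the first is $O(1/(\gamma\sqrt{N\mathcal{N}(m)}))$, which is already of the desired shape;
\item the second is the cross term $\frac{1}{\mathcal{N}(m)}\cdot\frac{1}{\gamma N^{1/2}}$, which is at most the first, because $\mathcal{N}(m)\ge1$ gives $\mathcal{N}(m)^{-1}\le\mathcal{N}(m)^{-1/2}$;
\item the third is the product of the two errors, which is bounded by the first times $O(1/(\gamma N^{1/2})) = O(1)$.
\end{itemize}
All implied constants are absolute, which gives the claim.

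The only real point is the one in the first step: when $\mathcal{N}(m) > N$ the rescaled count is empty. Lemma \ref{lem:countsec} still covers this case as stated, because it holds for $0 < N < 1$, and in that regime the error term dominates the main term, so the bound remains valid.
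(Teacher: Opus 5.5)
Your proof is correct and follows the paper's argument: substitute $n = m n'$, apply Lemma \ref{lem:countsec} to the rotated sector $J_{[m]}$ (width $\gamma$) and to $J$ itself, divide, and use $\mathcal{N}(m) \geq 1$ to absorb the cross term. Your itemized bookkeeping of the three error terms and your handling of the regime $\mathcal{N}(m) > N$ only make explicit what the paper's one-line proof leaves implicit.
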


\begin{proof}
The substitution $n = m \ell$ and Lemma \ref{lem:countsec}, using $\operatorname{width}(J_{[m]}) = \gamma$, give
\begin{align}
\E_{\substack{n \in J \\ \mathcal{N}(n) \leq N}} \one_{m \mid n}
& = \frac{\gamma N/(2 \N(m)) + O((N/\N(m))^{1/2})}{\gamma N/2 + O(N^{1/2})} 
= \frac{1}{\N(m)} + O\left(\frac{1}{\gamma \sqrt{N \N(m)}}\right),
\end{align}
where the denominator is comparable to $\gamma N$, and the last step uses $\N(m) \geq 1$.
\end{proof}

\begin{lemma}\label{lem:meansquare}
Let $\nu$ be a finitely supported probability measure on $\Z[i] \setminus \{0\}$, and set
\begin{align}
R
& := \max\{\mathcal{N}(m) : m \in \operatorname{supp}(\nu)\}.
\end{align}
Let $J \subset \Z[i]$ be a sector, and write $\gamma := \operatorname{width}(J)$. Then, uniformly over $\nu$ and $J$, whenever $\gamma N^{1/2}$ is sufficiently large,
\begin{align}
\E_{\substack{n \in J \\ \mathcal{N}(n) \leq N}} \left| \sum_m \nu(m) \big(1 - \mathcal{N}(m) \one_{m \mid n}\big) \right|^2
& = \sum_{m, m'} \nu(m) \nu(m') \Phi(m, m') + O\left(\frac{R^{3/2}}{\gamma N^{1/2}}\right).
\end{align}
\end{lemma}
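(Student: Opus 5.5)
Expanding the square turns the mean-square average into three terms: a constant, a linear term in the divisibility indicators, and a quadratic term in them. Lemma \ref{lem:divisorest} will evaluate each indicator average. Writing $A := \E_{n \in J, \mathcal{N}(n) \leq N}$ for the average over the truncated sector, and using that $\nu$ is a probability measure, we have
\begin{align}
\Big|\sum_m \nu(m)\big(1-\mathcal{N}(m)\one_{m\mid n}\big)\Big|^2
= 1 - 2\sum_m \nu(m)\mathcal{N}(m)\one_{m\mid n} + \sum_{m,m'}\nu(m)\nu(m')\mathcal{N}(m)\mathcal{N}(m')\one_{m\mid n}\one_{m'\mid n}.
\end{align}
The expression is real because $\nu$ and $\mathcal{N}$ are real.

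\textbf{Linear term.} Lemma \ref{lem:divisorest} gives $A\,\one_{m\mid n} = \mathcal{N}(m)^{-1} + O(\gamma^{-1}(N\mathcal{N}(m))^{-1/2})$. Multiplying by $\mathcal{N}(m) \leq R$ yields
\begin{align}
A\,\mathcal{N}(m)\one_{m\mid n} = 1 + O\big(\mathcal{N}(m)^{1/2}\gamma^{-1}N^{-1/2}\big) = 1 + O\big(R^{1/2}\gamma^{-1}N^{-1/2}\big).
\end{align}
Summing against $\nu$, the linear term contributes $-2 + O(R^{1/2}/(\gamma N^{1/2}))$.

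\textbf{Quadratic term.} In $\Z[i]$, which is a UFD, we have $m\mid n$ and $m'\mid n$ if and only if $\ell := \lcm(m,m')$ divides $n$. Moreover
\begin{align}
\mathcal{N}(\ell) = \frac{\mathcal{N}(m)\mathcal{N}(m')}{\mathcal{N}(\gcd(m,m'))},
\end{align}
and both sides are independent of the choice of associates. Applying Lemma \ref{lem:divisorest} with $\ell$ in place of $m$ gives
\begin{align}
A\,\mathcal{N}(m)\mathcal{N}(m')\one_{\ell\mid n} = \mathcal{N}(\gcd(m,m')) + O\Big(\frac{\mathcal{N}(m)\mathcal{N}(m')}{\gamma\sqrt{N\mathcal{N}(\ell)}}\Big).
\end{align}
The error is controlled using $\mathcal{N}(\ell) \geq \max(\mathcal{N}(m),\mathcal{N}(m'))$:
\begin{align}
\frac{\mathcal{N}(m)\mathcal{N}(m')}{\sqrt{\mathcal{N}(\ell)}} \leq \frac{\mathcal{N}(m)\mathcal{N}(m')}{\max(\mathcal{N}(m),\mathcal{N}(m'))^{1/2}} \leq R^{3/2}.
\end{align}
Summing against $\nu\otimes\nu$, the quadratic term is $\sum_{m,m'}\nu(m)\nu(m')\mathcal{N}(\gcd(m,m')) + O(R^{3/2}/(\gamma N^{1/2}))$.

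\textbf{Combining.} Adding $1 - 2$ to the quadratic main term and using $\sum_{m,m'}\nu(m)\nu(m') = 1$, the constants become $-1 = -\sum_{m,m'}\nu(m)\nu(m')$. This turns the main term into $\sum\nu\nu\,(\mathcal{N}(\gcd)-1) = \sum\nu\nu\,\Phi$. The linear error is dominated by the quadratic one since $R \geq 1$, which gives the stated $O(R^{3/2}/(\gamma N^{1/2}))$.

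\textbf{Uniformity.} The only delicate point is that Lemma \ref{lem:divisorest} must apply to $\ell$, whose norm can be as large as $R^2$. This is fine because that lemma holds uniformly in the divisor once $\gamma N^{1/2}$ is large, with no restriction relating the divisor to $N$. If $\mathcal{N}(\ell) > N$, the average $A\,\one_{\ell\mid n}$ vanishes and the estimate still holds, since the error bound absorbs $\mathcal{N}(\ell)^{-1}$ in that case. All implied constants come from Lemma \ref{lem:divisorest}, so they are absolute and uniform in $\nu$ and $J$.
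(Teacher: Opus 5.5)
Your proposal is correct and follows essentially the paper's argument. You expand the square, evaluate each divisibility average with Lemma \ref{lem:divisorest} using $\one_{m\mid n}\one_{m'\mid n}=\one_{\lcm(m,m')\mid n}$ and $\mathcal{N}(\lcm(m,m'))=\mathcal{N}(m)\mathcal{N}(m')/\mathcal{N}(\gcd(m,m'))$, and bound the error by $R^{3/2}$. Your error bound via $\mathcal{N}(\ell)\geq\max(\mathcal{N}(m),\mathcal{N}(m'))$ is just an equivalent variant of the paper's $(MM'G)^{1/2}\leq R^{3/2}$.
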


\begin{proof}
Let $E := \{n \in J : \mathcal{N}(n) \leq N\}$. For $m, m' \in \operatorname{supp}(\nu)$, write $M := \mathcal{N}(m)$, $M' := \mathcal{N}(m')$, $G := \mathcal{N}(\gcd(m, m'))$, and $L := \lcm(m, m')$. Since $\one_{m \mid n} \one_{m' \mid n} = \one_{L \mid n}$ and $\mathcal{N}(L) = M M'/G$, expanding the product and applying Lemma \ref{lem:divisorest} to $m$, $m'$, and $L$ gives
\begin{align}
& \E_{n \in E} \big(1 - M \one_{m \mid n}\big) \big(1 - M' \one_{m' \mid n}\big) \\
& = 1 - M \E_{n \in E} \one_{m \mid n} - M' \E_{n \in E} \one_{m' \mid n} + M M' \E_{n \in E} \one_{L \mid n} \\
& = G - 1 + O\left(\frac{M^{1/2} + (M')^{1/2} + (M M' G)^{1/2}}{\gamma N^{1/2}}\right) \\
& = \Phi(m, m') + O\left(\frac{R^{3/2}}{\gamma N^{1/2}}\right),
\end{align}
where the last step uses $M, M', G \leq R$ and $R \geq 1$. The inner sum in the statement is real-valued, so expanding its square and summing the preceding identity with weights $\nu(m) \nu(m')$ proves the result, since $\sum_{m, m'} \nu(m) \nu(m') = 1$.
\end{proof}

\begin{proof}[Proof of Proposition \ref{prop:TuranKubelius}]
Let $E := \{n \in J : \mathcal{N}(n) \leq N\}$. For $m \in \operatorname{supp}(\nu)$, set $M_m := \mathcal{N}(m)$, $d_m := \E_{n \in E} \one_{m \mid n}$, and $b_m := \E_{n \in E} \alpha(n) \one_{m \mid n}$. Lemma \ref{lem:divisorest} gives
\begin{align}
|1 - M_m d_m|
& \lesssim \frac{M_m^{1/2}}{\gamma N^{1/2}}
\leq \frac{R^{1/2}}{\gamma N^{1/2}}.
\end{align}
The hypothesis $\gamma (N/R)^{1/2} \geq c$, with $c$ sufficiently large, therefore gives $d_m \geq (2 M_m)^{-1} > 0$. Thus all conditional averages in the statement are well-defined and equal to $b_m/d_m$.

Since $|\alpha| \leq 1$, Cauchy-Schwarz and Lemma \ref{lem:meansquare} give
\begin{align}
& \left|\E_{n \in E} \alpha(n) - \sum_m \nu(m) M_m b_m\right| \\
& = \left|\E_{n \in E} \alpha(n) \sum_m \nu(m) \big(1 - M_m \one_{m \mid n}\big)\right| \\
& \leq \left(\sum_{m, m'} \nu(m) \nu(m') \Phi(m, m') + C \frac{R^{3/2}}{\gamma N^{1/2}}\right)^{1/2}.
\end{align}
Moreover, $|b_m| \leq d_m$, so the normalization error satisfies
\begin{align}
\left|\frac{b_m}{d_m} - M_m b_m\right|
& = \left|\frac{b_m}{d_m}\right| |1 - M_m d_m| 
\leq |1 - M_m d_m|
\lesssim \frac{R^{1/2}}{\gamma N^{1/2}}.
\end{align}
Averaging the last estimate with respect to $\nu$ and combining it with the preceding Cauchy-Schwarz bound by the triangle inequality proves the proposition.
\end{proof}

We first isolate the geometric comparison underlying the proof.

\begin{proof}[Proof of Lemma \ref{lem:matchingavg}]
For $m, m' \in \Z[i] \setminus \{0\}$, set $X_m := N/\mathcal{N}(m),$ $X_{m'} := \frac{N}{\mathcal{N}(m')},$ and write $X_- := \min\{X_m, X_{m'}\}$ and $X_+ := \max\{X_m, X_{m'}\}$. The assumption on the norms gives $X_+ /X_- \leq e^\eta,$ and therefore, since $\eta \leq 1/2$, we have $X_+ - X_- \lesssim \eta X_-,$ and $X_+ \lesssim X_-.$ Moreover, since $\mathcal{N}(m), \mathcal{N}(m') \leq R$, $X_- \geq \frac{N}{R}.$

For $k \in \{m, m'\}$, define $A_k := \left\{n \in (K_{s, \gamma})_{[k]} : \mathcal{N}(n) \leq X_k\right\}$. The initial angles of these two sectors differ by at most $h$. Hence $A_m \mathbin{\triangle} A_{m'}$ is contained in the union of angular wedges of total width $O(h)$, truncated at norm $X_+$, together with an annular sector of width $\gamma$ whose difference in norm cutoffs is $O(\eta X_-)$. Lemma \ref{lem:countsec} and $X_+ \lesssim X_-$ give
\begin{align}
\big|A_m \mathbin{\triangle} A_{m'}\big|
& \lesssim \big(h + \eta \gamma\big)X_- + X_-^{1/2}.
\end{align}

On the other hand, Lemma \ref{lem:countsec} gives
\begin{align}
|A_m|
& = \frac{\gamma}{2}X_m + O(X_m^{1/2}), & |A_{m'}|
& = \frac{\gamma}{2}X_{m'} + O(X_{m'}^{1/2}).
\end{align}
Since $\gamma (N/R)^{1/2}$ is sufficiently large and $X_- \geq N/R$, it follows that $\min\big\{|A_m|, |A_{m'}|\big\} \gtrsim \gamma X_-.$ In particular, both averaging sets are nonempty.

Since $|g| \leq 1$,
\begin{align}
\left|F_{N, s, \gamma}(m) - F_{N, s, \gamma}(m')\right|
& \leq \frac{2\big|A_m \mathbin{\triangle} A_{m'}\big|}{\min\big\{|A_m|, |A_{m'}|\big\}} \\
& \lesssim \frac{h}{\gamma} + \eta + \frac{1}{\gamma X_-^{1/2}} \\
& \leq \frac{h}{\gamma} + \eta + \frac{1}{\gamma}\left(\frac{N}{R}\right)^{-1/2}.
\end{align}
This proves the lemma.
\end{proof}

We now apply Lemma \ref{lem:matchingavg} to the matched elements supplied by Lemma \ref{lem:prime-semiprime}.

\begin{proof}[Proof of Lemma \ref{lem:uniformcomp} assuming Lemma \ref{lem:prime-semiprime}]
Fix $\varepsilon \in (0, 1)$. For all sufficiently large $N$, let $S_{1,N}$ and $S_{2,N}$ be supplied by Lemma \ref{lem:prime-semiprime}. Set $\omega_N := 2\pi/M_N$ and $R_N := \max_{m \in S_{1,N} \cup S_{2,N}} \mathcal{N}(m)$. By condition \ref{part:2.3b}, matching the elements within each annular-sector cell gives a bijection $\tau_N : S_{1,N} \rightarrow S_{2,N}$ such that matched elements belong to the same cell. Their arguments therefore differ modulo $2\pi$ by at most $\omega_N$, and their logarithmic norms differ by at most $\log \rho_N = \lambda_N$.

Let $|g| \leq 1$, and use the notation $F_{N,s,\gamma}$ from Lemma \ref{lem:matchingavg}. Since $R_N \geq 1$, condition \ref{part:2.3d} implies $\gamma_N (N/R_N)^{1/2} \rightarrow \infty$. Thus, for all sufficiently large $N$, Lemma \ref{lem:matchingavg} gives
\begin{align}
& \sup_{\gamma_N \leq \gamma \leq 2\pi} \sup_{s \in [0, 2\pi)} \max_{m \in S_{1,N}} \left|F_{N,s,\gamma}(m) - F_{N,s,\gamma}(\tau_N(m))\right| \\
& \lesssim \frac{\omega_N}{\gamma_N} + \lambda_N + \frac{R_N^{1/2}}{\gamma_N N^{1/2}}
\longrightarrow 0.
\end{align}
Here $\omega_N/\gamma_N = O(\exp(-\sqrt{\log N}))$, $\lambda_N \rightarrow 0$, and the last term tends to zero by condition \ref{part:2.3d}. The estimate is uniform over all such $g$.

For $i \in \{1, 2\}$, define
\begin{align}
W_i
& := \sum_{a \in S_{i,N}} \mathcal{N}(a)^{-1}, \qquad w_i(m)
:= \frac{\mathcal{N}(m)^{-1}}{W_i}, \quad m \in S_{i,N}.
\end{align}
The radial matching gives $\rho_N^{-1} \leq \mathcal{N}(\tau_N(m))/\mathcal{N}(m) \leq \rho_N$ and, after summing the corresponding reciprocal-norm inequalities, $\rho_N^{-1} \leq W_2/W_1 \leq \rho_N$. Consequently,
\begin{align}
\rho_N^{-2}
& \leq \frac{w_1(m)}{w_2(\tau_N(m))}
= \frac{\mathcal{N}(\tau_N(m))}{\mathcal{N}(m)} \frac{W_2}{W_1}
\leq \rho_N^2,
\end{align}
and hence
\begin{align}
\sum_{m \in S_{1,N}} |w_1(m) - w_2(\tau_N(m))|
& \leq (\rho_N^2 - 1) \sum_{m \in S_{1,N}} w_2(\tau_N(m))
= \rho_N^2 - 1.
\end{align}

For fixed $s$ and $\gamma$, abbreviate $F := F_{N,s,\gamma}$ and set $B_i(N) := \sum_{m \in S_{i,N}} w_i(m) F(m)$. These are the two logarithmic averages in the statement. Since $|F| \leq 1$, the bijection $\tau_N$ and the triangle inequality give
\begin{align}
& |B_1(N) - B_2(N)| \\
& \leq \max_{m \in S_{1,N}} |F(m) - F(\tau_N(m))| + \sum_{m \in S_{1,N}} |w_1(m) - w_2(\tau_N(m))| \\
& \leq \max_{m \in S_{1,N}} |F(m) - F(\tau_N(m))| + \rho_N^2 - 1.
\end{align}
The first term tends to zero uniformly over $s$, $\gamma$, and $g$ by the matching estimate above, while $\rho_N \rightarrow 1$. This proves the lemma.
\end{proof}

\medskip

We now want to understand how to produce condition \ref{part:2.3c} in Lemma \ref{lem:prime-semiprime}.

\begin{lemma}\label{lem2.4}
Let $\varepsilon \in (0, 1)$, let $P \subset \mathcal{P}_{\Z[i]}$ be finite and nonempty, and suppose that
\begin{align}
\sum_{n \in P} \frac{1}{\mathcal{N}(n)}
& \geq \frac{4}{\varepsilon}.
\end{align}
Then
\begin{align}
\E_{n \in P}^{\log} \E_{n' \in P}^{\log} \Phi(n, n')
& \leq \varepsilon.
\end{align}
\end{lemma}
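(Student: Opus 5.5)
Since $P$ consists of primes, $\gcd(n,n')$ is a unit unless $n$ and $n'$ are associates, in which case it is an associate of $n$. Hence $\Phi(n,n') = 0$ when $n,n'$ are not associates, and $\Phi(n,n') = \mathcal{N}(n) - 1 < \mathcal{N}(n)$ when they are. Write $W := \sum_{n \in P} \mathcal{N}(n)^{-1} \geq 4/\varepsilon$. Unwinding the logarithmic averages gives
\begin{align}
\E_{n \in P}^{\log} \E_{n' \in P}^{\log} \Phi(n, n')
& = \frac{1}{W^2} \sum_{n \in P} \frac{1}{\mathcal{N}(n)} \sum_{\substack{n' \in P \\ n' \sim n}} \frac{\Phi(n,n')}{\mathcal{N}(n')}.
\end{align}

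For fixed $n$, the inner sum runs over the associates $n'$ of $n$ lying in $P$. There are at most four of them, each with $\mathcal{N}(n') = \mathcal{N}(n)$. Each term is therefore at most $(\mathcal{N}(n)-1)/\mathcal{N}(n) < 1$, and the inner sum is at most $4$. Summing over $n$ with weights $\mathcal{N}(n)^{-1}$ then gives the bound
\begin{align}
\frac{1}{W^2}\cdot 4W
& = \frac{4}{W}
\leq \varepsilon.
\end{align}

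There is no real obstacle in this argument. The only points needing care are that $\Phi$ vanishes off the associate diagonal, since distinct non-associate primes are coprime, and that associates are counted with multiplicity at most four, which is exactly where the constant $4$ in the hypothesis comes from.
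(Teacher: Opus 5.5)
Your proof is correct and is essentially the paper's argument. Both use that $\Phi$ vanishes off associate pairs and that each prime has at most four associates in $P$, which gives $4W^{-2}\sum_{n \in P}(\mathcal{N}(n)-1)/\mathcal{N}(n)^2 \le 4/W \le \varepsilon$.
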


\begin{proof}
We begin by noting that for Gaussian primes $n, n'$, one has $\Phi(n, n') \neq 0$ if and only if $n$ and $n'$ are associates, and in this case $\Phi(n, n') = \mathcal{N}(n) - 1$. Write $W(P) := \sum_{n \in P}\mathcal{N}(n)^{-1}$. Since every Gaussian prime has at most four associates in $P$,
\begin{align}
\E_{n \in P}^{\log} \E_{n' \in P}^{\log} \Phi(n, n')
& \leq 4 W(P)^{-2} \sum_{n \in P} \frac{\mathcal{N}(n) - 1}{\mathcal{N}(n)^2}
\leq 4 W(P)^{-1}
\leq \varepsilon.
\end{align}
\end{proof}

\begin{lemma}\label{lem2.5}
Let $\varepsilon \in (0, 1)$, and let $P_1, P_2 \subset \mathcal{P}_{\Z[i]}$ be finite and nonempty. Suppose that $P_1 \cap P_2 = \varnothing$ and that no two distinct elements of $P_1 \cup P_2$ are associates. Set
\begin{align}
P'
& := P_1 \cdot P_2
:= \{p q : p \in P_1, \ q \in P_2\}.
\end{align}
If
\begin{align}
\sum_{p \in P_1} \frac{1}{\mathcal{N}(p)}
& \geq \frac{12}{\varepsilon} && \text{ and } & \sum_{q \in P_2} \frac{1}{\mathcal{N}(q)}
& \geq \frac{12}{\varepsilon},
\end{align}
then
\begin{align}
\E_{n \in P'}^{\log} \E_{n' \in P'}^{\log} \Phi(n, n')
& \leq \varepsilon.
\end{align}
\end{lemma}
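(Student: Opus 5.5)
The plan is to compute the double logarithmic average directly. We use unique factorization in $\Z[i]$ to list the only pairs $(n,n')\in P'\times P'$ with $\Phi(n,n')\neq 0$. Then we bound each resulting contribution by the reciprocal of $W_1 := \sum_{p\in P_1}\mathcal{N}(p)^{-1}$, of $W_2 := \sum_{q\in P_2}\mathcal{N}(q)^{-1}$, or of their product. This is the same mechanism as in Lemma \ref{lem2.4}, applied one factor at a time.

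\textbf{Step 1: the product map is injective and the weights factor.} First check that $(p,q)\mapsto pq$ is injective on $P_1\times P_2$. If $pq=p'q'$, unique factorization shows that $\{p,q\}$ and $\{p',q'\}$ agree up to associates. The elements of $P_1$ and $P_2$ are never associates of one another (and distinct elements of $P_1\cup P_2$ are not associates), so this forces $p=p'$ and $q=q'$. Consequently
\begin{align}
\sum_{n\in P'}\mathcal{N}(n)^{-1} = W_1W_2,
\end{align}
and each $n=pq$ carries normalized weight $\mathcal{N}(p)^{-1}\mathcal{N}(q)^{-1}/(W_1W_2)$.

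\textbf{Step 2: the gcd cases.} Take $n=pq$ and $n'=p'q'$. The norm $\mathcal{N}(\gcd(n,n'))$ is the product of the norms of the primes shared up to associates. Under the hypotheses, $p$ can share only with $p'$, and only if $p=p'$; likewise $q$ can share only with $q'$. Hence there are three cases:
\begin{align}
\Phi(n,n') =
\begin{cases}
\mathcal{N}(p)\mathcal{N}(q)-1 & \text{if } (p,q)=(p',q'),\\
\mathcal{N}(p)-1 & \text{if } p=p',\ q\neq q',\\
\mathcal{N}(q)-1 & \text{if } p\neq p',\ q= q',
\end{cases}
\end{align}
and $\Phi(n,n')=0$ otherwise.

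\textbf{Step 3: bounding the three contributions.}
\begin{itemize}
\item The diagonal contributes at most
\begin{align}
(W_1W_2)^{-2}\sum_{p,q}\frac{\mathcal{N}(p)\mathcal{N}(q)}{\mathcal{N}(p)^2\mathcal{N}(q)^2} = \frac{1}{W_1W_2}.
\end{align}
\item The case $p=p'$ contributes at most
\begin{align}
(W_1W_2)^{-2}\sum_p\frac{\mathcal{N}(p)-1}{\mathcal{N}(p)^2}\sum_{q,q'}\frac{1}{\mathcal{N}(q)\mathcal{N}(q')} \le \frac{W_1W_2^2}{W_1^2W_2^2} = \frac{1}{W_1}.
\end{align}
\item Symmetrically, the case $q=q'$ contributes at most $1/W_2$.
\end{itemize}

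With $W_1,W_2\ge 12/\varepsilon$, the total is at most
\begin{align}
\frac{\varepsilon^2}{144}+\frac{\varepsilon}{12}+\frac{\varepsilon}{12}\le\varepsilon,
\end{align}
using $\varepsilon<1$.

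The only delicate point is Step 2. The hypotheses that $P_1\cap P_2=\varnothing$ and that no two distinct elements of $P_1\cup P_2$ are associates are used exactly there, to exclude cross-coincidences such as $p$ being an associate of $q'$. They are also used in Step 1, to ensure that the products are distinct, so the logarithmic weights factor without multiplicities.
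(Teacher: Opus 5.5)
Your proof is correct and follows essentially the same route as the paper. Both arguments use bijectivity of $(p,q)\mapsto pq$ (so the total logarithmic weight is $W_1W_2$), the fact that $\Phi(pq,p'q')\neq 0$ forces $p=p'$ or $q=q'$, and the resulting bound $W_1^{-1}+W_2^{-1}+(W_1W_2)^{-1}<\varepsilon$; your Step 2 simply writes out the case analysis that the paper states in one line.
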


\begin{proof}
Write
\begin{align}
A
& := \sum_{p \in P_1} \frac{1}{\mathcal{N}(p)}, & B
& := \sum_{q \in P_2} \frac{1}{\mathcal{N}(q)}.
\end{align}
The hypotheses on associates imply that the product map $P_1 \times P_2 \rightarrow P'$ is bijective. In particular,
\begin{align}
\sum_{n \in P'} \frac{1}{\mathcal{N}(n)}
& = A B.
\end{align}
For $p, p' \in P_1$ and $q, q' \in P_2$, the quantity $\Phi(p q, p' q')$ can be nonzero only if $p = p'$ or $q = q'$. The contributions from the cases $p = p'$ and $q \neq q'$, $q = q'$ and $p \neq p'$, and $p = p'$ and $q = q'$ are bounded by $A B^2$, $A^2 B$, and $A B$, respectively. Therefore,
\begin{align}
\E_{n \in P'}^{\log} \E_{n' \in P'}^{\log} \Phi(n, n')
& \leq (A B)^{-2} \big(A B^2 + A^2 B + A B\big)   < \varepsilon.
\end{align}
\end{proof}

To better understand condition \ref{part:2.3d} in Lemma \ref{lem:prime-semiprime}, we record a consequence of the prime number theorem in fixed sectors that allows us to obtain arbitrarily large logarithmic weight while keeping all primes below a prescribed small power of $N$.

\begin{lemma}\label{lem:largerecipricalmass}
Let $V \subset \Z[i] \setminus \{0\}$ be a fixed sector of positive width strictly smaller than $\pi/2$, whose angular closure is contained in the interior of one quadrant. With $\lambda_N$ as in Lemma \ref{lem:prime-semiprime}, define, for all sufficiently large $N$,
\begin{align}
X_N & := \exp\Big((\log N)^{1/2}(1 + \log(1/\lambda_N))^{1/2}\Big), \\
Y_N & := \exp\Big((\log N)^{3/4}(1 + \log(1/\lambda_N))^{1/4}\Big), \\
Z_N & := N^{1/4}.
\end{align}
Then, for all sufficiently large $N$, $X_N < Y_N < Z_N,$ and  as $N \rightarrow \infty$,
\begin{align}
\sum_{\substack{p \in \mathcal{P}_{\Z[i]} \cap V \\ X_N < \mathcal{N}(p) < Y_N}} 
\frac{1}{\mathcal{N}(p)}
& \longrightarrow \infty, &
\sum_{\substack{p \in \mathcal{P}_{\Z[i]} \cap V \\ Y_N < \mathcal{N}(p) < Z_N}} \frac{1}{\mathcal{N}(p)}
& \longrightarrow \infty.
\end{align}
\end{lemma}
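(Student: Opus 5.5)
The argument has three steps: check the ordering $X_N<Y_N<Z_N$, convert the reciprocal-norm sums into a difference of $\log\log$ values using the prime number theorem in the fixed sector $V$, and verify that this difference tends to infinity.

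Write $L:=\log N$ and $\ell_N := 1+\log(1/\lambda_N)$. Since $\lambda_N = \gamma_N e^{-\sqrt{L}}$ and $\log(1/\gamma_N) = o(L)$, we get
\begin{align}
\sqrt{L} \leq \ell_N \leq 1 + \sqrt{L} + o(L),
\end{align}
so that $\ell_N = o(L)$ and $\ell_N \to \infty$. Therefore
\begin{align}
\log X_N = L^{1/2}\ell_N^{1/2}, \qquad \log Y_N = L^{3/4}\ell_N^{1/4}, \qquad \log Z_N = L/4 .
\end{align}
From these we read off
\begin{align}
\frac{\log Y_N}{\log X_N} = \left(\frac{L}{\ell_N}\right)^{1/4} \to \infty, \qquad \frac{\log Z_N}{\log Y_N} = \frac{1}{4}\left(\frac{L}{\ell_N}\right)^{1/4} \to \infty .
\end{align}
In particular $X_N<Y_N<Z_N$ for large $N$, and also $X_N\to\infty$ because $\log X_N \geq L^{3/4}$.

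Next I would invoke Hecke's prime number theorem in the fixed sector $V$, which the paper has already cited. If $V$ has width $\beta$, then
\begin{align}
\#\{p \in \mathcal{P}_{\Z[i]}\cap V : \mathcal{N}(p)\le t\} \sim \frac{\beta}{2\pi}\cdot 4\cdot \frac{t}{\log t}.
\end{align}
Only the order of magnitude $\asymp_V t/\log t$ matters for large $t$. Alternatively, one can apply Theorem \ref{thm:stucky} on dyadic ranges with $\delta$ fixed and $\theta$ close to $1$; this gives $\psi(t,t/2;V)\asymp t$, and discarding prime powers is harmless. Partial summation, or summing the dyadic estimates, then yields a Mertens-type lower bound valid for all $a<b$ with $a$ large:
\begin{align}
\sum_{\substack{p\in V\\ a<\mathcal{N}(p)<b}}\frac1{\mathcal{N}(p)} \geq c_V\bigl(\log\log b-\log\log a\bigr)-O_V(1).
\end{align}
For the dyadic version, the primes with $\mathcal{N}(p)\in(t/2,t]$ contribute $\gtrsim 1/\log t$, and summing over dyadic $t$ gives the $\log\log$ growth.

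Finally, I apply this bound with $(a,b)=(X_N,Y_N)$ and with $(a,b)=(Y_N,Z_N)$. The right-hand sides are $c_V\log(\log Y_N/\log X_N)-O(1)$ and $c_V\log(\log Z_N/\log Y_N)-O(1)$. Both are $\tfrac{c_V}{4}\log(L/\ell_N)-O(1)$, which tends to $\infty$ because $\ell_N=o(L)$.

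The only delicate point is the uniformity of the Mertens-type bound: the error terms must not depend on $N$. This holds because $V$ is fixed and $a=X_N\to\infty$. The hypothesis that $V$ has width less than $\pi/2$ and lies inside one quadrant is needed only later and plays no role in this count.
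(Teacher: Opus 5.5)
Your proposal is correct and follows essentially the same route as the paper. You compute $\log Y_N/\log X_N = (L/\ell_N)^{1/4}$ and $\log Z_N/\log Y_N = \tfrac14 (L/\ell_N)^{1/4}$ from $\log(1/\lambda_N) = \sqrt{L} + \log(1/\gamma_N) = o(L)$, and then combine Hecke's prime number theorem in the fixed sector $V$ with partial summation to get a lower bound $c_V \log(\log b/\log a)$ minus a bounded error. The paper's error term is $C_V/\log a$ rather than your $O_V(1)$, but either suffices, and your optional dyadic route via Theorem \ref{thm:stucky} is also valid since $V$ has width less than $\pi/2$.
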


\begin{proof}
We note that
\begin{align}
\frac{\log Y_N}{\log X_N}
= \Big(\frac{\log N}{1 + \log(1/\lambda_N)}\Big)^{1/4}
\rightarrow \infty,
\end{align}
since $\log(1/\lambda_N) = \sqrt{\log N} + \log(1/\gamma_N) = o(\log N)$ (by the hypothesis $\gamma_N^{-1} = N^{o(1)}$). Similarly,
\begin{align}
\frac{\log Z_N}{\log Y_N}
& = \frac{1}{4}\Big(\frac{\log N}{1 + \log(1/\lambda_N)}\Big)^{1/4}
\longrightarrow \infty.
\end{align}
Next, write $\pi_V(t) := \#\{p \in \mathcal{P}_{\Z[i]} \cap V : \mathcal{N}(p) \leq t\}$. By the prime number theorem for fixed sectors (see \cites{hecke-I,hecke-II}), there exist constants $c_V, C_V > 0$ such that, for all sufficiently large $t$,
\begin{align}
c_V \frac{t}{\log t}
& \leq \pi_V(t)
\leq C_V \frac{t}{\log t}.
\end{align}
For $y_2 > y_1$ with $y_1$ sufficiently large, partial summation therefore gives
\begin{align}
\sum_{\substack{p \in \mathcal{P}_{\Z[i]} \cap V \\ y_1 < \mathcal{N}(p) < y_2}} \frac{1}{\mathcal{N}(p)}
& = \frac{\pi_V(y_2^-)}{y_2} - \frac{\pi_V(y_1)}{y_1} + \int_{y_1}^{y_2} \frac{\pi_V(t)}{t^2} \, dt \\
& \geq c_V \int_{y_1}^{y_2} \frac{dt}{t \log t} - \frac{C_V}{\log y_1} \\
& = c_V \log\left(\frac{\log y_2}{\log y_1}\right) - \frac{C_V}{\log y_1},
\end{align}
where $\pi_V(y_2^-)$ counts the Gaussian primes in $V$ with norm strictly smaller than $y_2$. Applying this estimate with $(y_1, y_2) = (X_N, Y_N)$ and $(y_1, y_2) = (Y_N, Z_N)$ proves both required reciprocal-mass divergences, since $X_N, Y_N \rightarrow \infty$ and both logarithmic ratios tend to infinity by the preceding calculations.
\end{proof}

\begin{proof}[Proof of Lemma \ref{lem:prime-semiprime}]
Fix $\varepsilon \in (0, 1)$. For all sufficiently large $N$, partition $\Z[i] \setminus \{0\}$ into $M_N$ sectors $Q_{1, N}, \ldots, Q_{M_N, N}$ of equal angular width $\omega_N := 2\pi/M_N$. Since $\log \rho_N = \lambda_N$ and $M_N = \lfloor \lambda_N^{-1}\rfloor$, we have $\omega_N \simeq \log \rho_N \simeq \lambda_N$. Also, we have
$\rho_N - 1 = \exp(\lambda_N) - 1 \simeq \lambda_N.$

We first record the consequence of Theorem \ref{thm:stucky} that will be used below. Fix a sector $V$ of positive width strictly smaller than $\pi/2$ whose angular closure is contained in the interior of one quadrant, and set $X_N, Y_N, Z_N$ as in Lemma \ref{lem:largerecipricalmass}.

The relation $\lambda_N/\gamma_N = \exp(-\sqrt{\log N}) \rightarrow 0$ ensures that the angular matching occurs on a scale finer than the smallest admissible sector width. The divergent ratios $\log Y_N/\log X_N$ and $\log Z_N/\log Y_N$ provide large reciprocal-norm sums in both prime ranges, as established in Lemma \ref{lem:largerecipricalmass}. Finally, the bound $\rho_N Y_N Z_N = N^{1/4 + o(1)}$ will control the norms of both the semiprimes and their matched primes, ensuring that the error in condition \ref{part:2.3d} tends to zero.

We claim that there exist constants $c, C > 0$ such that, for all sufficiently large $N$, uniformly over $1 \leq j \leq M_N$ and all $x \geq 1$ satisfying $Y_N/\rho_N \leq x \leq Y_N Z_N,$
one has
\begin{align}
c \frac{\omega_N(\rho_N - 1)x}{\log x}
\leq |\mathcal{P}_{\Z[i]} \cap Q_{j, N}[x, \rho_N x]|
\leq C \frac{\omega_N(\rho_N - 1)x}{\log x}.
\label{eq:stucky-cell-count}
\end{align}
Indeed, for $x$ in this range, one has that
\begin{align}
\frac{\log(1/\lambda_N)}{\log x}
\leq \frac{\log(1/\lambda_N)}{\log Y_N - \log \rho_N}
\rightarrow 0
\end{align}
and so uniformly in this range $\lambda_N = x^{-o(1)}$. Set $x' := \rho_N x$ and $y := (\rho_N - 1)x$. The length of the norm interval defining $Q_{j, N}[x, \rho_N x]$ is $(\rho_N - 1)x \simeq \lambda_N x = x^{1 - o(1)}$, while its angular width is $\omega_N \simeq \lambda_N$. Consequently,
\begin{align}
\omega_N y
\simeq x^{1 - o(1)}.
\end{align}
We apply Theorem \ref{thm:stucky} with parameters $(\zeta, x', y, \theta)$, taking $\zeta = 1/10$. Set
\begin{align}
\theta
& := \frac{\log y}{\log x'}
= \frac{\log((\rho_N - 1)x)}{\log(\rho_N x)}
= 1 - o(1),
\end{align}
uniformly over the stated range of $x$. Since $0 < y < x'$ and $\theta \rightarrow 1$ uniformly, we have $7/10 < \theta < 1$ for all sufficiently large $N$. Moreover, the definition gives $y = (x')^\theta$, as required. We may then apply the theorem: since
$\omega_N y \simeq \lambda_N x^{1 - o(1)} \simeq x^{1 - o(1)} \geq (x')^{8/10}$
for sufficiently large $N$, one has that
\begin{align}
\sum_{\substack{x < \mathcal{N}(a) \leq \rho_N x \\ a \in Q_{j, N}}} \Lambda(a)
= (1 + o(1))\frac{2\omega_N (\rho_N - 1)x}{\pi},
\end{align}
uniformly in all parameters. We next pass to the half-open radial convention used in the definition of $Q_{j, N}[x, \rho_N x]$. Replacing $x < \mathcal{N}(a) \leq \rho_N x$ by $x \leq \mathcal{N}(a) < \rho_N x$ changes the sum only on the two boundary circles. Since $\rho_N \rightarrow 1$, these circles contain $O(x^{1/2})$ Gaussian integers in total, and their weighted contribution is $O(x^{1/2}\log x)$, uniformly over the same range of $x$. The contribution of Gaussian prime powers of exponent at least two is $O(x^{1/2}(\log x)^2)$. Both contributions are negligible relative to the main term, since
\begin{align}
\frac{x^{1/2}(\log x)^2}{\omega_N(\rho_N - 1)x}
& = x^{-1/2 + o(1)}
\longrightarrow 0
\end{align}
uniformly in the stated range. Consequently,
\begin{align}
\sum_{p \in \mathcal{P}_{\Z[i]} \cap Q_{j, N}[x, \rho_N x]} \log \mathcal{N}(p)
& = \big(1 + o(1)\big) \frac{2\omega_N(\rho_N - 1)x}{\pi},
\end{align}
uniformly over $j$ and $x$. Every prime in this sum satisfies $\log \mathcal{N}(p) = \log x + O(\lambda_N) = (1 + o(1))\log x$, uniformly over the same parameters. Dividing by $\log x$ therefore gives \eqref{eq:stucky-cell-count}.

Let $L := 12/\varepsilon$. By Lemma \ref{lem:largerecipricalmass}, applied to the fixed sector $V$, we obtain that
\begin{align}
\sum_{\substack{p \in \mathcal{P}_{\Z[i]} \cap V \\ X_N < \mathcal{N}(p) \leq Y_N}} \frac{1}{\mathcal{N}(p)}
\longrightarrow \infty
\qquad \text{ and } 
\sum_{\substack{q \in \mathcal{P}_{\Z[i]} \cap V \\ Y_N < \mathcal{N}(q) \leq Z_N}} \frac{1}{\mathcal{N}(q)}
\longrightarrow \infty.
\end{align}
Consequently, for all sufficiently large $N$, we may choose a finite set
\begin{align}
P_{1, N}
& \subset \{p \in \mathcal{P}_{\Z[i]} \cap V : X_N
< \mathcal{N}(p)
\leq Y_N\}
\end{align}
such that
\begin{align}
L & \leq \sum_{p \in P_{1, N}} \frac{1}{\mathcal{N}(p)}
\leq L + 1.
\label{eq:p1mass}
\end{align}

We next choose $P_{2, N}$ in a locally sparse manner. Set $H_N := \{q \in \mathcal{P}_{\Z[i]} \cap V : Y_N < \mathcal{N}(q) \leq Z_N\},$ and for $1 \leq j \leq M_N$ and $r \in \mathbb{N}_0$ write $H_{j, r} := H_N \cap Q_{j, N}[\rho_N^r, \rho_N^{r + 1}].$ Let $\eta \in (0, 1)$ be fixed independently of $N$, to be chosen sufficiently small in terms of $\varepsilon$ below. For each $j, r$, choose $P_{2, j, r} \subset H_{j, r}$ with $|P_{2, j, r}| = \left\lfloor \eta |H_{j, r}| \right\rfloor,$ and define
\begin{align}
P_{2, N}
& := \bigsqcup_{j, r} P_{2, j, r}.
\end{align}

We claim first that
\begin{align}
\sum_{q \in P_{2, N}} \frac{1}{\mathcal{N}(q)}
& \longrightarrow \infty.
\label{eq:P2-mass-diverges}
\end{align}
Let $\mathcal{J}_N := \{(j, r) : 1 \leq j \leq M_N, \ r \in \mathbb{N}_0, \ H_{j, r} \neq \varnothing\}$ be the set of indices of occupied annular-sector cells, and set $J_N := |\mathcal{J}_N|$. For every $(j, r) \in \mathcal{J}_N$,
\begin{align}
\sum_{q \in P_{2, j, r}} \frac{1}{\mathcal{N}(q)}
& \geq \frac{\lfloor \eta |H_{j, r}| \rfloor}{\rho_N^{r + 1}}
\geq \frac{\eta}{\rho_N} \sum_{q \in H_{j, r}} \frac{1}{\mathcal{N}(q)} - \frac{1}{\rho_N^{r + 1}}.
\label{eq:cellwiseLowerBound}
\end{align}

Every occupied cell satisfies $\rho_N^{r + 1} > Y_N$ and $\rho_N^r \leq Z_N$. Thus the number of possible radial indices is at most $2 + \log(Z_N/Y_N)/\log \rho_N$, and consequently
\begin{align}
J_N
& \leq M_N\left(2 + \frac{\log(Z_N/Y_N)}{\log \rho_N}\right)
= O(\lambda_N^{-2}\log N).
\end{align}
Since $H_N$ is nonempty for all sufficiently large $N$, we have $J_N \geq 1$ in this range. The definition of $Y_N$ and the relation $\log(1/\lambda_N) = o(\log N)$ give
\begin{align}
\log J_N
& = O\big(\log(1/\lambda_N) + \log \log N\big)
= o(\log Y_N).
\end{align}
Since $Y_N \rightarrow \infty$, it follows that $J_N/Y_N \rightarrow 0$.

Summing \eqref{eq:cellwiseLowerBound} only over occupied cells, and using $\rho_N^{r + 1} > Y_N$ for every $(j, r) \in \mathcal{J}_N$, we obtain
\begin{align}
\sum_{q \in P_{2, N}} \frac{1}{\mathcal{N}(q)}
& = \sum_{(j, r) \in \mathcal{J}_N} \sum_{q \in P_{2, j, r}} \frac{1}{\mathcal{N}(q)} \\
& \geq \frac{\eta}{\rho_N} \sum_{q \in H_N} \frac{1}{\mathcal{N}(q)} - \sum_{(j, r) \in \mathcal{J}_N} \frac{1}{\rho_N^{r + 1}} \\
& \geq \frac{\eta}{\rho_N} \sum_{q \in H_N} \frac{1}{\mathcal{N}(q)} - \frac{J_N}{Y_N}.
\end{align}
Since $\eta > 0$ is fixed, $\rho_N \rightarrow 1$, $J_N/Y_N \rightarrow 0$, and the reciprocal-norm sum over $H_N$ tends to infinity, this proves \eqref{eq:P2-mass-diverges}. Thus, after increasing $N$ if necessary,
\begin{align}
\sum_{q \in P_{2, N}} \frac{1}{\mathcal{N}(q)}
& \geq L.
\label{eq:P2-mass}
\end{align}

The construction also gives the required local sparsity. Namely, if $J$ is any sector of width $\omega_N$ and $a > 0$, then whenever $P_{2, N} \cap J[a, \rho_N a] \neq \varnothing,$ the set $J[a, \rho_N a]$ meets at most two of the sectors $Q_{j, N}$ and at most two of the radial intervals $[\rho_N^r, \rho_N^{r + 1})$. Only cells with $H_{j, r} \neq \varnothing$ contribute. For each such cell meeting $J[a, \rho_N a]$, its lower norm endpoint $t := \rho_N^r$ satisfies $Y_N/\rho_N < t \leq Z_N$ and $a/\rho_N < t < \rho_N a$. Thus \eqref{eq:stucky-cell-count} applies to every contributing cell, and its upper bound is at most a constant multiple of $\omega_N(\rho_N - 1)a/\log a$, uniformly over these cells. Therefore,
\begin{align}
\big|P_{2, N} \cap J[a, \rho_N a]\big|
& \leq C \frac{\eta \omega_N(\rho_N - 1) a}{\log a}.
\label{eq:P2-local-sparsity}
\end{align}

The sets $P_{1, N}$ and $P_{2, N}$ are disjoint because their norm ranges are disjoint. Moreover, since both are contained in $V$ and $\operatorname{width}(V) < \pi/2$, no two distinct elements of $P_{1, N} \cup P_{2, N}$ are associates. Define $S_{2, N} := P_{1, N} \cdot P_{2, N}$. The product map $P_{1, N} \times P_{2, N} \rightarrow S_{2, N}$ is bijective, and $S_{2, N} \subset \mathcal{P}_{\Z[i], 2}$. By \eqref{eq:p1mass}, \eqref{eq:P2-mass}, and Lemma \ref{lem2.5},
\begin{align}
\E_{m \in S_{2, N}}^{\log} \E_{m' \in S_{2, N}}^{\log} \Phi(m, m')
& \leq \varepsilon.
\label{eq:S2-Phi-bound}
\end{align}

We now construct $S_{1, N}$. For $1 \leq j \leq M_N$ and $r \in \mathbb{N}_0$, set $T_r^j := S_{2, N} \cap Q_{j, N}[\rho_N^r, \rho_N^{r + 1}].$ Suppose that $T_r^j$ is nonempty, and write $x := \rho_N^r$. For every $p \in P_{1, N}$, the elements $q \in P_{2, N}$ for which $p q \in T_r^j$ satisfy
\begin{align}
q
& \in (Q_{j, N})_{[p]} \left[ \frac{x}{\mathcal{N}(p)}, \frac{\rho_N x}{\mathcal{N}(p)} \right].
\end{align}
Let
\begin{align}
P_{1, N}(j, r)
& := \left\{ p \in P_{1, N} : P_{2, N} \cap (Q_{j, N})_{[p]} \left[ \frac{x}{\mathcal{N}(p)}, \frac{\rho_N x}{\mathcal{N}(p)} \right]
\neq \varnothing \right\}.
\end{align}
Since $(Q_{j, N})_{[p]}$ has width $\omega_N$, \eqref{eq:P2-local-sparsity} gives
\begin{align}
|T_r^j|
& \leq C \eta \omega_N(\rho_N - 1)x \sum_{p \in P_{1, N}(j, r)} \frac{1}{ \mathcal{N}(p)\log(x/\mathcal{N}(p)) }.
\end{align}
For every $p \in P_{1, N}(j, r)$, there exists $q \in P_{2, N}$ with $p q \in T_r^j$. Since $\mathcal{N}(p) \leq Y_N < \mathcal{N}(q)$ and $x \leq \mathcal{N}(p q) < \rho_N x$, we have $\mathcal{N}(p)^2 < \mathcal{N}(p q) < \rho_N x,$ and so, for all sufficiently large $N$,
\begin{align}
\log\left(\frac{x}{\mathcal{N}(p)}\right)
& > \frac{1}{2} \log\left(\frac{x}{\rho_N}\right)
\geq \frac{1}{3} \log x.
\end{align}
Consequently, by \eqref{eq:p1mass},
\begin{align}
|T_r^j|
& \leq 3 C \eta \frac{\omega_N(\rho_N - 1) x}{\log x} \sum_{p \in P_{1, N}(j, r)} \frac{1}{\mathcal{N}(p)} \\
& \leq 3 C \eta (L + 1) \frac{\omega_N(\rho_N - 1)x}{\log x}.
\end{align}

Choose $\eta := \min\{1/2, c/(6 C(L + 1))\}$ so that $|T_r^j| \leq \frac{c}{2}\frac{\omega_N(\rho_N - 1)x}{\log x}$. Since $T_r^j$ is nonempty, we also have $X_N Y_N/\rho_N < x \leq Y_N Z_N.$
Thus \eqref{eq:stucky-cell-count} applies and gives
\begin{align}
\big|\mathcal{P}_{\Z[i]} \cap Q_{j, N}[\rho_N^r, \rho_N^{r + 1}]\big|
& \geq c \frac{\omega_N(\rho_N - 1) x}{\log x}
\geq |T_r^j|.
\end{align}
Thus, for every $j, r$, we may choose $U_r^j \subset \mathcal{P}_{\Z[i]} \cap Q_{j, N}[\rho_N^r, \rho_N^{r + 1}]$ such that $|U_r^j| = |T_r^j|$. Let
\begin{align}
S_{1, N}
:= \bigsqcup_{j, r} U_r^j.
\end{align}
By construction, for every $1 \leq j \leq M_N$ and every $r \in \mathbb{N}_0$,
\begin{align}
\big|S_{1, N} \cap Q_{j, N}[\rho_N^r, \rho_N^{r + 1}]\big|
& = \big|S_{2, N} \cap Q_{j, N}[\rho_N^r, \rho_N^{r + 1}]\big|.
\end{align}
This proves condition \ref{part:2.3b}.

It remains to verify conditions \ref{part:2.3c} and \ref{part:2.3d} for $S_{1, N}$. If $U_r^j$ and $T_r^j$ are corresponding cells, then
\begin{align}
\sum_{m \in U_r^j} \frac{1}{\mathcal{N}(m)}
& \geq \frac{1}{\rho_N} \sum_{m \in T_r^j} \frac{1}{\mathcal{N}(m)}.
\end{align}
Hence
\begin{align}
\sum_{m \in S_{1, N}} \frac{1}{\mathcal{N}(m)}
& \geq \frac{1}{\rho_N} \sum_{m \in S_{2, N}} \frac{1}{\mathcal{N}(m)} \\
& = \frac{1}{\rho_N} \left( \sum_{p \in P_{1, N}} \frac{1}{\mathcal{N}(p)} \right) \left( \sum_{q \in P_{2, N}} \frac{1}{\mathcal{N}(q)} \right) \\
& \geq \frac{L^2}{\rho_N}.
\end{align}
Since $\rho_N \rightarrow 1$, for all sufficiently large $N$,
\begin{align}
\frac{L^2}{\rho_N}
& \geq \frac{72}{\varepsilon^2}
\geq \frac{4}{\varepsilon}.
\end{align}
Lemma \ref{lem2.4} therefore gives
\begin{align}
\E_{m \in S_{1, N}}^{\log} \E_{m' \in S_{1, N}}^{\log} \Phi(m, m')
& \leq \varepsilon.
\end{align}
Together with \eqref{eq:S2-Phi-bound}, this proves condition \ref{part:2.3c}.

Finally, every element of $S_{2, N}$ has norm at most $Y_N Z_N$, and every element of $S_{1, N}$ belongs to the same radial cell as some element of $S_{2, N}$. Hence, if
\begin{align}
R_N
& := \max_{m \in S_{1, N} \cup S_{2, N}} \mathcal{N}(m),
\end{align}
then
\begin{align}
R_N
\leq \rho_N Y_N Z_N
= \rho_N N^{1/4}\exp\Big((\log N)^{3/4}(1 + \log(1/\lambda_N))^{1/4}\Big).
\end{align}
Consequently, $R_N = O(N^{1/4 + o(1)})$, and we may bound
\begin{align}
\frac{R_N^{3/2}}{\gamma_N N^{1/2}}
\leq N^{-1/8 + o(1)}
\rightarrow 0.
\end{align}
This proves condition \ref{part:2.3d} and completes the proof.
\end{proof}

\subsection{Shift Invariance}
\begin{proof}[Proof of Theorem \ref{thm:omegaavg}]
Let $(\gamma_N)$ satisfy the hypotheses of the theorem. By Lemma \ref{lem:countsec}, uniformly for $s \in [0, 2\pi)$ and $\gamma_N \leq \gamma \leq 2\pi$,
\begin{align}
|K_{s,\gamma}(N)|
& = \frac{\gamma N}{2} + O(N^{1/2}).
\end{align}
Since $\gamma_N N^{1/2}  = N^{1/2 - o(1)} \to \infty,$ we have $|K_{s,\gamma}(N)| \geq \gamma N/4 > 0$ for all sufficiently large $N$, uniformly over $s$ and $\gamma$.

Let $\mathcal{B}$ be the set of all functions $b : \mathbb{N}_0 \rightarrow \mathbb{C}$ satisfying $\|b\|_\infty \leq 1$. For $b \in \mathcal{B}$, set
\begin{align}
A_{N,s,\gamma}(b)
& := \E_{n \in K_{s,\gamma}(N)} b(\Omega(n)), &
A_{N,s,\gamma}^+(b)
& := \E_{n \in K_{s,\gamma}(N)} b(\Omega(n) + 1).
\end{align}
We first establish uniform asymptotic invariance over all such functions. Namely, we claim that
\begin{align}
\Delta_N
& := \sup_{\gamma_N \leq \gamma \leq 2\pi} \sup_{s \in [0, 2\pi)} \sup_{b \in \mathcal{B}} |A_{N,s,\gamma}^+(b) - A_{N,s,\gamma}(b)|
\longrightarrow 0.
\label{eq:uniformArithmeticShift}
\end{align}

Fix $\varepsilon \in (0, 1)$, and for all sufficiently large $N$ let $S_{1,N}$ and $S_{2,N}$ be the sets supplied by Lemma \ref{lem:prime-semiprime}. These sets are chosen independently of $s$, $\gamma$, and $b$. Write
\begin{align}
R_N
& := \max_{m \in S_{1,N} \cup S_{2,N}} \mathcal{N}(m).
\end{align}
Since $R_N \geq 1$, condition \ref{part:2.3d} of Lemma \ref{lem:prime-semiprime} gives $\gamma_N (N/R_N)^{1/2} \rightarrow \infty$ and makes both $R_N$-dependent errors in Proposition \ref{prop:TuranKubelius} tend to zero uniformly for $\gamma_N \leq \gamma \leq 2\pi$. Thus the proposition applies uniformly for all sufficiently large $N$, and all conditional averages below are well-defined.

For $b \in \mathcal{B}$, define
\begin{align}
B_{1,N,s,\gamma}(b)
& := \E_{m \in S_{1,N}}^{\log} \E_{\substack{n \in K_{s,\gamma}(N) \\ m \mid n}} b(\Omega(n) + 1), &
B_{2,N,s,\gamma}(b)
& := \E_{m' \in S_{2,N}}^{\log} \E_{\substack{n \in K_{s,\gamma}(N) \\ m' \mid n}} b(\Omega(n)).
\end{align}
Apply Proposition \ref{prop:TuranKubelius} with the probability measures
\begin{align}
\nu_{i,N}(m)
& := \frac{\one_{m \in S_{i,N}}}{\mathcal{N}(m) \displaystyle \sum_{a \in S_{i,N}} \mathcal{N}(a)^{-1}}, \qquad i \in \{1, 2\},
\end{align}
defined for $m \in \Z[i] \setminus \{0\}$. Using condition \ref{part:2.3c} of Lemma \ref{lem:prime-semiprime} and the preceding error estimates gives
\begin{align}
\limsup_{N \rightarrow \infty} \sup_{\gamma_N \leq \gamma \leq 2\pi} \sup_{s \in [0, 2\pi)} \sup_{b \in \mathcal{B}} |A_{N,s,\gamma}^+(b) - B_{1,N,s,\gamma}(b)|
& \leq \varepsilon^{1/2}, \\ \limsup_{N \rightarrow \infty} \sup_{\gamma_N \leq \gamma \leq 2\pi} \sup_{s \in [0, 2\pi)} \sup_{b \in \mathcal{B}} |A_{N,s,\gamma}(b) - B_{2,N,s,\gamma}(b)|
& \leq \varepsilon^{1/2}.
\end{align}
The supremum over $b$ is permissible because the estimate in Proposition \ref{prop:TuranKubelius} is uniform over all arithmetic functions bounded in absolute value by $1$.

For $b \in \mathcal{B}$, define $g_b : \Z[i] \setminus \{0\} \rightarrow \mathbb{C}$ by $g_b(\ell) := b(\Omega(\ell) + 2).$
Then $|g_b| \leq 1$ uniformly over $b \in \mathcal{B}$. By condition \ref{part:2.3a} of Lemma \ref{lem:prime-semiprime}, every element of $S_{1,N}$ is a Gaussian prime and every element of $S_{2,N}$ is a Gaussian semiprime. Hence complete additivity of $\Omega$ and the substitution $n = m \ell$ give
\begin{align}
B_{1,N,s,\gamma}(b)
& = \E_{m \in S_{1,N}}^{\log} \E_{\substack{\ell \in (K_{s,\gamma})_{[m]} \\ \mathcal{N}(\ell) \leq N/\mathcal{N}(m)}} g_b(\ell), &
B_{2,N,s,\gamma}(b)
& = \E_{m' \in S_{2,N}}^{\log} \E_{\substack{\ell \in (K_{s,\gamma})_{[m']} \\ \mathcal{N}(\ell) \leq N/\mathcal{N}(m')}} g_b(\ell).
\end{align}
The estimate in Lemma \ref{lem:uniformcomp} is uniform over all functions bounded in absolute value by $1$. Therefore,
\begin{align}
\lim_{N \rightarrow \infty} \sup_{\gamma_N \leq \gamma \leq 2\pi} \sup_{s \in [0, 2\pi)} \sup_{b \in \mathcal{B}} |B_{1,N,s,\gamma}(b) - B_{2,N,s,\gamma}(b)|
& = 0.
\end{align}
Combining these estimates with the triangle inequality gives
\begin{align}
\limsup_{N \rightarrow \infty} \Delta_N
& \leq 2 \varepsilon^{1/2}.
\end{align}
Since $\varepsilon$ is arbitrary, this proves \eqref{eq:uniformArithmeticShift}.

For a fixed admissible sector, write $p_k := p(K_{s,\gamma}(N), k)$. By $\ell^\infty$-$\ell^1$ duality,
\begin{align}
\sup_{b \in \mathcal{B}} |A_{N,s,\gamma}^+(b) - A_{N,s,\gamma}(b)|
& = p_0 + |p_0 - p_1| + \sum_{k \geq 1} |p_k - p_{k + 1}|.
\end{align}
Hence $\sum_{k \geq 1} |p_k - p_{k + 1}| \leq \Delta_N$ uniformly over the admissible sectors, and \eqref{eq:uniformArithmeticShift} proves the theorem.
\end{proof}
\medskip 

\begin{proof}[Proof of Theorem \ref{thm:additivesec}]
Fix $f \in C(X)$, and let $(\gamma_N)$ satisfy the hypotheses of the theorem. Let $\Delta_N$ be as in \eqref{eq:uniformArithmeticShift}, so that $\Delta_N \rightarrow 0$ by the proof of Theorem \ref{thm:omegaavg}.

For $h \in C(X)$ with $h \neq 0$ and $x \in X$, define $b_x(k) := h(T^k x)/\|h\|_\infty$ for $k \in \mathbb{N}_0$. Since $\|b_x\|_\infty \leq 1$, the definition of $\Delta_N$ gives, for all sufficiently large $N$,
\begin{align}
\sup_{\substack{\gamma_N \leq \gamma \leq 2\pi \\ s \in [0, 2\pi) \\ x \in X}} \left|\E_{n \in K_{s,\gamma}(N)} \big(h(T^{\Omega(n) + 1} x) - h(T^{\Omega(n)} x)\big)\right|
& \leq \|h\|_\infty \Delta_N
\longrightarrow 0.
\label{sec2eq2}
\end{align}
For $h = 0$, the same conclusion is immediate.

We now prove the asserted uniform convergence. Suppose, for a contradiction, that it fails. Then there exist $\eta > 0$, a sequence $N_j \rightarrow \infty$, angles $s_j \in [0, 2\pi)$, widths
$\gamma_{N_j} \leq \beta_j \leq 2\pi,$
and points $x_j \in X$ such that
\begin{align}
\left|\E_{n \in K_{s_j,\beta_j}(N_j)} f(T^{\Omega(n)} x_j) - \int_X f \, d\mu\right|
& \geq \eta
\end{align}
for every $j \in \mathbb{N}$. Define the Borel probability measures
\begin{align}
\mu_j
& := \E_{n \in K_{s_j,\beta_j}(N_j)} \delta_{T^{\Omega(n)} x_j},
\end{align}
where $\delta_z$ denotes the Dirac probability measure at $z$.

Since $X$ is compact and metrizable, the space of Borel probability measures on $X$ is sequentially compact in the weak-$*$ topology. After passing to a subsequence, we may therefore suppose that $\mu_j \xrightarrow{w^*} \nu$ for some Borel probability measure $\nu$ on $X$.

Let $h \in C(X)$. Since the chosen widths satisfy $\beta_j \geq \gamma_{N_j}$, \eqref{sec2eq2} gives
\begin{align}
\left|\int_X h \circ T \, d\mu_j - \int_X h \, d\mu_j\right|
& = \left|\E_{n \in K_{s_j,\beta_j}(N_j)} \big(h(T^{\Omega(n) + 1} x_j) - h(T^{\Omega(n)} x_j)\big)\right|
\longrightarrow 0.
\end{align}
Because $T$ is continuous, $h \circ T \in C(X)$. Passing to the weak-$*$ limit therefore gives
\begin{align}
\int_X h \circ T \, d\nu
& = \int_X h \, d\nu.
\end{align}
Thus $\nu$ is $T$-invariant. By unique ergodicity, $\nu = \mu$. It follows that
\begin{align}
\E_{n \in K_{s_j,\beta_j}(N_j)} f(T^{\Omega(n)} x_j)
& = \int_X f \, d\mu_j \\
& \longrightarrow \int_X f \, d\nu
= \int_X f \, d\mu,
\end{align}
contradicting the choice of $N_j$, $s_j$, $\beta_j$, and $x_j$. Therefore,
\begin{align}
\sup_{\gamma_N \leq \gamma \leq 2\pi} \sup_{s \in [0, 2\pi)} \sup_{x \in X} \left|\E_{n \in K_{s,\gamma}(N)} f(T^{\Omega(n)} x) - \int_X f \, d\mu\right|
& \longrightarrow 0,
\end{align}
as required.
\end{proof}

\section{Proof of the Multiplicative Results}

We begin with a characterization of pretended invariance for finitely generated actions.

\begin{lemma}\label{lem:pretendedInvariance}
Let $(Y, S)$ be a finitely generated multiplicative topological dynamical system, let $R_1, \ldots, R_d$ be its distinct generators, and set $P_j := \{p \in \mathcal{P}_{\Z[i]} : S_p = R_j\}$ for $j \in \{1, \ldots, d\}$. A Borel probability measure $\nu$ on $Y$ pretends to be invariant under $S$ if and only if it is invariant under every $R_j$ for which
\begin{align}
\sum_{p \in P_j} \frac{1}{\mathcal{N}(p)}
& = \infty.
\end{align}
\end{lemma}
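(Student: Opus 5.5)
The statement is a bookkeeping fact about which sets $P$ can witness pretended invariance, and I will prove the two directions separately. Throughout, let $D := \{j : \sum_{p \in P_j} \mathcal{N}(p)^{-1} = \infty\}$ and $F := \{1, \ldots, d\} \setminus D$. The sets $P_1, \ldots, P_d$ partition $\mathcal{P}_{\Z[i]}$, because each prime $p$ has $S_p$ equal to exactly one generator $R_j$.

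\emph{Sufficiency.} Suppose $\nu$ is invariant under $R_j$ for every $j \in D$. Take $P := \bigcup_{j \in D} P_j$. For each $p \in P$, the map $S_p$ is one of the $R_j$ with $j \in D$, so $\nu$ is $S_p$-invariant. The complement is $\mathcal{P}_{\Z[i]} \setminus P = \bigcup_{j \in F} P_j$, a finite union of sets with convergent reciprocal-norm sums. Hence $\sum_{p \notin P} \mathcal{N}(p)^{-1} < \infty$, and $\nu$ pretends to be invariant. This covers the edge case $D = \varnothing$ as well: then $P = \varnothing$ is admissible, since the full sum over $\mathcal{P}_{\Z[i]}$ is then finite (a finite union of convergent pieces). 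That situation cannot actually arise, because $\sum_{p} \mathcal{N}(p)^{-1}$ diverges over Gaussian primes, but the argument does not need this.

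\emph{Necessity.} Suppose $\nu$ pretends to be invariant, with witnessing set $P$ satisfying $\sum_{p \notin P} \mathcal{N}(p)^{-1} < \infty$. Fix $j \in D$. If $P \cap P_j = \varnothing$, then $P_j \subset \mathcal{P}_{\Z[i]} \setminus P$. This would force $\sum_{p \in P_j} \mathcal{N}(p)^{-1} < \infty$, contradicting $j \in D$. So there is some $p \in P \cap P_j$. For that $p$, $\nu$ is invariant under $S_p = R_j$. Since $j \in D$ was arbitrary, $\nu$ is invariant under every such $R_j$.

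No step here is a genuine obstacle. The only point needing care is the observation that each prime belongs to exactly one $P_j$, which is immediate from the definition of the generators as the distinct values of $S_p$. Nothing about associates or angular distribution is needed.
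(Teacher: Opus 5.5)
Your proof is correct and follows the paper's argument essentially verbatim. For necessity, a class $P_j$ with divergent reciprocal-norm sum cannot lie inside the complement of a witnessing set $P$, since that complement has convergent sum. For sufficiency, you take $P$ to be the union of the divergent classes $P_j$; your aside on the case where no class diverges is harmless but unnecessary.
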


\begin{proof}
Suppose that $\nu$ pretends to be invariant under $S$. Choose $P \subset \mathcal{P}_{\Z[i]}$ such that $\sum_{p \in \mathcal{P}_{\Z[i]} \setminus P} \mathcal{N}(p)^{-1} < \infty$ and $\nu$ is invariant under $S_p$ for every $p \in P$. If the reciprocal-norm sum over $P_j$ diverges, then $P_j \cap P \neq \varnothing$, since the sum over $P_j \setminus P$ converges. Choosing $p \in P_j \cap P$ gives $S_p = R_j$, so $\nu$ is invariant under $R_j$.

Conversely, suppose that $\nu$ is invariant under every $R_j$ for which the reciprocal-norm sum over $P_j$ diverges, and let $P$ be the union of these sets $P_j$. Then $\nu$ is invariant under $S_p$ for every $p \in P$. Since the sets $P_1, \ldots, P_d$ partition $\mathcal{P}_{\Z[i]}$, the complement of $P$ is a finite union of sets with convergent reciprocal-norm sums. Thus $P$ witnesses pretended invariance.
\end{proof}

We record the following special case of \cite{DLMS2024}*{Theorem E}, which supplies the full-disk convergence used below.

\begin{theorem}\label{thm:fixedmultiplicativesec}
Let $(Y, S)$ be a finitely generated and strongly uniquely ergodic multiplicative topological dynamical system, and let $\nu$ be the unique Borel probability measure on $Y$ that pretends to be invariant under $S$. Then, for every $g \in C(Y)$ and every $y \in Y$,
\begin{align}
\E_{n \in B_N} g(S_n y)
& \longrightarrow \int_Y g \, d\nu,
\qquad N \rightarrow \infty.
\end{align}
\end{theorem}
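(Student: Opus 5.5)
The plan is to deduce this statement directly from \cite{DLMS2024}*{Theorem E}, specialised to the dilated F{\o}lner sequence of disks. No new arithmetic input is needed. The work is in checking that the hypotheses and conventions of Theorem E agree with the definitions of the introduction, so that its conclusion can be read off for the sets $B_N$.

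\textbf{Step 1: hypotheses.} I would check that a multiplicative topological dynamical system here is the same object as in \cite{DLMS2024}. It is an action of the semigroup $(\Z[i]\setminus\{0\},\cdot)$ by continuous maps of a compact metric space, with $S_1=\operatorname{id}_Y$ and $S_{nm}=S_n\circ S_m$. Finite generation means $\{S_p : p\in\mathcal{P}_{\Z[i]}\}$ is finite.

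\textbf{Step 2: pretended invariance.} Our $\mathcal{P}_{\Z[i]}$ contains all associates. If \cite{DLMS2024} instead sums $\mathcal{N}(p)^{-1}$ over primes up to associates, or over prime ideals, the two sums differ by a factor of at most $4$, so convergence of the exceptional sum is unaffected. By Lemma \ref{lem:pretendedInvariance}, both notions reduce to invariance under the generators $R_j$ whose prime classes $P_j$ have divergent reciprocal-norm sums. Hence the uniquely determined measure $\nu$ is the same in both frameworks.

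\textbf{Step 3: averaging sets.} Let $D$ be the closed unit disk. Then $B_N=(\sqrt N D\cap\Z[i])\setminus\{0\}$. Removing the origin changes a set of size $\pi N+O(N^{1/2})$ (Lemma \ref{lem:countsec}) by one point, so it alters the average by $O(\|g\|_\infty/N)$. The dilates $\sqrt N D$ form exactly the kind of dilated F{\o}lner sequence covered by Theorem E: $D$ is a fixed region with boundary of zero measure. Taking the parameter along the real sequence $\sqrt N$ rather than along integers is harmless, since Theorem E allows arbitrary dilation factors tending to infinity; if it were stated only for integer dilations, a monotone sandwich using Lemma \ref{lem:countsec} would close the gap.

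Theorem E then gives $\E_{n\in B_N} g(S_n y)\to\int_Y g\,d\nu$ for every $g\in C(Y)$ and every $y\in Y$. The pointwise conclusion at every point, rather than almost everywhere, is part of Theorem E under strong unique ergodicity.

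\textbf{Main obstacle: a mismatch of conventions with \cite{DLMS2024}.} The most delicate possibility is that \cite{DLMS2024} formulates the action on ideals, or on elements modulo units. In that case I would split according to units. Write each $n\in B_N$ as $n=u\,n_0$ with $u\in\{\pm1,\pm i\}$ and $n_0$ in a fixed fundamental domain for the unit action, say the sector $K_{0,\pi/2}$. Then $S_n y=S_u(S_{n_0}y)$, and $B_N$ is the disjoint union of the four rotated copies $uK_{0,\pi/2}(N)$. Applying the fixed-sector case of Theorem E to each quarter sector, with the continuous function $g\circ S_u$, and averaging over $u$ gives the full-disk statement. This uses that $\nu$ is invariant under $S_u$, which I would verify from the pretended-invariance characterisation.
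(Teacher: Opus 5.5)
Your main argument is exactly the paper's: it uses Lemma~\ref{lem:pretendedInvariance} to identify $\nu$ as the unique measure invariant under the generators with divergent reciprocal-norm sums, then applies \cite{DLMS2024}*{Theorem E} to the dilated unit disk, and your checks on the origin and on real versus integer dilation factors are harmless extra care. The unit-splitting fallback is unnecessary, since the paper uses Theorem E directly for actions of $(\Z[i]\setminus\{0\},\cdot)$ by elements; as written it would also not handle an ideal formulation, because $n_0\mapsto S_{n_0}$ on the fundamental domain $K_{0,\pi/2}$ is not multiplicative (e.g.\ $(1+i)^2=2i\notin K_{0,\pi/2}$), so one would instead need a multiplicative normalization by fixed prime representatives, as in the proof of Theorem~\ref{thm:sector-disk}.
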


\begin{proof}
By Lemma \ref{lem:pretendedInvariance}, $\nu$ is the unique Borel probability measure invariant under every generator $R_j$ for which
\begin{align}
\sum_{p \in P_j} \frac{1}{\mathcal{N}(p)}
& = \infty.
\end{align}
Since $B_N$ is obtained by intersecting $\Z[i] \setminus {0}$ with a dilation of the unit disk, the result follows directly from \cite{DLMS2024}*{Theorem E}.
\end{proof}

\subsection{Angular Cancellation Estimates}

\begin{lemma}\label{lem:phasesep}
Let $D \geq 1$ be an integer, and set $\delta_D := 1 - \operatorname{sinc}(\pi/D) > 0$. For any $z_1, \ldots, z_D \in \mathbb{C}$ satisfying $|z_j| = 1$,
\begin{align}
\frac{1}{2\pi} \int_0^{2\pi} \min_{1 \leq j \leq D} \big(1 - \operatorname{Re}(z_j e^{i \theta})\big) \, d\theta
& \geq \delta_D.
\end{align}
\end{lemma}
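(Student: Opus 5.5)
Write $z_j = e^{i\phi_j}$, so that $1 - \operatorname{Re}(z_j e^{i\theta}) = 1 - \cos(\theta + \phi_j)$. The integrand $\min_j(1-\cos(\theta+\phi_j))$ is $1-\cos$ of the distance from $\theta$ to the nearest point of the finite set $\{-\phi_1,\dots,-\phi_D\}$ in $\mathbb{T}_{\mathrm{ang}}$. The plan is to show that this average is smallest when the $D$ points are equally spaced, and then to compute the equally spaced case exactly.

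\textbf{Arc decomposition.} Order the points $-\phi_j$ around the circle; repetitions only lower the number of distinct points, and that case is absorbed below. They cut the circle into arcs of lengths $\ell_1,\dots,\ell_k$, with $k\le D$ and $\sum \ell_i = 2\pi$. On an arc of length $\ell$, every $\theta$ is nearest to one of the two endpoints, and the distance $t$ to the nearest endpoint ranges over $[0,\ell/2]$ twice. The contribution of that arc is therefore
\begin{align}
G(\ell) := 2\int_0^{\ell/2}(1-\cos t)\,dt = \ell - 2\sin(\ell/2).
\end{align}
So the quantity to bound is $\frac{1}{2\pi}\sum_i G(\ell_i)$.

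\textbf{Convexity.} On $[0,2\pi]$ we have $G''(\ell) = \tfrac12\sin(\ell/2) \ge 0$, so $G$ is convex. Jensen's inequality with fixed $k$ gives $\sum_i G(\ell_i) \ge k\,G(2\pi/k)$.

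\textbf{Equal-spacing value.} Compute
\begin{align}
\frac{k}{2\pi}\,G(2\pi/k) = 1 - \frac{k}{\pi}\sin(\pi/k) = 1 - \operatorname{sinc}(\pi/k).
\end{align}
Since $\operatorname{sinc}$ is decreasing on $[0,\pi]$, the quantity $1-\operatorname{sinc}(\pi/k)$ is decreasing in $k$. It is therefore minimized at $k=D$, which gives the bound $\delta_D$. The case of coincident $z_j$ (so $k<D$) is covered by this same monotonicity.

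\textbf{Positivity and edge cases.} We have $\delta_D > 0$ because $\sin x < x$ for $x\in(0,\pi]$. For $D=1$ there is a single arc of length $2\pi$ with both endpoints at the same point; the formula still holds and gives $\delta_1 = 1$, matching $\frac{1}{2\pi}\int_0^{2\pi}(1-\cos\theta)\,d\theta = 1$.

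The only step needing care is the arc decomposition itself, namely confirming that the nearest-point structure on each arc really yields $G(\ell)$. This holds because the nearest point to any $\theta$ in an arc is one of that arc's two endpoints. Beyond that the argument is routine.
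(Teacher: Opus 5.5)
Your proof is correct and is essentially the paper's argument: the same gap decomposition with nearest-endpoint distance $\min\{t,\ell-t\}$, followed by Jensen. Your convexity of $\ell-2\sin(\ell/2)$ is exactly the paper's concavity of $\sin(\ell/2)$. The only cosmetic difference is that the paper allows zero-length gaps, so Jensen applies directly with $D$ terms; you instead pass to $k\le D$ distinct points and invoke the monotonicity of $\operatorname{sinc}$ on $(0,\pi]$, which is equally valid.
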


\begin{proof}
Write $z_j = e^{-i \phi_j}$ with $\phi_j \in [0, 2\pi)$. After relabeling, assume $\phi_1 \leq \cdots \leq \phi_D$, allowing repeated phases. Set $\phi_{D + 1} := \phi_1 + 2\pi$ and $\ell_j := \phi_{j + 1} - \phi_j$ for $j \in \{1, \ldots, D\}$. Then each $\ell_j \in [0, 2\pi]$ and $\sum_{j = 1}^{D} \ell_j = 2\pi$.

Define $M(\theta) := \max_{1 \leq k \leq D} \cos(\theta - \phi_k)$. On the arc between consecutive phases $\phi_j$ and $\phi_{j + 1}$, a closest phase in circular distance is one of the endpoints. Since cosine is decreasing on $[0, \pi]$, for $t \in [0, \ell_j]$ we have
$M(\phi_j + t)  = \cos\big(\min\{t, \ell_j - t\}\big).$
This identity also holds when $\ell_j = 0$. Integrating over the gap therefore gives
\begin{align}
\int_{\phi_j}^{\phi_{j + 1}} M(\theta) \, d\theta
& = 2 \int_0^{\ell_j / 2} \cos t \, dt
= 2 \sin\left(\frac{\ell_j}{2}\right).
\end{align}
Using the $2\pi$-periodicity of $M$, summing over the gaps, and applying concavity of sine on $[0, \pi]$, we obtain
\begin{align}
\frac{1}{2\pi} \int_0^{2\pi} M(\theta) \, d\theta
& = \frac{1}{\pi} \sum_{j = 1}^{D} \sin\left(\frac{\ell_j}{2}\right) \\
& \leq \frac{D}{\pi} \sin\left(\frac{1}{D} \sum_{j = 1}^{D} \frac{\ell_j}{2}\right)
= \operatorname{sinc}\left(\frac{\pi}{D}\right).
\end{align}
Consequently,
\begin{align}
\frac{1}{2\pi} \int_0^{2\pi} \min_{1 \leq j \leq D} \big(1 - \operatorname{Re}(z_j e^{i \theta})\big) \, d\theta
& = 1 - \frac{1}{2\pi} \int_0^{2\pi} M(\theta) \, d\theta \\
& \geq 1 -  \operatorname{sinc}\left(\frac{\pi}{D}\right)
= \delta_D.
\end{align}
Finally, $\delta_D > 0$ because $\sin t < t$ for $t \in (0, \pi]$. This completes the proof.
\end{proof}

\begin{lemma}\label{lem:primesumlowerbd}
For every integer $D \geq 1$, set $\kappa_D := (1-\operatorname{sinc}(\pi/D))/32 $.
There exists a constant $N_D \geq 3$, depending only on $D$, with the following property. Let $a : \mathcal{P}_{\Z[i]} \rightarrow \mathbb{C}$ satisfy $|a(p)| = 1$ for every $p \in \mathcal{P}_{\Z[i]}$ and $\#\{a(p) : p \in \mathcal{P}_{\Z[i]}\} \leq D$. Then, for every $N \geq N_D$,
\begin{align}
\frac{1}{4} \sum_{\substack{p \in \mathcal{P}_{\Z[i]} \\ \mathcal{N}(p) \leq N}} \frac{1 - \operatorname{Re}\big(a(p) e^{i h \arg(p)} \mathcal{N}(p)^{-i t}\big)}{\mathcal{N}(p)}
& \geq \kappa_D \log \log N,
\end{align}
uniformly over all such $a$, all $h \in \Z$ satisfying $1 \leq |h| \leq \lfloor(\log N)^2\rfloor$, and all $t \in \R$ satisfying $|t| \leq \log N$.
\end{lemma}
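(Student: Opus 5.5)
The plan is to localize the sum to fine annular-sector cells on which the twist $e^{i h \arg(p)} \mathcal{N}(p)^{-it}$ is essentially constant. On each cell I would bound the summand below by the pointwise minimum over the at most $D$ possible prime values, count the primes in each cell with Theorem \ref{thm:stucky}, and then integrate over angle using Lemma \ref{lem:phasesep}. This is the ``minimize before counting'' strategy announced in the introduction. Because the minimum over the $D$ values does not depend on which primes receive which value, the resulting bound is automatically uniform in $a$.

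\emph{Setup.} Write the possible values of $a$ as $z_1, \ldots, z_D$, repeating values if fewer occur. Set $\psi(p) := h \arg(p) - t \log \mathcal{N}(p)$. Since $a(p) \in \{z_1, \ldots, z_D\}$,
\begin{align}
1 - \operatorname{Re}\big(a(p) e^{i\psi(p)}\big) \geq \Theta(\psi(p)), \qquad \Theta(\psi) := \min_{1 \leq j \leq D}\big(1 - \operatorname{Re}(z_j e^{i\psi})\big),
\end{align}
and $\Theta$ is $1$-Lipschitz in $\psi$ because each $\psi \mapsto \operatorname{Re}(z_j e^{i\psi})$ is. The summand is nonnegative, so I may discard all primes of norm below $T_N := \exp((\log N)^{1/2})$. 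On the range $T_N \leq \mathcal{N}(p) \leq N$ I use the following parameters.
\begin{itemize}
\item Angular cells: $K_N \simeq (\log N)^3$ sectors $Q_j$ of equal width $\omega = 2\pi/K_N$.
\item Radial cells: annuli $[\rho^r, \rho^{r+1})$ with $\log \rho = (\log N)^{-2}$.
\end{itemize}
Since $|h| \leq (\log N)^2$ and $|t| \leq \log N$, the phase $\psi$ varies within a single cell by at most $|h|\omega + |t|\log\rho = O((\log N)^{-1})$. Hence, on each cell, $\Theta(\psi(p)) \geq \Theta(\psi_{j,r}) - O((\log N)^{-1})$, where $\psi_{j,r}$ is the phase at a reference point of the cell.

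\emph{Counting.} I would apply Theorem \ref{thm:stucky} to each cell exactly as in the derivation of \eqref{eq:stucky-cell-count}. Take $x' = \rho^{r+1}$, $y = (\rho - 1)\rho^r$ and $\delta = \omega$, and note that $\delta y = x^{1 - o(1)} \geq (x')^{8/10}$ uniformly for $x \geq T_N$, since $\log\log N = o(\log T_N)$. This gives
\begin{align}
\sum_{p \in Q_j[\rho^r, \rho^{r+1}]} \frac{1}{\mathcal{N}(p)} = (1 + o(1))\frac{2\omega\log\rho}{\pi \log \rho^r}
\end{align}
uniformly in $j$ and $r$, after removing prime powers and boundary circles as before. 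For a fixed $r$, I then sum over $j$. The map $j \mapsto \psi_{j,r}$ is a Riemann sum for $\theta \mapsto \Theta(h\theta - t r\log\rho)$ with mesh $\omega$, and this function has Lipschitz constant at most $|h|$. So the Riemann-sum error is $O(|h|\omega) = o(1)$, and
\begin{align}
\sum_j \frac{\omega}{2\pi}\,\Theta(\psi_{j,r}) \geq \frac{1}{2\pi}\int_0^{2\pi}\Theta(h\theta - c_r)\,d\theta - o(1).
\end{align}
Because $|h| \geq 1$ is an integer, the substitution $\theta \mapsto h\theta$ covers the circle $|h|$ times, so the integral equals the full-circle average of $\Theta$. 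Lemma \ref{lem:phasesep} bounds that average below by $\delta_D$.

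\emph{Assembling the bound.} Each annulus therefore contributes at least $(4 - o(1))\,\delta_D \log\rho/\log\rho^r$. Summing over $r$ with $T_N \leq \rho^r \leq N$ gives
\begin{align}
(4 - o(1))\,\delta_D \int_{\log T_N}^{\log N}\frac{du}{u} = (4 - o(1))\,\delta_D \cdot \tfrac{1}{2}\log\log N.
\end{align}
After the prefactor $\frac14$, the left-hand side is at least $(\tfrac12 - o(1))\,\delta_D \log\log N \geq \kappa_D \log\log N$ for $N \geq N_D$, with a large margin since $\kappa_D = \delta_D/32$.

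\emph{Main obstacle.} The delicate step is uniformity. The $o(1)$ terms coming from Theorem \ref{thm:stucky} must be uniform over all cells simultaneously, which holds because $\theta = \log y/\log x' \to 1$ uniformly on $x \geq T_N$. The Lipschitz and Riemann-sum errors must be uniform in $h$, $t$ and $a$, which the choice of cell scales secures. The threshold $N_D$ is needed only to absorb the $o(1)$ losses into the factor-$16$ slack between $\delta_D/2$ and $\kappa_D$.
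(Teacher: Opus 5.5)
Your proposal is correct and follows essentially the same route as the paper: you bound the summand below by the minimum over the at most $D$ prime values, count primes in fine angular cells with Theorem \ref{thm:stucky}, control the discretization through the $|h|$-Lipschitz bound, and finish with Lemma \ref{lem:phasesep}. The differences are only in bookkeeping. The paper uses short intervals $(x - x^{9/10}, x]$ with the fixed exponent $\theta = 9/10$ and middle halves of the angular cells, and integrates against $x^{-19/10}\,dx$ to recover the reciprocal norms; you instead use a $\rho$-adic radial partition that relies on the uniformity as $\theta \to 1$ already used in the proof of Lemma \ref{lem:prime-semiprime}, which gives the stronger constant $\delta_D/2$ in place of $\delta_D/32$.
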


\begin{proof}
Fix $D \geq 1$, and set $\delta_D := 1 - \operatorname{sinc}(\pi/D) > 0$. List the values of $a$ as $z_1, \ldots, z_D$, repeating values if necessary, and define
\begin{align}
G(v) & := \min_{1 \leq j \leq D} 
    \big(1 - \operatorname{Re}(z_j e^{i v})\big).
\end{align}
The function $G$ is $2\pi$-periodic, takes values in $[0, 2]$, and is $1$-Lipschitz. By Lemma \ref{lem:phasesep}, periodicity, and a change of variables, for every nonzero integer $h$ and every $\beta \in \R$,
\begin{align}
\frac{1}{2\pi} \int_0^{2\pi} G(h \theta + \beta) \, d\theta
& = \frac{1}{2\pi} \int_0^{2\pi} G(v) \, dv
\geq \delta_D.
\label{eq:prime-distance-angular-mean}
\end{align}

For $N$ sufficiently large, put $L := \log N$, $Q := L^{100}$, $M := 4 \lceil L^4\rceil$, and $\omega := 2\pi/M$. In particular, $Q < N$. All estimates below are uniform over the admissible choices of $a$, $h$, and $t$.

For $j \in \{0, \ldots, M - 1\}$, set $I_j := [j \omega, (j + 1) \omega]$ and $J_j := [(j + 1/4) \omega, (j + 3/4) \omega)$. Thus $J_j$ is the middle half of $I_j$. Since $M$ is divisible by $4$, the closure of each $J_j$ lies in the interior of one quadrant.

For $Q \leq x \leq N$, define $\mathcal{P}_j(x) := \{p \in \mathcal{P}_{\Z[i]} : x - x^{9/10} < \mathcal{N}(p) \leq x,\ \arg(p) \in J_j\}$. The angular width of $J_j$ is $\omega/2$, and
\begin{align}
\frac{(\omega/2) x^{9/10}}{x^{7/10 + 1/20}}
& = \frac{\omega}{2} x^{3/20}
\gtrsim L^{-4} Q^{3/20}
= L^{11}
\longrightarrow \infty.
\end{align}
Therefore Theorem \ref{thm:stucky}, with the fixed choices $\theta = 9/10$ and $\zeta = 1/20$, applies uniformly over $Q \leq x \leq N$ and all $j$, after rotating by a unit when necessary.

We remove the contribution of prime powers of exponent at least two from the weighted sum in that theorem. Their total contribution up to norm $x$ is $O(x^{1/2}(\log x)^2)$: there are $O(\log x)$ possible exponents, for each exponent at least two there are $O(x^{1/2})$ possible underlying prime ideals, and each weight is at most $\log x$. The ideal-counting bound follows from Lemma \ref{lem:countsec}, since every nonzero ideal is principal and has exactly four generators. Using $(\log x)^2 \lesssim x^{1/10}$, we have
\begin{align}
\frac{x^{1/2}(\log x)^2}{\omega x^{9/10}}
& \lesssim L^4 x^{-3/10}
\leq L^4 Q^{-3/10}
= L^{-26}
\longrightarrow 0.
\end{align}
Hence this contribution is negligible uniformly in the stated range. Since each $J_j$ has width less than $\pi/2$, each prime ideal has at most one generator with argument in $J_j$. The main term in Theorem \ref{thm:stucky} is $\omega x^{9/10}/\pi$, so, for all sufficiently large $N$,
\begin{align}
\#\mathcal{P}_j(x)
& \geq \frac{1}{\log x} \sum_{p \in \mathcal{P}_j(x)} \log \mathcal{N}(p)
\geq \frac{\omega x^{9/10}}{2\pi \log x},
\label{eq:prime-distance-cell-count}
\end{align}
uniformly over $Q \leq x \leq N$ and all $j$.

Fix an integer $h$ with $1 \leq |h| \leq \lfloor L^2\rfloor$ and a real number $t$ with $|t| \leq L$. For every $p \in \mathcal{P}_{\Z[i]}$, write
\begin{align}
A(p)
& := 1 - \operatorname{Re}\big(a(p) e^{i h \arg(p)} \mathcal{N}(p)^{-i t}\big).
\end{align}
By the definition of $G$, $A(p) \geq G\big(h \arg(p) - t \log \mathcal{N}(p)\big)
\geq 0.$

Set $\varepsilon_N := 2 L^{-9}$. If $Q \leq x \leq N$ and $x - x^{9/10} < \mathcal{N}(p) \leq x$,  for sufficiently large $N$,
\begin{align}
|t| \big|\log \mathcal{N}(p) - \log x\big|
& \leq -L \log(1 - x^{-1/10})
\leq 2 L x^{-1/10}
\leq 2 L Q^{-1/10}
= \varepsilon_N.
\end{align}
For each $j$ and $x$, define
\begin{align}
b_j(x)
& := \max\left\{0, \inf_{\theta \in I_j} G(h \theta - t \log x) - \varepsilon_N\right\}.
\end{align}
The Lipschitz bound for $G$ shows that $A(p) \geq b_j(x)$ whenever $p \in \mathcal{P}_j(x)$.

Moreover, $\theta \mapsto G(h \theta - t \log x)$ is $|h|$-Lipschitz. Integrating over each $I_j$ and using \eqref{eq:prime-distance-angular-mean}, we obtain
\begin{align}
\omega \sum_{j = 0}^{M - 1} b_j(x)
& \geq \int_0^{2\pi} G(h \theta - t \log x) \, d\theta - 2\pi \big(|h| \omega + \varepsilon_N\big) \\
& \geq 2\pi \delta_D - 2\pi \big(L^2 \omega + \varepsilon_N\big)
\geq \pi \delta_D
\end{align}
for all sufficiently large $N$ depending only on $D$, since $L^2 \omega + \varepsilon_N \rightarrow 0$.

The sets $\mathcal{P}_j(x)$ are pairwise disjoint, and all $A(p)$ are nonnegative. Consequently, \eqref{eq:prime-distance-cell-count} gives
\begin{align}
S(x)
& := \frac{1}{4} \sum_{\substack{p \in \mathcal{P}_{\Z[i]} \\ x - x^{9/10} < \mathcal{N}(p) \leq x}} A(p) \\
& \geq \frac{1}{4} \sum_{j = 0}^{M - 1} b_j(x) \#\mathcal{P}_j(x) \\
& \geq \frac{x^{9/10}}{8\pi \log x}\,\omega \sum_{j = 0}^{M - 1} b_j(x)
\geq \frac{\delta_D}{8} \frac{x^{9/10}}{\log x},
\label{eq:prime-distance-short-annulus}
\end{align}
uniformly over $Q \leq x \leq N$.

For $m > 0$, define
\begin{align}
W_N(m)
& := \int_Q^N \frac{\one_{x - x^{9/10} < m \leq x}}{x^{19/10}} \, dx.
\end{align}
For a contributing $x$, we have $x^{9/10} \leq x/2$ once $N$ is sufficiently large. Thus $m \leq x < 2 m$ and $0 \leq x - m < (2 m)^{9/10}$. It follows that
\begin{align}
W_N(m)
& \leq \frac{(2 m)^{9/10}}{m^{19/10}}
\leq \frac{2}{m}.
\end{align}
Integrating \eqref{eq:prime-distance-short-annulus} against $x^{-19/10}\,dx$ and interchanging the finite sum and the integral gives
\begin{align}
\frac{\delta_D}{8} \int_Q^N \frac{dx}{x \log x}
& \leq \int_Q^N \frac{S(x)}{x^{19/10}} \, dx \\
& = \frac{1}{4} \sum_{\substack{p \in \mathcal{P}_{\Z[i]} \\ \mathcal{N}(p) \leq N}} A(p) W_N(\mathcal{N}(p)) \\
& \leq 2 \left(\frac{1}{4} \sum_{\substack{p \in \mathcal{P}_{\Z[i]} \\ \mathcal{N}(p) \leq N}} \frac{A(p)}{\mathcal{N}(p)}\right).
\end{align}
Since $Q = (\log N)^{100}$, we conclude that
\begin{align}
\frac{1}{4} \sum_{\substack{p \in \mathcal{P}_{\Z[i]} \\ \mathcal{N}(p) \leq N}} \frac{A(p)}{\mathcal{N}(p)}
& \geq \frac{\delta_D}{16} \log\left(\frac{\log N}{\log Q}\right) \\
& = \frac{\delta_D}{16} \big(\log \log N - \log(100 \log \log N)\big) \\
& \geq \frac{\delta_D}{32} \log \log N
\end{align}
for all sufficiently large $N$.

Every threshold used above depends at most on $D$, and none depends on the values or assignment of $a$, or on the admissible parameters $h$ and $t$. Taking $\kappa_D := \delta_D/32$ and choosing $N_D$ sufficiently large completes the proof.
\end{proof}

\begin{lemma}\label{lem:halaszbd}
Let $a : \Z[i] \setminus \{0\} \rightarrow \mathbb{C}$ be completely multiplicative, with $|a(n)| = 1$ for every $n \neq 0$. Let $h \in \Z$ and $N \geq 3$.

If $a(i) i^h \neq 1$, then
\begin{align}
\E_{n \in B_N} a(n) e^{i h \arg(n)}
& = 0.
\end{align}
If $a(i) i^h = 1$, define
\begin{align}
M_{a,h}(N)
& := \min_{|t| \leq \log N} \frac{1}{4} \sum_{\substack{p \in \mathcal{P}_{\Z[i]} \\ \mathcal{N}(p) \leq N}} \frac{1 - \operatorname{Re}\big(a(p) e^{i h \arg(p)} \mathcal{N}(p)^{-i t}\big)}{\mathcal{N}(p)}.
\end{align}
Then
\begin{align}
\left|\E_{n \in B_N} a(n) e^{i h \arg(n)}\right|
& \lesssim \big(1 + M_{a,h}(N)\big) e^{-M_{a,h}(N)} + \frac{\log \log N}{\log N},
\end{align}
with an absolute implied constant.
\end{lemma}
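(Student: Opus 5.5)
The plan has two parts, following the two cases in the statement. The first case is a symmetry argument. The second reduces to the quantitative Gaussian-ideal Hal\'asz theorem of Ku\'s \cite{Kus2026}*{Theorem 1.1}.

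\emph{Case $a(i) i^h \neq 1$.} The disk $B_N$ is invariant under multiplication by $i$, which preserves norms and shifts arguments by $\pi/2$. Writing $F(n) := a(n) e^{i h \arg(n)}$, complete multiplicativity gives
\begin{align}
F(i n) = a(i)\, a(n)\, e^{i h (\arg(n) + \pi/2)} = a(i) i^h F(n).
\end{align}
Since $n \mapsto i n$ is a bijection of $B_N$, the sum $\Sigma := \sum_{n \in B_N} F(n)$ satisfies $\Sigma = a(i) i^h \Sigma$. Hence $\Sigma = 0$.

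\emph{Case $a(i) i^h = 1$.} Here $F$ is constant on associate classes, because $F(u n) = F(n)$ for every unit $u$. It is also completely multiplicative on $\Z[i] \setminus \{0\}$, since $\arg$ is additive modulo $2\pi$ and $e^{i h \cdot}$ is well defined on $\mathbb{T}_{\mathrm{ang}}$. Therefore $F$ descends to a completely multiplicative unimodular function $\tilde F$ on nonzero ideals, with $\tilde F((n)) := F(n)$. Every nonzero ideal has exactly four generators, so
\begin{align}
\E_{n \in B_N} F(n) = \frac{\sum_{\mathcal{N}\mathfrak{a} \leq N} \tilde F(\mathfrak{a})}{\#\{\mathfrak{a} : \mathcal{N}\mathfrak{a} \leq N\}},
\end{align}
and the ideal count is $\asymp N$ by Lemma \ref{lem:countsec}. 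Each prime ideal $\mathfrak{p}$ with $\mathcal{N}\mathfrak{p} \leq N$ corresponds to exactly four associate prime elements, all with the same value of $F$. Hence the prime-ideal pretentious distance $\sum_{\mathcal{N}\mathfrak{p} \leq N} (1 - \operatorname{Re}(\tilde F(\mathfrak{p}) \mathcal{N}\mathfrak{p}^{-it}))/\mathcal{N}\mathfrak{p}$ equals the element sum with the factor $1/4$ that appears in $M_{a,h}(N)$.

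It remains to apply Ku\'s's quantitative Hal\'asz bound (the Montgomery--Tenenbaum form) for Gaussian ideals to $\tilde F$, with $T = \log N$. It should give
\begin{align}
\frac{1}{N} \Big|\sum_{\mathcal{N}\mathfrak{a} \leq N} \tilde F(\mathfrak{a})\Big| \lesssim (1 + M) e^{-M} + \frac{1}{T},
\end{align}
where $M$ is the minimum over $|t| \leq T$ of the distance above. With $T = \log N$, the term $1/T$ is absorbed into $\log\log N/\log N$.

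The main obstacle is matching the normalization of Ku\'s's theorem to ours. I would check three points in his formulation: whether the minimum runs over $|t| \leq T$ or over a range depending on $\log N$; whether his sum over prime ideals includes prime powers; and what form the additive error term takes. Any mismatch, such as including prime-power terms or using $\log\log$-type endpoint adjustments, changes the distance only by $O(1)$. That changes $(1 + M) e^{-M}$ by at most a constant factor and is absorbed into the implied constant. If his error term is instead $\log\log N/\log N$, it matches the statement exactly.
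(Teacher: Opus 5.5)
Your proposal is correct and follows essentially the same route as the paper. The paper kills the average by reindexing under $n \mapsto in$ when $a(i)i^h \neq 1$. Otherwise it descends the associate-invariant function $n \mapsto a(n)e^{ih\arg(n)}$ to a completely multiplicative function on Gaussian ideals, notes that the four-generator count makes the ideal prime-distance exactly $M_{a,h}(N)$, and then applies Ku\'s's Theorem 1.1 together with $|B_N| \simeq N$ from Lemma \ref{lem:countsec}.
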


\begin{proof}
Reindexing the disk sum by $n \mapsto i n$ proves the first assertion. In the second case, the function $n \mapsto a(n) e^{i h \arg(n)}$ is invariant under associates and therefore defines a completely multiplicative function on the nonzero ideals of $\Z[i]$. Each such ideal has exactly four generators, so its prime-distance parameter is precisely $M_{a,h}(N)$. Apply \cite{Kus2026}*{Theorem 1.1} and use $|B_N| \simeq N$, which follows from Lemma \ref{lem:countsec}.
\end{proof}

\subsection{Sector-Disk Comparison}
We now combine the auxiliary lemmas to obtain the following angular cancellation estimate, which is the main arithmetic input in the proof of Theorem \ref{thm:sector-disk}.

\begin{proposition}\label{prop:angularcancellation}
For every integer $D \geq 1$, set $c_D := \big(1 - \operatorname{sinc}(\pi/D)\big)/64$.
There exist constants $C_D > 0$ and $N_D \geq 3$, depending only on $D$, with the following property. Let $a : \Z[i] \setminus \{0\} \rightarrow \mathbb{C}$ be completely multiplicative, with $|a(n)| = 1$ for every $n \neq 0$, and suppose 
\begin{align}
\#\{a(p) : p \in \mathcal{P}_{\Z[i]}\}
& \leq D.
\end{align}
Then, for every $N \geq N_D$,
\begin{align}
\max_{\substack{h \in \Z \\ 1 \leq |h| \leq \lfloor(\log N)^2\rfloor}} \left|\E_{n \in B_N} a(n) e^{i h \arg(n)}\right|
& \leq C_D (\log N)^{-c_D}.
\end{align}
\end{proposition}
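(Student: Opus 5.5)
Fix $D$, $a$, $N$, and $h$ with $1 \leq |h| \leq \lfloor(\log N)^2\rfloor$. We split according to the dichotomy in Lemma \ref{lem:halaszbd}.

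If $a(i) i^h \neq 1$, the first assertion of Lemma \ref{lem:halaszbd} shows that the average vanishes, so there is nothing to prove.

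If $a(i) i^h = 1$, we apply the second assertion of Lemma \ref{lem:halaszbd}, which reduces the problem to a lower bound for $M_{a,h}(N)$. Here we invoke Lemma \ref{lem:primesumlowerbd}. Its hypotheses hold: $a$ restricted to $\mathcal{P}_{\Z[i]}$ is unimodular and takes at most $D$ values; $h$ lies in the admissible range; and the minimum defining $M_{a,h}(N)$ runs over $|t| \leq \log N$, which is exactly the range covered by that lemma. Hence, for $N \geq N_D$, uniformly in $a$, $h$, and $t$,
\begin{align}
M_{a,h}(N) \geq \kappa_D \log\log N = 2 c_D \log\log N,
\end{align}
because $\kappa_D = (1-\operatorname{sinc}(\pi/D))/32 = 2c_D$.

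Next we insert this bound into Lemma \ref{lem:halaszbd}. The function $u \mapsto (1+u)e^{-u}$ is decreasing for $u \geq 0$, so
\begin{align}
(1+M_{a,h}(N))e^{-M_{a,h}(N)} \leq (1+2c_D\log\log N)(\log N)^{-2c_D}.
\end{align}
Since $1 + 2c_D\log\log N \leq C'_D(\log N)^{c_D}$, this term is at most $C'_D(\log N)^{-c_D}$. The remaining term $\log\log N/\log N$ is also $\lesssim_D (\log N)^{-c_D}$, because $c_D < 1$. Combining the two terms gives the bound $C_D(\log N)^{-c_D}$ with the absolute constant from Lemma \ref{lem:halaszbd} absorbed into $C_D$. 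This bound is uniform in $a$ and $h$, so taking the maximum over $h$ proves the proposition.

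The only real content is the uniformity of Lemma \ref{lem:primesumlowerbd} over all $a$, $h$, and $t$, and that lemma already provides it. The main point to check is therefore the bookkeeping: the $t$-range in $M_{a,h}(N)$ must match the range in Lemma \ref{lem:primesumlowerbd}, and the factor of two lost to the polynomial prefactor $(1+M_{a,h}(N))$ is what explains why $c_D = \kappa_D/2$. We take $N_D$ to be the threshold from Lemma \ref{lem:primesumlowerbd}, enlarged if necessary so that the elementary inequalities above hold.
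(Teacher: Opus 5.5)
Your proposal is correct and follows the paper's own proof: you split on whether $a(i)i^h = 1$, use Lemma \ref{lem:primesumlowerbd} to get $M_{a,h}(N) \geq \kappa_D \log\log N$, substitute this into Lemma \ref{lem:halaszbd}, and absorb the factor $(1+\log\log N)$ by halving the exponent, $c_D = \kappa_D/2$. Your remark that $u \mapsto (1+u)e^{-u}$ is decreasing on $u \geq 0$ justifies the substitution of the lower bound, a step the paper leaves implicit.
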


\begin{proof}
Let $\kappa_D$ be as in Lemma \ref{lem:primesumlowerbd}, and fix an integer $h$ with $1 \leq |h| \leq \lfloor(\log N)^2\rfloor$. If $a(i) i^h \neq 1$, the average vanishes by Lemma \ref{lem:halaszbd}. Otherwise, Lemma \ref{lem:primesumlowerbd} gives $M_{a,h}(N) \geq \kappa_D \log \log N$ for all sufficiently large $N$ depending only on $D$. Lemma \ref{lem:halaszbd} then yields
\begin{align}
\left|\E_{n \in B_N} a(n) e^{i h \arg(n)}\right|
& \lesssim_D (1 + \log \log N) (\log N)^{-\kappa_D} + \frac{\log \log N}{\log N} \\
& \lesssim_D (\log N)^{-c_D},
\end{align}
since $c_D = \kappa_D/2$ and $0 < \kappa_D \leq 1/32$. All bounds are uniform over the admissible functions $a$ and integers $h$, proving the proposition.
\end{proof}

\begin{proof}[Proof of Theorem \ref{thm:sector-disk}]
Let $d_0 := \#\{S_p : p \in \mathcal{P}_{\Z[i]}\}$, so that $1 \leq d_0 \leq d$, and enumerate the distinct prime transformations as $R_1, \ldots, R_{d_0}$. Put $D := 4d$ and $c := c_D = 2 A_d$, where $c_D$ is the exponent defined in Proposition \ref{prop:angularcancellation}. In particular, $0 < c \leq 1/2$ and $0 < A_d \leq 1/4$. For $N \geq 3$, write $H_N := \lfloor(\log N)^2\rfloor.$
Throughout the proof, the admissible sectors are those satisfying $s \in [0, 2\pi)$ and $(\log N)^{-A_d} \leq \gamma \leq 2\pi$.

Let $\mathcal{A}_D$ denote the collection of completely multiplicative functions $a : \Z[i] \setminus \{0\} \rightarrow \mathbb{C}$ satisfying $|a(n)| = 1$ for every $n \neq 0$ and taking at most $D$ distinct values on $\mathcal{P}_{\Z[i]}$. Proposition \ref{prop:angularcancellation} and the choice $c \leq c_D$ give
\begin{align}
\sup_{a \in \mathcal{A}_D} \max_{\substack{h \in \Z \\ 1 \leq |h| \leq H_N}} \left|\E_{n \in B_N} a(n) e^{i h \arg(n)}\right|
& \lesssim_D (\log N)^{-c}.
\label{eq:multi-angular-cancellation}
\end{align}

For an integer $H \geq 1$, let
\begin{align}
\mathcal{F}_H(t)
& := \frac{1}{H + 1} \left|\sum_{k = 0}^{H} e^{i k t}\right|^2.
\end{align}
This normalized Fej\'er kernel is nonnegative, has $m_{\mathrm{ang}}$-integral $1$, and satisfies
\begin{align}
\int_{\mathbb{T}_{\mathrm{ang}}} |t| \mathcal{F}_H(t) \, dm_{\mathrm{ang}}(t)
& \lesssim \frac{\log(H + 2)}{H + 1},
\label{eq:multi-fejer-moment}
\end{align}
where $t$ is represented in $[-\pi, \pi]$. For $I := [s, s + \gamma)$, set $\alpha_I := \gamma/(2\pi)$ and $P_{I,H} := \mathcal{F}_H * \one_I$, with convolution taken with respect to $m_{\mathrm{ang}}$. This polynomial has degree at most $H$, and its Fourier coefficients satisfy
\begin{align}
\widehat{P}_{I,H}(0)
& = \alpha_I, \qquad \sum_{1 \leq |h| \leq H} |\widehat{P}_{I,H}(h)|
\lesssim \log(H + 2).
\label{eq:multi-fejer-coefficients}
\end{align}

The symmetric difference $I \mathbin{\triangle} (I + t)$ is a union of at most two angular intervals with total length at most $2|t|$. Lemma \ref{lem:countsec}, applied to those intervals and to the full disk, therefore gives
\begin{align}
\E_{n \in B_N} \left|\one_I(\arg(n)) - \one_I(\arg(n) - t)\right|
& \lesssim |t| + N^{-1/2},
\end{align}
uniformly over $I$ and $t \in [-\pi, \pi]$. Positivity of $\mathcal{F}_H$ and \eqref{eq:multi-fejer-moment} imply
\begin{align}
\E_{n \in B_N} & \left|\one_I(\arg(n)) - P_{I,H}(\arg(n))\right| \\
& \leq \int_{\mathbb{T}_{\mathrm{ang}}} \mathcal{F}_H(t) \E_{n \in B_N} \left|\one_I(\arg(n)) - \one_I(\arg(n) - t)\right| \, dm_{\mathrm{ang}}(t) \\
& \lesssim \frac{\log(H + 2)}{H + 1} + N^{-1/2}.
\label{eq:multi-fejer-error}
\end{align}

Take $H = H_N$. For every $a \in \mathcal{A}_D$, the Fourier expansion of $P_{I,H_N}$, together with \eqref{eq:multi-angular-cancellation}, \eqref{eq:multi-fejer-coefficients}, and \eqref{eq:multi-fejer-error}, gives
\begin{align}
\left|\E_{n \in B_N} \one_I(\arg(n)) a(n) - \alpha_I \E_{n \in B_N} a(n)\right|
& \lesssim_D (\log N)^{-c} \log \log N + \frac{\log \log N}{(\log N)^2} + N^{-1/2} \\
& \lesssim_D (\log N)^{-c} \log \log N.
\label{eq:multi-unnormalized-sector-comparison}
\end{align}

By Lemma \ref{lem:countsec},
\begin{align}
|B_N|
& = \pi N + O(N^{1/2}), \\ |K_{s,\gamma}(N)|
& = \frac{\gamma N}{2} + O(N^{1/2}),
\end{align}
uniformly over $s$ and $\gamma$. Since $(\log N)^{-A_d} N^{1/2} \rightarrow \infty$, all admissible sectors are nonempty for sufficiently large $N$. Moreover, writing
\begin{align}
q_{N,I}
& := \frac{|K_{s,\gamma}(N)|}{|B_N|},
\end{align}
we have $q_{N,I} = \alpha_I + O(N^{-1/2}) \gtrsim \gamma$
uniformly over the admissible sectors.

Consequently, \eqref{eq:multi-unnormalized-sector-comparison} implies
\begin{align}
\left|\E_{n \in K_{s,\gamma}(N)} a(n) - \E_{n \in B_N} a(n)\right|
& = \frac{1}{q_{N,I}} \left|\E_{n \in B_N} \one_I(\arg(n)) a(n) - q_{N,I} \E_{n \in B_N} a(n)\right| \\
& \lesssim_D \frac{1}{\gamma} \left((\log N)^{-c} \log \log N + N^{-1/2}\right).
\end{align}
Set $\eta_N := (\log N)^{A_d - c} \log \log N = (\log N)^{-c/2} \log \log N.$
We have proved
\begin{align}
\sup_{\substack{(\log N)^{-A_d} \leq \gamma \leq 2\pi \\ s \in [0, 2\pi)}} \sup_{a \in \mathcal{A}_D} \left|\E_{n \in K_{s,\gamma}(N)} a(n) - \E_{n \in B_N} a(n)\right|
& \lesssim_d \eta_N.
\label{eq:multi-scalar-sector-disk}
\end{align}

Choose a set $\mathcal{P}^* \subset \mathcal{P}_{\Z[i]}$ containing exactly one representative from each associate class, and set $P_j^* := \{p \in \mathcal{P}^* : S_p = R_j\}$ for $j \in \{1, \ldots, d_0\}$. The sets $P_j^*$ are pairwise disjoint and cover $\mathcal{P}^*$; some of them may be empty. These choices are fixed independently of $N$. Unique factorization gives
\begin{align}
n
& = i^{r(n)} \prod_{p \in \mathcal{P}^*} p^{v_p(n)},
\end{align}
where $r(n) \in \Z/4\Z$ and $v_p(n) \in \mathbb{N}_0$. Define
\begin{align}
\Omega_j(n)
& := \sum_{p \in P_j^*} v_p(n), \\ C(n)
& := \big(r(n), \Omega_1(n), \ldots, \Omega_{d_0}(n)\big).
\end{align}
Then $C(n)$ takes values in $\mathcal{C}_{d_0} := \Z/4\Z \times \mathbb{N}_0^{d_0}$, and
\begin{align}
\sum_{j = 1}^{d_0} \Omega_j(n)
& = \Omega(n).
\label{eq:multi-total-prime-count}
\end{align}
Since the transformations in the action commute, multiplicativity gives
\begin{align}
S_n
& = S_{i^{r(n)}} \circ R_1^{\Omega_1(n)} \circ \cdots \circ R_{d_0}^{\Omega_{d_0}(n)}.
\label{eq:multi-action-factorization}
\end{align}

For $\mathbf{z} = (z_1, \ldots, z_{d_0}) \in (\mathbb{S}^1)^{d_0}$ and $\ell \in \{0, 1, 2, 3\}$, define
\begin{align}
a_{\mathbf{z},\ell}(n)
& := i^{\ell r(n)} \prod_{j = 1}^{d_0} z_j^{\Omega_j(n)}.
\end{align}
The functions $r$ and $\Omega_j$ are additive, with $r$ interpreted modulo $4$, so each $a_{\mathbf{z},\ell}$ is completely multiplicative and has absolute value $1$. Every Gaussian prime is of the form $i^r p$ for some $p \in P_j^*$, and $a_{\mathbf{z},\ell}(i^r p) = i^{\ell r} z_j.$
Thus $a_{\mathbf{z},\ell}$ takes at most $4d_0$ values on $\mathcal{P}_{\Z[i]}$ and belongs to $\mathcal{A}_D$, since $4d_0 \leq 4d = D$. Applying \eqref{eq:multi-scalar-sector-disk} gives
\begin{align}
\sup_{\substack{(\log N)^{-A_d} \leq \gamma \leq 2\pi \\ s \in [0, 2\pi) \\ \mathbf{z} \in (\mathbb{S}^1)^{d_0} \\ \ell \in \{0, 1, 2, 3\}}} \left|\E_{n \in K_{s,\gamma}(N)} a_{\mathbf{z},\ell}(n) - \E_{n \in B_N} a_{\mathbf{z},\ell}(n)\right|
& \lesssim_d \eta_N.
\label{eq:multi-character-comparison}
\end{align}

We first control the contribution of large prime-factor counts. The Gaussian prime-counting upper bound, recalled in \cite{DLMS2024}*{Section 2.2}, gives
\begin{align}
\#\{p \in \mathcal{P}^* : \mathcal{N}(p)
\leq t\}
& \lesssim \frac{t}{\log t}
\end{align}
for $t \geq 3$. Partial summation, treating $r = 1$, $r = 2$, and $r \geq 3$ separately, gives
\begin{align}
\sum_{\substack{p \in \mathcal{P}^*,\, r \geq 1 \\ \mathcal{N}(p)^r \leq N}} \mathcal{N}(p)^{-r}
& \lesssim \log \log N, \\ \sum_{\substack{p \in \mathcal{P}^*,\, r \geq 1 \\ \mathcal{N}(p)^r \leq N}} \mathcal{N}(p)^{-r/2}
& \lesssim \frac{N^{1/2}}{\log N} + \log \log N.
\label{eq:multi-prime-power-sums}
\end{align}

For every $n \neq 0$,
\begin{align}
\Omega(n)
& = \sum_{p \in \mathcal{P}^*} \sum_{r \geq 1} \one_{p^r \mid n}.
\end{align}
Applying Lemma \ref{lem:divisorest} and then \eqref{eq:multi-prime-power-sums}, we obtain
\begin{align}
\E_{n \in K_{s,\gamma}(N)} \Omega(n)
& = \sum_{\substack{p \in \mathcal{P}^*,\, r \geq 1 \\ \mathcal{N}(p)^r \leq N}} \mathcal{N}(p)^{-r} + O\left(\frac{1}{\gamma N^{1/2}} \sum_{\substack{p \in \mathcal{P}^*,\, r \geq 1 \\ \mathcal{N}(p)^r \leq N}} \mathcal{N}(p)^{-r/2}\right) \\
& \lesssim \log \log N + \frac{1}{\gamma \log N} + \frac{\log \log N}{\gamma N^{1/2}} \\
& \lesssim_d \log \log N,
\end{align}
uniformly over the admissible sectors, where the last inequality uses $A_d \leq 1/4$. This also applies to $B_N$, by taking $s = 0$ and $\gamma = 2\pi$. Markov's inequality therefore gives, for every $L \geq 1$,
\begin{align}
\sup_{\substack{(\log N)^{-A_d} \leq \gamma \leq 2\pi \\ s \in [0, 2\pi)}} \frac{\big|\{n \in K_{s,\gamma}(N) : \Omega(n)
> L\}\big|}{|K_{s,\gamma}(N)|} + \frac{\big|\{n \in B_N : \Omega(n)
> L\}\big|}{|B_N|}
& \lesssim_d \frac{\log \log N}{L}. \- \- \qquad \-
\label{eq:multi-prime-count-tail}
\end{align}

For any finite nonempty set $E \subset \Z[i] \setminus \{0\}$, let $p_E$ denote the distribution of $C(n)$ under uniform averaging over $E$:
\begin{align}
p_E(r, \mathbf{k})
& := \frac{\big|\{n \in E : C(n)
= (r, \mathbf{k})\}\big|}{|E|},
\end{align}
where $r \in \Z/4\Z$ and $\mathbf{k} = (k_1, \ldots, k_{d_0}) \in \mathbb{N}_0^{d_0}$. Let $m_{d_0}$ denote normalized Haar measure on $(\mathbb{S}^1)^{d_0}$. Fourier inversion gives
\begin{align}
p_E(r, \mathbf{k})
& = \frac{1}{4} \sum_{\ell = 0}^{3} i^{-\ell r} \int_{(\mathbb{S}^1)^{d_0}} \left(\E_{n \in E} a_{\mathbf{z},\ell}(n)\right) \prod_{j = 1}^{d_0} z_j^{-k_j} \, dm_{d_0}(\mathbf{z}).
\end{align}
Combining this identity with \eqref{eq:multi-character-comparison} gives
\begin{align}
\left|p_{K_{s,\gamma}(N)}(r, \mathbf{k}) - p_{B_N}(r, \mathbf{k})\right|
& \lesssim_d \eta_N
\label{eq:multi-individual-count-comparison}
\end{align}
uniformly over the admissible sectors and all $(r, \mathbf{k}) \in \mathcal{C}_{d_0}$.

For an integer $L \geq 1$, set $\mathcal{C}_{d_0}(L) := \Z/4\Z \times \{0, \ldots, L\}^{d_0}.$
This set has $4(L + 1)^{d_0}$ elements, which is at most $4(L + 1)^d$. If $C(n) \notin \mathcal{C}_{d_0}(L)$, then \eqref{eq:multi-total-prime-count} implies $\Omega(n) > L$. Summing \eqref{eq:multi-individual-count-comparison} over $\mathcal{C}_{d_0}(L)$ and using \eqref{eq:multi-prime-count-tail} outside this set gives
\begin{align}
\sup_{\substack{(\log N)^{-A_d} \leq \gamma \leq 2\pi \\ s \in [0, 2\pi)}} \sum_{\mathbf{c} \in \mathcal{C}_{d_0}} \left|p_{K_{s,\gamma}(N)}(\mathbf{c}) - p_{B_N}(\mathbf{c})\right|
& \lesssim_d (L + 1)^d \eta_N + \frac{\log \log N}{L}.
\end{align}
Since $\eta_N = (\log N)^{-A_d} \log \log N$, we balance the two terms in the preceding estimate by taking $L  := \left\lfloor(\log N)^{A_d/(d + 1)}\right\rfloor.$
For all sufficiently large $N$, this gives
\begin{align}
(L + 1)^d \eta_N + \frac{\log \log N}{L}
& \lesssim_d \frac{\log \log N}{(\log N)^{A_d/(d + 1)}}.
\end{align}
Set $b_d := A_d/(2(d + 1))$. Since $\log \log N \lesssim_d (\log N)^{b_d}$, we conclude that
\begin{align}
\mathcal{D}_N
& := \sup_{\substack{(\log N)^{-A_d} \leq \gamma \leq 2\pi \\ s \in [0, 2\pi)}} \sum_{\mathbf{c} \in \mathcal{C}_{d_0}} \left|p_{K_{s,\gamma}(N)}(\mathbf{c}) - p_{B_N}(\mathbf{c})\right| \\
& \lesssim_d (\log N)^{-b_d}.
\end{align}

Fix $g \in C(Y)$. For each $y \in Y$, define $F_y : \mathcal{C}_{d_0} \rightarrow \mathbb{C}$ by
\begin{align}
F_y(r, \mathbf{k})
& := g\big((S_{i^r} \circ R_1^{k_1} \circ \cdots \circ R_{d_0}^{k_{d_0}})(y)\big).
\end{align}
By \eqref{eq:multi-action-factorization}, we have $g(S_n y) = F_y(C(n))$, and $\|F_y\|_\infty \leq \|g\|_\infty$ uniformly in $y$. Hence
\begin{align}
\sup_{\substack{(\log N)^{-A_d} \leq \gamma \leq 2\pi \\ s \in [0, 2\pi) \\ y \in Y}} \left|\E_{n \in K_{s,\gamma}(N)} g(S_n y) - \E_{n \in B_N} g(S_n y)\right|
& \leq \|g\|_\infty \mathcal{D}_N
\lesssim_d \|g\|_\infty (\log N)^{-b_d}.
\end{align}
All implied constants and lower thresholds for $N$ depend only on $d$. Choosing $C_d$ and $N_d$ accordingly proves the theorem.
\end{proof}

\begin{proof}[Proof of Theorem \ref{thm:multisec}]
Fix $g \in C(Y)$ and $y \in Y$. Applying Theorem \ref{thm:fixedmultiplicativesec} to the full sector gives
\begin{align}
\E_{n \in B_N} g(S_n y)
& \longrightarrow \int_Y g \, d\nu.
\end{align}
Theorem \ref{thm:sector-disk} tells us that  the sector averages differ from these full-disk averages by at most $C_d \|g\|_\infty (\log N)^{-b_d}$, uniformly over the sectors in the theorem statement. Since $b_d > 0$, the triangle inequality proves the asserted convergence.
\end{proof}

\section{Arithmetic and Dynamical Consequences}
\subsection{Equi-distribution of \texorpdfstring{$\Omega$}{Omega}}
Finite cyclic rotations give equidistribution of $\Omega(n)$ modulo every fixed integer, including cancellation of the Gaussian Liouville function.

\begin{corollary}\label{implication1}
Let $(\gamma_N)_{N \geq 3}$ be a sequence of positive real numbers satisfying $\gamma_N \rightarrow 0$ and $\gamma_N^{-1} = N^{o(1)}$. Then, for every fixed integer $q \geq 2$,
\begin{align}
\sup_{\substack{\gamma_N \leq \gamma \leq 2\pi \\ s \in [0, 2\pi)}} \max_{a \in \{0, \ldots, q - 1\}} \left|\frac{\big|\{n \in K_{s,\gamma}(N) : \Omega(n)
\equiv a \pmod q\}\big|}{|K_{s,\gamma}(N)|} - \frac{1}{q}\right|
& \longrightarrow 0,
\qquad 
N \rightarrow \infty.
\end{align}
In particular, the Gaussian Liouville function $\lambda(n) := (-1)^{\Omega(n)}$ satisfies
\begin{align}
\sup_{\substack{\gamma_N \leq \gamma \leq 2\pi \\ s \in [0, 2\pi)}} \left|\E_{n \in K_{s,\gamma}(N)} \lambda(n)\right|
& \longrightarrow 0,
\qquad 
N \rightarrow \infty.
\end{align}
\end{corollary}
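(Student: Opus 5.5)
The plan is to deduce the corollary from Theorem \ref{thm:additivesec} applied to a finite cyclic rotation, or equivalently from the uniform shift invariance \eqref{eq:uniformArithmeticShift} established in the proof of Theorem \ref{thm:omegaavg}. Fix $q \geq 2$. Let $X := \Z/q\Z$ with the discrete metric and $T(x) := x + 1$. Then $X$ is compact metric and $T$ is continuous. The unique $T$-invariant Borel probability measure is the normalized counting measure $\mu$, since invariance forces all point masses to be equal along the single orbit.

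For $a \in \{0, \ldots, q-1\}$, take $f := \one_{\{a\}} \in C(X)$ and base point $x = 0$. Then $f(T^{\Omega(n)} 0) = \one_{\Omega(n) \equiv a \pmod q}$, and $\int_X f \, d\mu = 1/q$. Theorem \ref{thm:additivesec} gives convergence to $1/q$ uniformly over $\gamma_N \leq \gamma \leq 2\pi$ and $s \in [0, 2\pi)$. Since there are only finitely many residues $a$, the maximum over $a$ can be taken inside the limit, which proves the first display.

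An alternative, self-contained route bypasses the weak-$*$ argument. Apply \eqref{eq:uniformArithmeticShift} with $b(k) := e^{2\pi i j k/q}$ for $1 \leq j \leq q-1$. This gives
\begin{align}
\big|e^{2\pi i j/q} - 1\big| \left|\E_{n \in K_{s,\gamma}(N)} e^{2\pi i j \Omega(n)/q}\right| \leq \Delta_N \longrightarrow 0.
\end{align}
The factor $|e^{2\pi i j/q} - 1|$ is bounded below by a constant depending only on $q$, so every nontrivial character average tends to zero uniformly over the admissible sectors. Finite Fourier inversion on $\Z/q\Z$ then yields the residue-class frequencies.

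For the Liouville statement, take $q = 2$. Write
\begin{align}
\E_{n \in K_{s,\gamma}(N)} \lambda(n) = p_{\mathrm{even}} - p_{\mathrm{odd}},
\end{align}
where $p_{\mathrm{even}}$ and $p_{\mathrm{odd}}$ are the proportions of $n$ with $\Omega(n) \equiv 0$ and $\Omega(n) \equiv 1 \pmod 2$. Each proportion lies within $o(1)$ of $1/2$, so the difference tends to zero uniformly over the admissible sectors.

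There is no real obstacle here. The only point needing care is that all sector averages are well-defined for large $N$, which follows from Lemma \ref{lem:countsec} exactly as in the proof of Theorem \ref{thm:omegaavg}.
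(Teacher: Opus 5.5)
Your proposal is correct and follows the paper's proof. Like the paper, you apply Theorem \ref{thm:additivesec} to the rotation $x \mapsto x+1$ on $\Z/q\Z$ with base point $0$ and singleton indicators, take the maximum over the finitely many residues, and obtain the Liouville statement from the case $q=2$. Your alternative route, which feeds the characters $e^{2\pi i jk/q}$ into the uniform shift invariance \eqref{eq:uniformArithmeticShift}, is also valid but not needed.
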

\begin{proof}
Apply Theorem \ref{thm:additivesec} to the cyclic rotation $x \mapsto x + 1$ on $\Z/q\Z$, with its uniform invariant measure, initial point $0$, and singleton indicators. Taking the maximum over the finitely many residue classes gives the first assertion. The second follows by taking $q = 2$ and using $\lambda(n) = 2 \one_{\Omega(n) \in 2\Z} - 1$.
\end{proof}

For $\alpha \in \R \setminus \mathbb{Q}$, applying Theorem \ref{thm:additivesec} to the irrational rotation $R_\alpha(y) := y + \alpha$ on $\mathbb{T}$ gives, for every sequence $(\gamma_N)$ satisfying its hypotheses and every $F \in C(\mathbb{T})$,
\begin{align}
\sup_{\substack{\gamma_N \leq \gamma \leq 2\pi \\ s \in [0, 2\pi) \\ y \in \mathbb{T}}} \left|\E_{n \in K_{s,\gamma}(N)} F\big(y + \Omega(n) \alpha\big) - \int_{\mathbb{T}} F \, dm_{\mathbb{T}}\right|
& \longrightarrow 0,
\qquad 
N \rightarrow \infty.
\label{eq:irrational-rotation-consequence}
\end{align}

As a consequence, we obtain equidistribution modulo one uniformly over moving and shrinking sectors. The conclusion is also uniform over the initial point and the target interval.

\begin{corollary}
Fix $\alpha \in \R \setminus \mathbb{Q}$, and let $(\gamma_N)_{N \geq 3}$ be a sequence of positive real numbers satisfying $\gamma_N \rightarrow 0$ and $\gamma_N^{-1} = N^{o(1)}$. Let $\mathcal{I}$ denote the collection of all intervals in $\mathbb{T}$. Then
\begin{align}
\sup_{\substack{\gamma_N \leq \gamma \leq 2\pi \\ s \in [0, 2\pi) \\ y \in \mathbb{T} \\ I \in \mathcal{I}}} \left|\frac{\big|\{n \in K_{s,\gamma}(N) : y + \Omega(n) \alpha \in I\}\big|}{|K_{s,\gamma}(N)|} - m_{\mathbb{T}}(I)\right|
& \longrightarrow 0,
\qquad 
N \rightarrow \infty
\end{align}
and the convergence in \eqref{eq:irrational-rotation-consequence} holds for every Riemann-integrable function $F : \mathbb{T} \rightarrow \mathbb{C}$.
\end{corollary}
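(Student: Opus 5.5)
The plan is to deduce both assertions from \eqref{eq:irrational-rotation-consequence} by a standard sandwich argument. Because the uniformity is required over $y$, $s$, $\gamma$, and the target interval $I$ simultaneously, I would use a finite family of continuous test functions and compare with it, rather than approximating each $I$ separately.

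\emph{Step 1: reduction to a finite family.} Fix $\varepsilon>0$ and choose an integer $m\ge 1$ with $1/m<\varepsilon$. Partition $\mathbb T$ into the arcs $[j/m,(j+1)/m)$. For each pair of grid points $j/m$ and $k/m$, consider the arc $J$ between them. Choose continuous functions $\phi^-_{J}\le \one_J\le \phi^+_{J}$ with $\int(\phi^+_J-\phi^-_J)\,dm_{\mathbb T}\le 2/m$, obtained by linear ramps of width $1/(2m)$. This family is finite, with $O(m^2)$ members.

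Any interval $I\in\mathcal I$ satisfies $J_-\subset I\subset J_+$ for grid arcs $J_\pm$ with $m_{\mathbb T}(J_+\setminus J_-)\le 2/m$; the cases of $I$ empty, a point, or all of $\mathbb T$ are trivial. Hence $\phi^-_{J_-}\le\one_I\le\phi^+_{J_+}$ pointwise, and both integrals lie within $O(1/m)$ of $m_{\mathbb T}(I)$.

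\emph{Step 2: apply the theorem.} Apply \eqref{eq:irrational-rotation-consequence} to each of the finitely many $\phi^\pm_J$ and take the maximum over them. The resulting supremum over $\gamma,s,y$ tends to zero. For $N$ large, evaluating the average of $\one_I(y+\Omega(n)\alpha)$ by the sandwich gives
\[
m_{\mathbb T}(I)-O(1/m)-o(1)\le \frac{|\{n\in K_{s,\gamma}(N): y+\Omega(n)\alpha\in I\}|}{|K_{s,\gamma}(N)|}\le m_{\mathbb T}(I)+O(1/m)+o(1),
\]
uniformly in all parameters including $I$. Letting $\varepsilon\to0$ proves the first claim.

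\emph{Step 3: Riemann-integrable $F$.} Treating real and imaginary parts separately, assume $F$ is real. For $\varepsilon>0$, Riemann integrability provides step functions $\sigma^-\le F\le\sigma^+$, each a finite combination of interval indicators, with $\int(\sigma^+-\sigma^-)<\varepsilon$. Step 2 applies to each indicator uniformly in $y,s,\gamma$, so it applies by linearity to $\sigma^\pm$. Sandwiching as in Step 2 gives a $\limsup$ of the supremum that is at most $\varepsilon$.

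The only delicate point is keeping uniformity in $I$, and the finite grid family handles it. Everything else is routine, so I expect no real obstacle.
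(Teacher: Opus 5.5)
Your proposal is correct and follows essentially the same route as the paper: sandwich an arbitrary arc between unions of grid cells, control the finitely many grid arcs through continuous approximants and \eqref{eq:irrational-rotation-consequence}, then pass to Riemann-integrable $F$ via upper and lower step functions. The only difference is cosmetic. You attach the continuous ramps directly to the grid arcs in a single step, whereas the paper first proves convergence for each fixed half-open interval and then uses monotonicity against the finite grid family $\mathcal{I}_q$.
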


\begin{proof}
By Lemma \ref{lem:countsec} and $\gamma_N N^{1/2} \rightarrow \infty$, the averaging sets are nonempty for all sufficiently large $N$, uniformly over the admissible sectors. For such $N$, write
\begin{align}
P_{N,s,\gamma,y}(I)
& := \frac{\big|\{n \in K_{s,\gamma}(N) : y + \Omega(n) \alpha \in I\}\big|}{|K_{s,\gamma}(N)|}.
\end{align}

We first establish uniform convergence for each fixed half-open interval $J \subset \mathbb{T}$. Fix $\eta > 0$. Since the boundary of $J$ has Haar measure zero, there exist real-valued functions $F_J^-, F_J^+ \in C(\mathbb{T})$ satisfying $0 \leq F_J^- \leq \one_J \leq F_J^+ \leq 1$ and
\begin{align}
\int_{\mathbb{T}} \big(F_J^+ - F_J^-\big) \, dm_{\mathbb{T}}
& \leq \eta.
\end{align}
Applying \eqref{eq:irrational-rotation-consequence} to these two functions and using the pointwise inequalities gives
\begin{align}
\limsup_{N \rightarrow \infty} \sup_{\substack{\gamma_N \leq \gamma \leq 2\pi \\ s \in [0, 2\pi) \\ y \in \mathbb{T}}} \left|P_{N,s,\gamma,y}(J) - m_{\mathbb{T}}(J)\right|
& \leq \eta.
\end{align}
Since $\eta$ is arbitrary, this proves uniform convergence for every fixed half-open interval.

To obtain uniformity over all intervals, fix an integer $q \geq 2$. Let $\mathcal{I}_q$ be the finite family of half-open intervals in $\mathbb{T}$ whose endpoints belong to $\{0, 1/q, \ldots, (q - 1)/q\}$, including intervals that cross $0$, together with the empty set and $\mathbb{T}$. By the preceding convergence and finiteness of $\mathcal{I}_q$,
\begin{align}
E_N(q)
& := \max_{J \in \mathcal{I}_q} \sup_{\substack{\gamma_N \leq \gamma \leq 2\pi \\ s \in [0, 2\pi) \\ y \in \mathbb{T}}} \left|P_{N,s,\gamma,y}(J) - m_{\mathbb{T}}(J)\right|
\longrightarrow 0
\end{align}
as $N \rightarrow \infty$ with $q$ fixed.

Every interval $I \in \mathcal{I}$ admits intervals $J^-, J^+ \in \mathcal{I}_q$ such that $J^- \subseteq I \subseteq J^+$ and
\begin{align}
m_{\mathbb{T}}(J^+) - m_{\mathbb{T}}(J^-)
& \leq \frac{2}{q}.
\end{align}
Indeed, take $J^-$ to be the union of the grid cells contained in $I$ and $J^+$ to be the union of the grid cells meeting $I$. These unions are intervals, possibly empty or equal to $\mathbb{T}$, and they differ in at most two grid cells. This construction applies regardless of the convention at the endpoints of $I$.

Monotonicity now gives, uniformly over $s$, $\gamma$, $y$, and $I$,
\begin{align}
\left|P_{N,s,\gamma,y}(I) - m_{\mathbb{T}}(I)\right|
& \leq E_N(q) + \frac{2}{q}.
\end{align}
Taking the supremum over these parameters, then letting $N \rightarrow \infty$ and subsequently $q \rightarrow \infty$, proves the interval assertion.

Finally, let $F : \mathbb{T} \rightarrow \R$ be Riemann-integrable. For every $\eta > 0$, there exist real-valued step functions $F^-$ and $F^+$, constant on the intervals of a finite partition of $\mathbb{T}$, such that $F^- \leq F \leq F^+$ and
\begin{align}
\int_{\mathbb{T}} \big(F^+ - F^-\big) \, dm_{\mathbb{T}}
& \leq \eta.
\end{align}
By the uniform interval estimate just proved and linearity, the averages of $F^-$ and $F^+$ converge to their respective integrals uniformly over $s$, $\gamma$, and $y$. The pointwise inequalities therefore imply
\begin{align}
\limsup_{N \rightarrow \infty} \sup_{\substack{\gamma_N \leq \gamma \leq 2\pi \\ s \in [0, 2\pi) \\ y \in \mathbb{T}}} \left|\E_{n \in K_{s,\gamma}(N)} F\big(y + \Omega(n) \alpha\big) - \int_{\mathbb{T}} F \, dm_{\mathbb{T}}\right|
& \leq \eta.
\end{align}
Letting $\eta \rightarrow 0$ proves the assertion for real-valued $F$. Applying this argument to the real and imaginary parts gives the conclusion for complex-valued Riemann-integrable functions.
\end{proof}

\subsection{Angular-Dynamical Asymptotic Independence}
Theorem \ref{thm:additivesec} and Lemma \ref{lem:countsec} imply asymptotic independence of the relative angular position $u_{s,\gamma}(n)$ and the dynamical variable $T^{\Omega(n)} x$, uniformly over the sectors and initial points in the theorem.

\begin{corollary}\label{implication2}
Let $X$, $\mu$, $T$, and $(\gamma_N)$ satisfy the hypotheses of Theorem \ref{thm:additivesec}. Then, for every Riemann-integrable function $\varphi : [0, 1] \rightarrow \mathbb{C}$ and every $f \in C(X)$,
\begin{align}
\sup_{\substack{\gamma_N \leq \gamma \leq 2\pi \\ s \in [0, 2\pi) \\ x \in X}} \left|\E_{n \in K_{s,\gamma}(N)} \varphi(u_{s,\gamma}(n)) f(T^{\Omega(n)} x) - \left(\int_0^1 \varphi(t) \, dt\right) \left(\int_X f \, d\mu\right)\right|
& \longrightarrow 0
\label{eq:angular-dynamical-rescaled}
\end{align}
as $N \rightarrow \infty$. Moreover, for every $f \in C(X)$, the convergence is uniform over indicator weights of relative angular intervals:
\begin{align}
\sup_{\substack{\gamma_N \leq \gamma \leq 2\pi \\ s \in [0, 2\pi) \\ x \in X \\ 0 \leq a < b \leq 1}} \left|\E_{n \in K_{s,\gamma}(N)} \one_{[a,b)}(u_{s,\gamma}(n)) f(T^{\Omega(n)} x) - (b - a) \int_X f \, d\mu\right|
& \longrightarrow 0.
\label{eq:angular-dynamical-intervals}
\end{align}
\end{corollary}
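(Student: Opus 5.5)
The plan is to reduce everything to Theorem \ref{thm:additivesec} applied to subsectors. The key observation is geometric. For $0 \leq a < b \leq 1$, the set $\{n \in K_{s,\gamma}(N) : u_{s,\gamma}(n) \in [a,b)\}$ is exactly the subsector $K_{s+a\gamma,(b-a)\gamma}(N)$. Lemma \ref{lem:countsec} then gives
\begin{align}
\frac{|K_{s+a\gamma,(b-a)\gamma}(N)|}{|K_{s,\gamma}(N)|} = (b-a) + O\!\left(\frac{1}{\gamma N^{1/2}}\right),
\end{align}
uniformly in $s$, $a$, $b$ and $\gamma \geq \gamma_N$. The error term tends to zero because $\gamma_N N^{1/2} \to \infty$.

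The main obstacle is the regime where $(b-a)\gamma < \gamma_N$. There the subsector is too narrow for Theorem \ref{thm:additivesec} to apply. To handle it, I would introduce a slowly shrinking auxiliary width. Put $\gamma_N' := \gamma_N \cdot \sqrt{\gamma_N}$, or better $\gamma_N' := \gamma_N^{2}$. This still satisfies $\gamma_N' \to 0$ and $(\gamma_N')^{-1} = N^{o(1)}$, so Theorem \ref{thm:additivesec} holds uniformly for widths at least $\gamma_N'$.

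With this in hand, I would prove \eqref{eq:angular-dynamical-intervals} by splitting into two cases, writing $\delta_N := \gamma_N'/\gamma_N \to 0$.
\begin{enumerate}
\item If $b-a \geq \delta_N$, the subsector has width $(b-a)\gamma \geq \gamma_N'$. The sector average equals
\begin{align}
\frac{|K_{s+a\gamma,(b-a)\gamma}(N)|}{|K_{s,\gamma}(N)|} \cdot \E_{n \in K_{s+a\gamma,(b-a)\gamma}(N)} f(T^{\Omega(n)}x).
\end{align}
The first factor is $(b-a) + o(1)$ by the count above. The second factor is $\int_X f\,d\mu + o(1)$, uniformly in $x$, by Theorem \ref{thm:additivesec} applied with $\gamma_N'$. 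Since $b - a \leq 1$ and $f$ is bounded, the product is within $o(1)$ of $(b-a)\int_X f\,d\mu$, uniformly.
\item If $b-a < \delta_N$, both the average and the target are bounded by $\|f\|_\infty$ times $(b-a) + o(1)$, hence they are $o(1)$.
\end{enumerate}
Both cases are uniform in $a,b,s,\gamma,x$, which gives \eqref{eq:angular-dynamical-intervals}.

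For \eqref{eq:angular-dynamical-rescaled}, I would argue by linearity and sandwiching. Step functions on $[0,1]$ are finite combinations of the indicators $\one_{[a,b)}$, so the convergence holds for them by the previous step. The single endpoint $1$ is harmless, because $u_{s,\gamma}(n) < 1$ always. Reduce to real-valued $\varphi$. For any $\eta > 0$, choose step functions $\varphi^- \leq \varphi \leq \varphi^+$ with $\int (\varphi^+ - \varphi^-) < \eta$.

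This sandwich directly controls averages of $\varphi(u) f$ only when $f \geq 0$. So I would write $f = f_1 - f_2 + i(f_3 - f_4)$ with each $f_k \geq 0$ continuous, and apply the sandwich to each piece. Since $\varphi$ is bounded, this gives a $\limsup$ error of at most $\eta\, \|f\|_\infty$ times a constant, uniformly in $s,\gamma,x$. Letting $\eta \to 0$ finishes the proof.
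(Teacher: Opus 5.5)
Your proposal is correct and follows essentially the paper's route: identify the relative-angle window with the subsector $K_{s+a\gamma,(b-a)\gamma}(N)$, apply Theorem \ref{thm:additivesec} to a rescaled width sequence when $b-a$ is not too small, bound thin windows trivially via Lemma \ref{lem:countsec}, and then pass to Riemann-integrable $\varphi$ by step-function approximation. The differences are cosmetic. You use the $N$-dependent threshold $\delta_N=\gamma_N$ (via $\gamma_N'=\gamma_N^2$), whereas the paper fixes $\delta$ and lets $\delta\to 0$ after taking a limsup; and you sandwich $\varphi$ after splitting $f$ into nonnegative parts, whereas the paper uses $|\varphi-v|\le w$ and applies the step-function case to $w$ with $f\equiv 1$.
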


\begin{proof}
Fix $f \in C(X)$, and set $L := \int_X f \, d\mu$. We first prove \eqref{eq:angular-dynamical-intervals}, writing $D_N(f)$ for the supremum appearing there.

For $0 \leq a < b \leq 1$, let $s_a \in [0, 2\pi)$ be the representative of $s + a \gamma$ modulo $2\pi$, and set
\begin{align}
K_{s,\gamma}^{a,b}(N)
& := \{n \in K_{s,\gamma}(N) : u_{s,\gamma}(n) \in [a,b)\}
= K_{s_a,(b - a) \gamma}(N).
\end{align}
By Lemma \ref{lem:countsec},
\begin{align}
|K_{s,\gamma}(N)|
& = \frac{\gamma N}{2} + O(N^{1/2}), &
|K_{s,\gamma}^{a,b}(N)|
& = \frac{(b - a) \gamma N}{2} + O(N^{1/2}),
\end{align}
uniformly over $s$, $\gamma$, $a$, and $b$. Since $\gamma_N N^{1/2} \rightarrow \infty$, the first estimate gives $|K_{s,\gamma}(N)| \geq \gamma N/4 > 0$ for all sufficiently large $N$, uniformly over the admissible sectors. Consequently,
\begin{align}
\frac{|K_{s,\gamma}^{a,b}(N)|}{|K_{s,\gamma}(N)|}
& = b - a + O\left(\frac{1}{\gamma_N N^{1/2}}\right),
\label{eq:relative-subsector-proportion}
\end{align}
uniformly over all the same parameters.

Fix $\delta \in (0, 1)$. The sequence $(\delta \gamma_N)$ satisfies the same hypotheses as $(\gamma_N)$ in Theorem \ref{thm:additivesec}. Whenever $b - a \geq \delta$, the subsector $K_{s,\gamma}^{a,b}(N)$ has width at least $\delta \gamma_N$. These subsectors are therefore nonempty for all sufficiently large $N$, uniformly over the parameters in this range, and Theorem \ref{thm:additivesec} gives
\begin{align}
\sup_{\substack{\gamma_N \leq \gamma \leq 2\pi \\ s \in [0, 2\pi) \\ x \in X \\ 0 \leq a < b \leq 1 \\ b - a \geq \delta}} \left|\E_{n \in K_{s,\gamma}^{a,b}(N)} f(T^{\Omega(n)} x) - L\right|
& \longrightarrow 0.
\end{align}
Using the identity
\begin{align}
\E_{n \in K_{s,\gamma}(N)} \one_{[a,b)}(u_{s,\gamma}(n)) f(T^{\Omega(n)} x)
& = \frac{|K_{s,\gamma}^{a,b}(N)|}{|K_{s,\gamma}(N)|} \E_{n \in K_{s,\gamma}^{a,b}(N)} f(T^{\Omega(n)} x),
\end{align}
together with \eqref{eq:relative-subsector-proportion} and the fact that the cardinality ratio is at most $1$, proves \eqref{eq:angular-dynamical-intervals} with the supremum restricted to $b - a \geq \delta$.

For the remaining intervals, where $b - a < \delta$, we do not need to average over the subsector, which may be empty. Instead, \eqref{eq:relative-subsector-proportion} gives
\begin{align}
\left|\E_{n \in K_{s,\gamma}(N)} \one_{[a,b)}(u_{s,\gamma}(n)) f(T^{\Omega(n)} x) - (b - a) L\right|
& \leq \|f\|_\infty \frac{|K_{s,\gamma}^{a,b}(N)|}{|K_{s,\gamma}(N)|} + (b - a) |L| \\
& \leq 2 \delta \|f\|_\infty + O\left(\frac{\|f\|_\infty}{\gamma_N N^{1/2}}\right),
\end{align}
uniformly over all the parameters. Combining the two ranges yields
\begin{align}
\limsup_{N \rightarrow \infty} D_N(f)
& \leq 2 \delta \|f\|_\infty.
\end{align}
Letting $\delta \rightarrow 0$ proves \eqref{eq:angular-dynamical-intervals}.
By linearity, \eqref{eq:angular-dynamical-rescaled} holds for step functions on finite partitions of $[0, 1)$ into half-open intervals. For a Riemann-integrable $\varphi$ and $\varepsilon > 0$, choose step functions $v, w$ on $[0, 1)$ such that $w \geq 0$, $|\varphi - v| \leq w$, and $\int_0^1 w(t) \, dt \leq \varepsilon$. Apply the step-function conclusion to $v$ with $f$ and to $w$ with the constant function $1$. The pointwise bound then shows that the limsup of the supremum in \eqref{eq:angular-dynamical-rescaled} is at most $(\|f\|_\infty + |L|) \int_0^1 w(t) \, dt \leq 2 \varepsilon \|f\|_\infty$. Letting $\varepsilon \rightarrow 0$ proves the assertion.
\end{proof}

In particular, for every Riemann-integrable function $\psi : \mathbb{T}_{\mathrm{ang}} \rightarrow \mathbb{C}$, taking $s = 0$, $\gamma = 2\pi$, and $\varphi(t) := \psi(2\pi t + 2\pi\Z)$ in \eqref{eq:angular-dynamical-rescaled} gives, for every $f \in C(X)$,
\begin{align}
\sup_{x \in X} \left|\E_{n \in B_N} \psi(\arg(n)) f(T^{\Omega(n)} x) - \left(\int_{\mathbb{T}_{\mathrm{ang}}} \psi \, dm_{\mathrm{ang}}\right) \left(\int_X f \, d\mu\right)\right|
& \longrightarrow 0.
\end{align}
Thus the full-disk angular-dynamical independence statement also holds uniformly in the initial point.

\subsection{Sectorial Equidistribution in Finite Groups}
The next corollary gives an arithmetic criterion for equidistribution of finite-group-valued completely multiplicative functions in logarithmically shrinking sectors.

\begin{corollary}\label{implication3}
Let $G$ be a finite abelian group, written multiplicatively, and let $\chi : \Z[i] \setminus \{0\} \rightarrow G$ be completely multiplicative. For each $g \in G$, set $P_g := \{p \in \mathcal{P}_{\Z[i]} : \chi(p) = g\}$, and suppose that the elements $g \in G$ satisfying $\sum_{p \in P_g} \mathcal{N}(p)^{-1} = \infty$ generate $G$.
Set $d := \#\{\chi(p) : p \in \mathcal{P}_{\Z[i]}\}$, and let $A_d > 0$ be the constant supplied by Theorem \ref{thm:multisec} for this value of $d$. Then
\begin{align}
\sup_{\substack{(\log N)^{-A_d} \leq \gamma \leq 2\pi \\ s \in [0, 2\pi)}} \max_{a \in G} \left|\frac{\big|K_{s,\gamma}(N) \cap \chi^{-1}(\{a\})\big|}{|K_{s,\gamma}(N)|} - \frac{1}{|G|}\right|
& \longrightarrow 0,
\quad \text{ as } N\to \infty .
\end{align}
\end{corollary}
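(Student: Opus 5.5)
We realize the statement as an instance of Theorem \ref{thm:multisec}. Take $Y := G$ with the discrete topology, a compact metric space, and define the multiplicative action $S_n(y) := \chi(n) y$. Complete multiplicativity of $\chi$ and commutativity of $G$ give $S_1 = \operatorname{id}$ and $S_{nm} = S_n \circ S_m$. The prime transformations are the translations by the elements of $\{\chi(p) : p \in \mathcal{P}_{\Z[i]}\}$. There are exactly $d$ of them, since distinct group elements give distinct translations. So $(Y, S)$ is finitely generated with at most $d$ generators, which is the hypothesis of Theorem \ref{thm:multisec}.

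The main step is strong unique ergodicity, with the invariant measure equal to normalized counting measure $m_G$. By Lemma \ref{lem:pretendedInvariance}, a probability measure $\nu$ on $G$ pretends to be invariant if and only if it is invariant under translation by every $g$ with $\sum_{p \in P_g} \mathcal{N}(p)^{-1} = \infty$. Each such translation preserves $m_G$, so $m_G$ pretends to be invariant. Conversely, let $\nu$ be invariant under these translations. The set $H := \{h \in G : \nu(hy) = \nu(y) \text{ for all } y\}$ is a subgroup, because $G$ is finite. By assumption $H$ contains a generating set of $G$, so $H = G$. Hence $\nu$ is translation invariant and equals $m_G$. The only care needed here is this generation argument; note that the theorem requires no ergodicity beyond this.

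Now apply Theorem \ref{thm:multisec} with $y := e$ the identity element and $g := \one_{\{a\}}$, which is continuous on the discrete space $G$. This gives
\begin{align}
\E_{n \in K_{s,\gamma}(N)} \one_{\{a\}}(\chi(n)) \longrightarrow \frac{1}{|G|},
\end{align}
uniformly over $(\log N)^{-A_d} \leq \gamma \leq 2\pi$ and $s \in [0, 2\pi)$. The left side is exactly the proportion in the statement. Since $G$ is finite, taking the maximum over the finitely many $a \in G$ preserves convergence and proves the corollary.

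One could instead use Theorem \ref{thm:sector-disk} for the uniformity together with Theorem \ref{thm:fixedmultiplicativesec} for the disk limit. That route gives the same conclusion, and Theorem \ref{thm:multisec} already packages both.
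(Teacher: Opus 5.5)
Your proposal is correct and follows the paper's proof essentially step for step: the translation action $S_n(y) = \chi(n)y$ on the discrete group $G$, strong unique ergodicity from Lemma \ref{lem:pretendedInvariance} together with the generation hypothesis, and then Theorem \ref{thm:multisec} at the identity with singleton indicators, maximized over the finitely many $a \in G$. Your subgroup argument for why invariance under a generating set forces $\nu = m_G$ spells out a step the paper states in one line.
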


\begin{proof}
Equip $G$ with the discrete topology and consider the multiplicative action $S_n(y) := \chi(n) y$. This action has exactly $d$ distinct prime transformations, since distinct values of $\chi$ on primes induce distinct translations of $G$. By Lemma \ref{lem:pretendedInvariance}, any probability measure pretending to be invariant must be invariant under translation by every $g \in G$ for which $\sum_{p \in P_g} \mathcal{N}(p)^{-1} = \infty$. These elements generate $G$, so the unique such measure is the uniform measure $m_G$. Thus $(G,S)$ is strongly uniquely ergodic.

Apply Theorem \ref{thm:multisec} with this value of $d$, the identity of $G$ as initial point, and the singleton indicators, each of which has $m_G$-integral $1/|G|$. Taking the maximum over the finitely many elements of $G$ proves the result.
\end{proof}

The finite-group construction also gives joint equidistribution of prime-factor counts associated with a partition of the Gaussian primes into two classes. No angular-distribution hypothesis on either class is required.

\begin{corollary}\label{implication4}
Let $\mathcal{P}_{\Z[i]} = P_1 \sqcup P_2$ be a partition into two sets invariant under multiplication by units, and suppose that
\begin{align}
\sum_{p \in P_j} \frac{1}{\mathcal{N}(p)}
& = \infty, \qquad j \in \{1, 2\}.
\end{align}
For $j \in \{1, 2\}$, let $\Omega_j(n)$ denote the number of prime factors of $n$ belonging to $P_j$, counted with multiplicity, with $\Omega_j(u) := 0$ for every unit $u$. Let $A_2$ be as in Theorem \ref{thm:sector-disk}. Then, for every fixed integer $q \geq 2$, and as $N \rightarrow \infty$
\begin{align}
\sup_{\substack{(\log N)^{-A_2} \leq \gamma \leq 2\pi 
    \\ s \in [0, 2\pi)}} 
\max_{\substack{a,b\in\mathbb N_0 \\ a,b<q}} 
\left|\frac{\big|\{n \in K_{s,\gamma}(N) : (\Omega_1(n), \Omega_2(n))
\equiv (a, b) \pmod q\}\big|}{|K_{s,\gamma}(N)|} - \frac{1}{q^2}\right|
& \longrightarrow 0.
\end{align}
\end{corollary}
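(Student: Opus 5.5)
The plan is to realize the pair $(\Omega_1(n) \bmod q, \Omega_2(n) \bmod q)$ as the value of a completely multiplicative function into the finite abelian group $G := (\Z/q\Z)^2$, and then apply Corollary \ref{implication3} or, equivalently, Theorems \ref{thm:sector-disk} and \ref{thm:multisec} directly, with $d = 2$.

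\emph{Step 1: build the multiplicative function.} Define $\chi : \Z[i]\setminus\{0\} \to G$ by $\chi(n) := (\Omega_1(n) \bmod q, \Omega_2(n) \bmod q)$, written multiplicatively. Since $P_1$ and $P_2$ are unit-invariant, $\Omega_1$ and $\Omega_2$ are well defined and completely additive, and they vanish on units. Hence $\chi$ is completely multiplicative.

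\emph{Step 2: check the hypotheses.} On primes, $\chi$ takes exactly the two values $e_1 = (1,0)$ on $P_1$ and $e_2 = (0,1)$ on $P_2$. Both sets have divergent reciprocal-norm sums, and $e_1, e_2$ generate $G$. So the hypotheses of Corollary \ref{implication3} hold with $d = \#\{\chi(p)\} = 2$. Equivalently, consider the action $S_n(y) = \chi(n)y$ on $G$ with the discrete topology. It has exactly two prime transformations, namely translation by $e_1$ and translation by $e_2$. By Lemma \ref{lem:pretendedInvariance}, any measure that pretends to be invariant must be invariant under both translations, and is therefore the Haar measure $m_G$. Thus the system is strongly uniquely ergodic.

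\emph{Step 3: apply the theorem.} Apply Theorem \ref{thm:multisec} with $d = 2$, initial point the identity of $G$, and $g = \one_{\{(a,b)\}}$, which is continuous on the discrete space $G$ and has $m_G$-integral $1/q^2$. This gives
\[
\E_{n \in K_{s,\gamma}(N)} \one_{\chi(n) = (a,b)} \longrightarrow \frac{1}{q^2}
\]
uniformly over $(\log N)^{-A_2} \le \gamma \le 2\pi$ and $s \in [0,2\pi)$. Taking the maximum over the $q^2$ residue pairs preserves the convergence. The condition $\chi(n) = (a,b)$ is exactly $(\Omega_1(n), \Omega_2(n)) \equiv (a,b) \pmod q$.

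There is essentially no obstacle here. The only point needing care is that the constant $A_2$ from Theorem \ref{thm:sector-disk} matches the count $d = 2$ of distinct prime transformations. It is also worth noting that the uniformity in $s$ and $\gamma$ comes from Theorem \ref{thm:multisec}, which is uniform in the sector but pointwise in $y$; this suffices because $y$ is fixed here.
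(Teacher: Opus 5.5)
Your proposal is correct and follows essentially the same route as the paper: the paper also uses the translation action $S_n(x_1,x_2) = (x_1+\Omega_1(n),\, x_2+\Omega_2(n))$ on the discrete group $(\Z/q\Z)^2$, invokes Lemma \ref{lem:pretendedInvariance} to get strong unique ergodicity, and applies Theorem \ref{thm:multisec} with $d=2$, initial point $(0,0)$, and singleton indicators. For strong unique ergodicity you also need existence, namely that the uniform measure does pretend to be invariant; you left this implicit, whereas the paper states it (it is immediate, since that measure is invariant under every translation). Both arguments then finish by taking the maximum over the $q^2$ residue pairs.
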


\begin{proof}
Invariance of $P_1$ and $P_2$ under multiplication by units makes the counts $\Omega_1$ and $\Omega_2$ well-defined. Unique factorization shows that both counts are completely additive. Equip $G_q := (\Z/q\Z)^2$ with the discrete topology, and define a multiplicative action on $G_q$ by
\begin{align}
S_n(x_1, x_2)
& := \big(x_1 + \Omega_1(n), x_2 + \Omega_2(n)\big),
\end{align}
where both coordinates are taken modulo $q$.

For $p \in P_1$, the transformation $S_p$ is translation by $(1, 0)$, while for $p \in P_2$ it is translation by $(0, 1)$. Thus the action has exactly two distinct prime transformations. By Lemma \ref{lem:pretendedInvariance}, divergence of the reciprocal-norm sums over both $P_1$ and $P_2$ implies that every probability measure that pretends to be invariant is invariant under both coordinate translations. These generate all translations of $G_q$, so such a measure must be uniform. Conversely, the uniform probability measure is invariant under every translation. Hence this system is strongly uniquely ergodic.

Apply Theorem \ref{thm:multisec} with $d = 2$, initial point $(0, 0)$, and the indicators of the singleton subsets of $G_q$. Each indicator has integral $1/q^2$ against the uniform measure. Since $q$ is fixed, taking the maximum over these finitely many indicators proves the desired result.
\end{proof}

\section{Concluding Remarks}
\noindent\textbf{Polynomially shrinking sectors}\\
Theorem \ref{thm:additivesec} allows every subpolynomial shrinking sequence, but does not reach widths $N^{-\delta}$ for any fixed $\delta > 0$. A natural next step is to obtain convergence in some polynomial shrinking range. More ambitiously, Lemma \ref{lem:countsec} gives a relative lattice-counting error of $O(N^{-\varepsilon})$ uniformly over sectors of width at least $N^{-1/2 + \varepsilon}$, suggesting the following question.

\begin{question}\label{qu:polynomialSectors}
Let $X$ be a compact metric space, let $T : X \rightarrow X$ be continuous, and suppose that $\mu$ is the unique $T$-invariant Borel probability measure on $X$. Is it true that, for every fixed $\varepsilon \in (0, 1/2)$ and every $f \in C(X)$,
\begin{align}
\sup_{\substack{N^{-1/2 + \varepsilon} \leq \gamma \leq 2\pi \\ s \in [0, 2\pi) \\ x \in X}} \left|\E_{n \in K_{s,\gamma}(N)} f(T^{\Omega(n)} x) - \int_X f \, d\mu\right|
& \longrightarrow 0,
\qquad \text{ as } N\to\infty?
\end{align}
\end{question}

At the critical scale $N^{-1/2}$, sectors can isolate a single lattice row, bringing one-variable polynomial arithmetic into the problem. For an integer $R \geq 2$, set $N_R := R^2 + 1$, $s_R := \arctan(1/(2R))$, and $\beta_R := \arctan(2/R) - s_R$. Then
\begin{align}
K_{s_R,\beta_R}(N_R)
& = \{m + i : m \in \Z,\ R/2
< m
\leq R\}, \qquad \beta_R
\sim \frac{3}{2R}.
\label{eq:critical-sector-row}
\end{align}
Indeed, a point $m + bi$ in this sector has positive integer coordinates. The norm bound gives $m \leq R$, while the upper angular bound gives $m > Rb/2$, forcing $b = 1$. The remaining conditions give precisely the interval in \eqref{eq:critical-sector-row}.

Let $\lambda_{\Z}$ denote the ordinary Liouville function on the positive integers. Every Gaussian prime dividing $m + i$ has norm equal to a rational prime: an inert rational prime cannot divide its imaginary part. Multiplicativity of the norm therefore gives $(-1)^{\Omega(m + i)} = \lambda_{\Z}(m^2 + 1).$ 
Consequently, an extension of parity cancellation to the sectors in \eqref{eq:critical-sector-row} would imply vanishing Liouville averages along $m^2 + 1$ over $R/2 < m \leq R$, and hence, by dyadic decomposition,
\begin{align}
\frac{1}{R} \sum_{1 \leq m \leq R} \lambda_{\Z}(m^2 + 1)
& \longrightarrow 0.
\end{align}
This is the $m^2 + 1$ case of Chowla's conjecture for polynomial values of the Liouville function; see \cite{Teravainen2024}*{Section 1} for its distinction from the weaker problem of infinitely many sign changes.
This observation concerns the critical scale itself, not the wider sectors in Question \ref{qu:polynomialSectors}.
\bigskip 

\noindent\textbf{The range of the sector-disk comparison}\\
The logarithmic shrinking range in Theorem \ref{thm:sector-disk} is narrower than the subpolynomial range of Theorem \ref{thm:additivesec}. It is natural to ask whether the comparison remains valid in the latter range, retaining uniformity in the initial point and imposing no ergodicity assumption.

\begin{question}
Let $(Y, S)$ be a finitely generated multiplicative topological dynamical system, and let $(\gamma_N)_{N \geq 3}$ be a sequence of positive real numbers satisfying $\gamma_N \rightarrow 0$ and $\gamma_N^{-1} = N^{o(1)}$. Is it true that, for every $g \in C(Y)$,
\begin{align}
\sup_{\substack{\gamma_N \leq \gamma \leq 2\pi \\ s \in [0, 2\pi) \\ y \in Y}} \left|\E_{n \in K_{s,\gamma}(N)} g(S_n y) - \E_{n \in B_N} g(S_n y)\right|
& \longrightarrow 0,
\qquad \text{ as } N\to\infty?
\end{align}
\end{question}

The issue is whether assigning finitely many transformations to primes can preserve angular bias on scales where Theorem \ref{thm:additivesec} already gives convergence for the single count $\Omega$. In the present proof, normalization by the sector cardinality introduces a factor $\gamma^{-1}$ into the scalar comparison, and the available power-of-logarithm cancellation does not settle this question. A positive answer would hold without any assumption on the angular distribution of the sets of primes inducing the individual transformations. Under strong unique ergodicity, it would also extend Theorem \ref{thm:multisec} to the subpolynomial shrinking range, with convergence to the invariant integral still asserted for each fixed initial point.
\bigskip 

\noindent\textbf{Other number fields.}\\
The norm-truncated ideal averages studied by C\'{e}spedes and Donoso \cite{CD26} are defined over arbitrary number fields, where unique factorization into prime ideals is always available. Extending our angularly restricted element averages requires additional geometric choices. Outside $\mathbb{Q}$ and the imaginary quadratic fields, the unit group is infinite, so an absolute field-norm bound alone does not give finite averaging sets. One must therefore impose additional archimedean restrictions or choose representatives modulo units. Moreover, multiplication in the Minkowski embedding is no longer described by a single planar rotation and dilation.

Imaginary quadratic fields of class number one provide a natural setting for extending the additive argument: the planar geometry and unique factorization persist, and Euclideanity is not required. The shrinking-sector prime-semiprime construction would nevertheless require uniform prime counts in short annular sectors analogous to Theorem \ref{thm:stucky}. For larger class number, one may still define $\Omega(\alpha)$ through the prime-ideal factorization of $(\alpha)$, but principal ideals can have nonprincipal prime factors. An adaptation must therefore account for the class-group constraints rather than assume factorization into prime elements.

For multiplicative actions, Theorem \ref{thm:sector-disk} separates the quantitative comparison of averaging regions from the identification of the limiting integral. Extending the comparison would require an analogue of Proposition \ref{prop:angularcancellation} for the chosen geometry, together with an appropriate replacement for the prime-factor data used in its proof. The ergodic conclusion would additionally require a convergence theorem for the unrestricted averages, playing the role of \cite{DLMS2024}*{Theorem E}. We do not pursue these extensions here.

\medskip
\textbf{AI Disclosure}: In preparing this manuscript, the authors used ChatGPT-Sol Pro to assist with language editing, syntax, and proofreading of author-written drafts, and to identify potentially relevant literature. The mathematical arguments and results are the authors' own. All AI-generated suggestions were reviewed by the authors, and all literature references identified with AI assistance were independently verified. The authors take full responsibility for the content and accuracy of the manuscript.

\end{document}